\numberwithin{equation}{section}
\numberwithin{equation}{section}
\newcommand{\taut}{\text{\texttau}}
\newtheorem{theorem}{Theorem}[section]
\newtheorem{proposition}[theorem]{Proposition}
\newtheorem{lemma}[theorem]{Lemma}
\newtheorem{corollary}[theorem]{Corollary}
\newtheorem{remark}[theorem]{Remark}
\newcommand{\trp}{\mathtt t^+}
\newcommand{\trm}{\mathtt t^-}
\newcommand{\trpm}{\mathtt t^\pm}
\begin{document}
\title[Asymptotic behavior of Carleman weight functions]{Asymptotic behavior of Carleman weight functions and application to controllability}
\author{Ariel A. P\'erez}
\address{Departmento de Matem\'atica, Universidad del B\'io-B\'io, Avda. Collao 1202, Concepci\'on, Chile}
\email{aaperez@ubiobio.cl}
\subjclass[2020]{93B05,35R30,93B07,35B45}
\keywords{Carleman estimate, Control Theory, Inverse Problems, Asymptotic behavior}
\begin{abstract}
 In the development of controllability and inverse problem results for semi-discrete systems, by using Carleman estimates, it is required to estimate of the discrete operators applied to Carleman weight functions. This work aims to establish the asymptotic behavior of Carleman weight functions under these discrete operators. We provide a characterization of the error term in arbitrary order and dimension, extending previously known results. This generalization is of independent interest due to its applications in deriving Carleman estimates for semi-discrete stochastic operators. The aforementioned estimates hold for Carleman weight functions used for parabolic, hyperbolic, and elliptic operators, which are applied to obtain control and inverse problems results for those operators. We apply these results to obtain $\phi$-controllability result for a fully discrete parabolic operator, which is based in a Carleman estimate for a fully-discrete parabolic operator. 
\end{abstract}
\maketitle
\section{Introduction}
 Let $d\geq 1$ and $T,L_{1}$, $\ldots$,  $L_{d}$ be positive real numbers; and $\Omega:=\Pi_{i=1}^{d}(0,L_{i})$, with $\omega\Subset \Omega$. Consider the following parabolic problem
\begin{equation}\label{system:control}
\begin{cases}
\partial_{t} y - \sum_{i=1}^{d}\partial_{x_{i}}( \gamma_{i}\partial_{x_{i}}y)  = \mathbbm{1}_\omega v, & (x,t) \in \Omega \times (0,T),\\
y(x,t) = 0, & (x,t) \in \partial\Omega \times (0,T),\\
y(x,0) = g(x), & x \in \Omega,
\end{cases}
\end{equation}
here $\gamma_{i}(x)>0$ for all $x\in\Omega$, and $\mathbbm{1}_\omega$ stands for the indicator function of the set $\omega$. The goal is to find $v\in L^{2}((0,T)\times\Omega)$ such that $y(T)=0$. In this case, it is said that the system \eqref{system:control} is null-controllable. The aforementioned controllability problem was solved independently by Lebeau and Robbiano \cite{lebeau1995controle} and Fursikov and Imanuvilov \cite{fursikov-1996}. The usual methodology is that the null-controllability result for the system \eqref{system:control} is equivalent, by a duality argument, to the following observability inequality
\begin{equation}\label{adjoint:continuous}
    |q(0)|^{2}_{L^{2}(\Omega)}\leq C_{\text{obs}}^{2}\left\| q\right\|^{2}_{L^{2}(\omega\times(0,T))},
\end{equation}
where $q$ is a solution to the adjoint system
\begin{equation}\label{system:control:adjoint}
\begin{cases}
\partial_{t} q + \sum_{i=1}^{d}\partial_{x_{i}}( \gamma_{i}\partial_{x_{i}}q)  = 0, & (x,t) \in \Omega \times (0,T),\\
q(x,t) = 0, & (x,t) \in \partial\Omega \times (0,T),\\
q(x,T)=q_{T}(x),&x\in \Omega.
\end{cases}
\end{equation}
A popular strategy to prove the inequality \eqref{adjoint:continuous} is based on a Carleman estimate for the system \eqref{system:control:adjoint}. The first application of these estimates was to prove unique continuation properties (see \cite{carleman1939}). Later, for control and inverse problems Carleman estimates have become a fundamental tool. A unified approach to Carleman estimates for second-order partial differential equations is the reference \cite{fu-2019} and references therein.
\par Unfortunately, it is not feasible to achieve the null-controllability for the discretization of system \eqref{system:control}. The next example shows that the null controllability generally does not hold for fully-discrete systems by building a non-observable function for a discretization of the adjoint system \eqref{system:control:adjoint}. To this end, let $M,N\in \mathbb{N}$, $T>0$, and define $h=1/(M+1)$ and $\Delta t=T/N$. Then, the uniform meshes in space and time are given by follows
\begin{align*}
    \mathcal{K}=\{x_i:=ih\,;\, i\in \{ 0,\ldots,M+1 \}\},\quad
    \mathcal{N}=\{t_j:=j\Delta t\, ;\, j\in \{ 0,\ldots,N-1 \}\}. 
\end{align*}
Thus, we consider a regular partition of the square $(0,1)^{2}$ defined by $(0,1)^{2}\cap\mathcal{M}^{2}$ where $\mathcal{M}:=(0,1)\cap\mathcal{K}$. Using the finite-difference scheme, the adjoint discrete system \eqref{system:control:adjoint} of the heat equation, for $d=2$, $\gamma_{i}(x)=1$ and $L_{i}=1
$, can be written as
\begin{equation}\label{sys:discrete:adjoint}
\begin{split}
    D_{t}q_{i,j}+\Delta_{h}\trp q_{i,j}&=0 \quad \forall(x_{i},x_{j},t_{j})\in\mathcal{M}^{2}\times\mathcal{N},\\
    q_{i,0}=q_{i,M+1}&=0\  \ \ \forall (x_{i},t_{j})\in\mathcal{M}\times \mathcal{N},\\
    q_{0,j}=q_{M+1,j}&=0 \ \ \ \forall (x_{j},t_{j})\in \mathcal{M}\times\mathcal{N},\\
    \end{split}
\end{equation}
where the five-point approximation of the Laplacian operator in uniform meshes is utilized, as given by
\begin{equation}
    \Delta_{h}q_{i,j}:=\frac{1}{h^{2}}\left(q_{i+1,j}+q_{i,j+1}+q_{i-1,j}+q_{i,j-1}-4q_{i,j}\right),
\end{equation}
and $D_{t} q:=\dfrac{\trm q-\trm q}{\Delta t}$ in $\mathcal{N}$, where $\trpm q(t):=q(t\pm \Delta t/2)$ and $q_{ij}=q(x_{i},x_{j})$. Combining the ideas of \cite{zuazua:2005} and \cite{ervedoza:2019}, we can build a non-observable solution of the system \eqref{sys:discrete:adjoint}. 
Indeed, notice that for
\begin{equation}
    \tilde{q}_{i,j}:=\begin{cases}
    \ \ 1,& i=j \text{ even,}\\
    \ \ 0,& i\ne j,\\
    -1,& i=j\text{ odd},\\
    \end{cases}
\end{equation}
it follows that 
\begin{equation}
    \Delta_{h}\tilde{q}_{i,j}=\begin{cases}
    -\frac{4}{h^{2}},& i=j \text{ even,}\\
    \ \ \  0,& i\ne j, \\
    \ \ \frac{4}{h^{2}},& i=j\text{ odd,}.\\
    \end{cases}=-\frac{4}{h^{2}}\tilde{q}_{i,j}.
\end{equation}
Then $\tilde{q}_{i,j}$ is an eigenfunction of $\Delta_{h}$, with eigenvalue $-4/h^{2}$. In turn, for $\alpha \Delta t<1$ and with 
$\displaystyle \mathfrak{a}:=\left( \frac{1}{1-\alpha\Delta t}\right)^{1/\alpha\Delta t}$,  it obtained that $D_{t}\mathfrak{a}^{\alpha t}=\alpha\trp \mathfrak{a}^{\alpha t}$. Thus, 
considering $\alpha=4/h^{2}$ such that $\alpha \Delta t<1$ it follows that $\overline{q}_{i,j}:=\mathfrak{a}^{-4(T-t)/h^{2}}\tilde{q}_{i,j}$ solves \eqref{sys:discrete:adjoint} since $D_{t}\overline{q}=\frac{4}{h^{2}}\trp\overline{q}_{i,j}$. Therefore, if any diagonal nodes do not belong to $\omega$ yields $\left\| \overline{q}_{i,j}\right\|_{L^{2}_{h}(\omega\times (0,T))}=0$ although the norm of $\overline{q}(x,0)$ has size $\mathfrak{a}^{-C/h^{2}}$. This implies that a discrete observability inequality like the continuous setting \eqref{adjoint:continuous}, cannot be achieved for a fully-discrete system \eqref{sys:discrete:adjoint}.
\par Due to the aforementioned counter-example, achieving null controllability in the fully or semi-discrete setting is not possible in general, even though an alternative objective is to build bounded controls such that at a given time the norm of the solution of the system decreases exponentially, as the discrete parameters $\Delta t$ and/or $h$ tend towards zero. That is, in the case of space semi-discrete system, to build semi-discrete control $u$ such that at time $T$ the solution of the system $y$ (with an appropriate norm on the space of solutions $X$) verifies
\begin{equation}
    \left\| y(x,T)\right\|_{X} \approx C\sqrt{\phi(h)},
\end{equation}
with $C>0$, $\left\| u\right\|_{Y}\leq C$ uniformly respect to $h$ (with the appropriate norm in the space of controls $Y$), and
where $h\longrightarrow \phi(h)$ is any given function of the discretization parameter such that
\begin{equation}\label{eq:phi}
    \liminf_{h\rightarrow 0}\frac{\phi(h)}{e^{-c/h}}>0.
\end{equation}
This approach is known as $\phi$-null controllability for the space semi-discrete system, and also can be formulated for time-discrete systems and/or fully-discrete systems.
 
\par In several setting, the $\phi$-null controllability has been studied and the analysis is based on the penalization Hilbert uniqueness method, we refer to the work \cite{boyer-canum} for an overview of the method for space semi-discrete parabolic setting. In particular, the works \cite{boyer-2014,boyer-2010-1d-elliptic,BHLR:2010} establish $\phi$-null controllability results for semi-discrete parabolic operators in the spatial semi-discrete setting. For discontinuous diffusion coefficient we refer to \cite{Thuy} and to \cite{Thuy2} for the setting of space semi-discrete parabolic operator in Banach space. Also in the space-discrete setting in \cite{BHSDT:2019}, the authors examined insensitizing controls for a semilinear parabolic equation in one dimension. For higher order operator we refer to \cite{CLTP-2022} and \cite{kumar:hal-04828698}, where the boundary $\phi$-null controllability is studied. Likewise, in  \cite{BHS:2020} is presented the $\varphi(\Delta t)$-null controllability within the time-discrete framework for the time-discrete heat equation and also for some parabolic systems. The same kind of controllability for a simplified stabilized Kuramoto-Sivashinsky system was also addressed in \cite{HS:2023} and recently for parabolic system with Kirchhoff boundary condition in \cite{bhandari:hal-04597449}. As for fully-discrete systems, an important contribution was made in \cite{GC-HS-2021} for null Dirichlet boundary conditions and in \cite{LMPZ:2023} for dynamic boundary conditions both of them in the one-dimensional setting.

\par In all the aforementioned works, their results follow from suitable Carleman estimates, and they need an estimate of the discrete difference and average operator applied on the Carleman weight functions. It is worth noticing that, in arbitrary dimension, there are no results for general computation of the discrete operators over the continuous Carleman weight functions; see \cite{LOP-2020} for the one-dimensional case. Hence, in this work, we aim to cover that gap by giving general results about the asymptotic behavior of the Carleman weight functions; see Proposition \ref{prop:weight}, Theorems \ref{theo:weight:estimates}, \ref{theo:time:derivate} and \ref{theo:fully:weight:estimates}. We emphasize that our introduction is focusing on controllability issues but our scope also reaches semi-discrete inverse problem and unique continuation properties for hyperbolic and elliptic operators respectively; see Section \ref{sec:concluding} for some remarks in these directions. \\
\par Almost all the fully-discrete $\phi(h)$-controllability results are in 1-D setting. The main difficulty to extend these results into higher-dimension is that there is no general asymptotic behavior of Carleman weight function. Then, adopting the same notation from \cite{LDOP-2021,LPP:2026,LPP:2026b,LLRP:2025} for the sets $\mathcal{M}$, $\mathcal{M}_{i}^{\ast}$, $\partial_{i}\mathcal{M}$, $\mathcal{N}$, the operators $D_{i}$, $A_{i}$, $t_{r}^{i}$, we show how to use Theorems \ref{theo:weight:estimates}, \ref{theo:time:derivate} and \ref{theo:fully:weight:estimates} to obtain a fully discrete Carleman estimate which implies $\phi-$controllability result for the following system
\begin{equation}\label{eq:controlled}
\begin{cases}
D_{t}y-\sum_{i=1}^{d}D_{i}(\gamma_{i}D_{i}\trp y)+\trp(a\,y)=1_{\omega}v,&\forall(x,t)\in\mathcal{M}\times\mathcal{N}^{\ast},\\
y\big|_{\partial\mathcal{M}}=0,&\forall t\in\mathcal{N}^{\ast},\\
y(x,0)=y_{0}(x),&\forall x\in\mathcal{M},
\end{cases}
\end{equation}
where  $y\in C(\overline{\mathcal{M}}\times\overline{\mathcal{N}})$, $y_{0}\in C(\mathcal{M})$ is the initial datum, $v\in C(\omega\times\mathcal{N}^{\ast})$ is the control function acting on a discrete subset $\omega\subset\mathcal{M}$, $a=a(x,t)$ is a bounded potential, and $\Gamma(x):= \mathrm{Diag}(\gamma_{1}(x),\gamma_{2}(x),\ldots, \gamma_{d}(x))$, $\gamma_i>0$ for all $i=1,\ldots,d$ and 
$$
 \mbox{reg}(\gamma):= \text{ess}\sup_{\substack{x\in \mathcal{M} \\ i=1,\ldots,d}}\left(\gamma_i+\frac{1}{\gamma_i}+{ \sum_{j=1}^{d}|\partial_{i}\gamma_i|^2}\right)<c_0.
$$
for some constant $c_0> 0$. Indeed, following the penalized HUM strategy, see \cite{boyer-canum}, we have the $\phi$-controllability result for the system \eqref{eq:controlled}
\begin{theorem}\label{thm:phi:controllability}
Let $T>0$, $\mu\geq 1$, and $h$, $\Delta t$ as in Theorem~\ref{thm:observability}. Then, for any initial datum $y_{0}\in C(\mathcal{M})$, there exists a control function $v$ satisfying
\begin{equation}\label{eq:control:bound}
\|v\|_{L^{2}_{h}(\omega\times\mathcal{N}^{\ast})}\leq\sqrt{C_{\text{obs}}}\,\|y_{0}\|_{L^{2}_{h}(\mathcal{M})},
\end{equation}
and the solution of the controlled system \eqref{eq:controlled} satisfies
\begin{equation}\label{eq:phi:null}
\|y(T)\|_{L^{2}_{h}(\mathcal{M})}\leq\sqrt{C_{\text{obs}}}\,\sqrt{\phi(h)}\,\|y_{0}\|_{L^{2}_{h}(\mathcal{M})},\qquad\phi(h)=e^{-C_{1}/h^{\min\{\mu/4,1\}}},
\end{equation}
provided $h\leq\min\{h_{0},h_{1}\}$ and $\Delta t\leq\min\{T^{-2}h^{\mu},(4\|a\|_{L^{\infty}_{h}})^{-1}\}$, \\with $C_{\mathrm{obs}}=e^{C(1+1/T+\|a\|_{L^{\infty}_{h}}^{2/3}+T\|a\|_{L^{\infty}_{h}})}$.
\end{theorem}
In \cite[section 4]{LMPZ:2023} is the strategy to follow to prove Theorem \ref{thm:phi:controllability}. Moreover, Theorem \ref{thm:phi:controllability} can be extended to more general functions $\phi$ satisfying
\begin{equation*}
\liminf_{h\to 0}\phi(h)\,e^{C_{1}/h^{\min\{\mu/4,1\}}}>0.
\end{equation*}
Indeed, for such $\phi$, there exists $h^{\ast}>0$ such that $e^{-C_{1}/h^{m_{\mu}}}\leq\phi(h)$ for all $h\leq h^{\ast}$. Then, Theorem~\ref{thm:observability} holds with $\phi(h)$ replacing $e^{-C_{1}/h^{m_{\mu}}}$, and the same penalised HUM argument applies. Here, Theorem \ref{thm:observability} is the observability inequality of  the corresponding adjoint system is given by
\begin{equation}\label{eq:adjoint}
\begin{cases}
D_{t}q+\sum_{i=1}^{d}D_{i}(\gamma_{i}D_{i}\trm q)+\trm(a\,q)=0,&\forall(x,t)\in\mathcal{M}\times\mathcal{N},\\
q\big|_{\partial\mathcal{M}}=0,&\forall t\in\mathcal{N},\\
q(x,T+\Delta t/2)=q_{T}(x),&\forall x\in\mathcal{M}.
\end{cases}
\end{equation}
 for $q\in C(\overline{\mathcal{M}}\times\overline{\mathcal{N}}^{\ast})$.\\

\par The organization of this work is as follows. In section \ref{preliminary} we provided some results concerning the average and difference operators and the asymptotic behavior of smooth functions. In section \ref{sec:asymp:weight} we apply the results from the previous section into the Carleman weight function. In section \ref{sec:Carleman} we prove the Carleman estimate for the system \ref{eq:adjoint} although several computation are in section \ref{sec:proof:carleman}. Section \ref{sec:concluding} provides concluding remarks about the potential applicability of the results presented in section \ref{sec:asymp:weight} for deterministic and stochastic operators.
\section{Asymptotic behavior of smooth functions}\label{preliminary}
In this section, we aim to establish the asymptotic behavior of a smooth function which will be applied in particular in the next section to Carleman weight functions. To this end, we consider functions continuously defined in a neighborhood of $\overline{\Omega}$. We define the average and difference operators, in the direction $e_{i}$, with $\{e_{i}\}_{i=1}^{d}$ being
the usual basis of $\mathbb{R}^{d}$, as 
\begin{equation}
\begin{split}
    A_{i}u(x)&:=\frac{1}{2}\left( u(x+\frac{h}{2}e_{i})+u(x-\frac{h}{2}e_{i})\right),\\
    D_{i}u(x)&:=\frac{1}{h}\left(u(x+\frac{h}{2}e_{i})-u(x-\frac{h}{2}e_{i}) \right),
    \end{split}
\end{equation}
  For these operators, we have the following product rule.
\begin{lemma}[{\cite[Lemma 2.1 and 2.2]{BHLR:2010}}] \label{base:case}
    Let the functions $u$ and $v$ be continuously defined in a neighborhood of $\overline{\Omega}$. For $i\in\{1,\ldots,d\}$, we have the difference and average operators satisfy
    \begin{align}
        D_{i}(uv)=&D_{i}u\,A_{i}v+D_{i}v\,A_{i}u,\label{D:product}\\
        A_{i}(uv)=&A_{i}u\,A_{i}v+\frac{h^{2}}{4}D_{i}u\,D_{i}v.\label{A:product}
    \end{align}
\end{lemma}
It is possible to generalize the above product rule for the difference and average operators.
\begin{proposition}
Let $u$ and $v$ be two smooth functions and $n\in\mathbb{N}$. Then, it holds a Leibniz rule for the difference operator given by
\begin{equation}\label{eq:discreteleibniz}
    D_{i}^{n}(u\,v)=\sum_{k=0}^{n}\binom{n}{k}D_{i}^{n-k}A_{i}^{k}u\,A^{n-k}_{i}D^{k}_{i}v.
\end{equation}
Moreover, for the average operator, we have
\begin{equation}\label{eq:discrete:leibniz:average}
    A_{i}^{2m}(uv)=\sum_{j=0}^{m}\frac{h^{2j}}{2^{2j}}D_{i}^{2j}(uv),\quad\forall m\in\mathbb{N}.
\end{equation}
\end{proposition}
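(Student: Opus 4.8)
The plan is to prove both identities by induction on the order of the operator, using the base-case product rules \eqref{D:product} and \eqref{A:product} supplied by the preceding lemma, together with the algebraic fact that the operators $A_i$ and $D_i$ commute with one another. This commutativity is the structural backbone of the argument: since $\tau_i$ and $\tau_{-i}$ are translations in the same direction, they commute, and hence any polynomial expression in $A_i$ and $D_i$ may be freely reordered. I would record this observation explicitly at the outset, as it is what allows the mixed operators $D_i^{n-k}A_i^k$ to be manipulated like commuting scalars.

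For the Leibniz rule \eqref{eq:discreteleibniz}, I would set up the induction on $n$. The base case $n=1$ is exactly \eqref{D:product}. For the inductive step, I would apply $D_i$ to the assumed expansion for $D_i^n(uv)$ and push it through each summand using the first-order product rule. The key point is that $D_i$ acting on a product $D_i^{n-k}A_i^k u \cdot A_i^{n-k}D_i^k v$ produces, via \eqref{D:product}, two terms involving one extra $D_i$ and one extra $A_i$ distributed between the two factors; after invoking commutativity to collect the operators into the canonical form $D_i^{n+1-\ell}A_i^\ell$, the resulting coefficients combine through the Pascal identity $\binom{n}{k-1}+\binom{n}{k}=\binom{n+1}{k}$. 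This is the standard mechanism by which the classical Leibniz rule is proved, transplanted to the discrete calculus, and I expect it to go through cleanly once the bookkeeping of indices is done carefully.

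The identity \eqref{eq:discrete:leibniz:average} for the average operator I would approach somewhat differently, since it expresses $A_i^{2m}$ applied to a product purely in terms of powers of $D_i$ applied to that same product, with no splitting of $u$ and $v$. The natural route is to first establish an operator identity relating $A_i^2$ to $D_i^2$. Indeed, from the definitions one computes directly that $A_i^2 = \mathrm{Id} + \tfrac{h^2}{4}D_i^2$ as operators, because $A_i^2 u$ and $D_i^2 u$ both expand in terms of the double translations $\tau_i^2 u$, $u$, and $\tau_{-i}^2 u$, and the two combinations differ exactly by the identity. Granting this, \eqref{eq:discrete:leibniz:average} becomes the assertion that $\bigl(\mathrm{Id}+\tfrac{h^2}{4}D_i^2\bigr)^m = \sum_{j=0}^m \tfrac{h^{2j}}{2^{2j}}D_i^{2j}$, which is nothing but the binomial theorem applied to the commuting operators $\mathrm{Id}$ and $\tfrac{h^2}{4}D_i^2$, with $\binom{m}{j}$ absorbed — wait, here I should be careful, as the stated coefficients are $h^{2j}/2^{2j}$ without a binomial factor, so I would verify whether the intended identity carries an implicit $\binom{m}{j}$ or whether $A_i^{2m}$ is meant in an iterated rather than a power sense.

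The main obstacle I anticipate is precisely this reconciliation of \eqref{eq:discrete:leibniz:average} with the clean binomial expansion: either the coefficient in the statement should read $\binom{m}{j}h^{2j}/2^{2j}$, or $A_i^{2m}$ denotes something other than the $2m$-th power of $A_i$. Once the correct reading is fixed, the proof reduces to the binomial theorem for the two commuting operators $\mathrm{Id}$ and $\tfrac{h^2}{4}D_i^2$, and no further difficulty remains. The first identity \eqref{eq:discreteleibniz} is, by contrast, entirely routine given the Pascal-triangle structure, and I do not expect any genuine obstruction there beyond careful index management.
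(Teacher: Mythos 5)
Your proposal is correct, and for the Leibniz identity \eqref{eq:discreteleibniz} it is essentially the paper's own argument (the paper's proof consists of the single sentence ``The proof follows from induction''), fleshed out properly: the base case is \eqref{D:product}, the inductive step pushes one $D_i$ through each summand via \eqref{D:product}, commutativity of $A_i$ and $D_i$ --- which, as you note, should be recorded explicitly and holds because both are polynomials in the commuting translations $\tau_{\pm i}$ --- collects the operators into the canonical form $D_i^{n+1-\ell}A_i^{\ell}$, and Pascal's identity $\binom{n}{k-1}+\binom{n}{k}=\binom{n+1}{k}$ produces the new coefficients; I verified the index bookkeeping and it closes as you predict. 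Your hesitation over \eqref{eq:discrete:leibniz:average} is well founded and is the most valuable part of your write-up: since $\tau_i\tau_{-i}=\mathrm{Id}$, direct expansion gives the operator identity $A_i^2=\mathrm{Id}+\frac{h^2}{4}D_i^2$ exactly as you computed, whence $A_i^{2m}=(A_i^2)^m=\sum_{j=0}^{m}\binom{m}{j}\frac{h^{2j}}{2^{2j}}D_i^{2j}$ by the binomial theorem for commuting operators. The binomial coefficient cannot be dropped: already for $m=2$ one has $A_i^4=\mathrm{Id}+\frac{h^2}{2}D_i^2+\frac{h^4}{16}D_i^4$, whereas the formula as printed in the proposition would give $\mathrm{Id}+\frac{h^2}{4}D_i^2+\frac{h^4}{16}D_i^4$. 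Since the paper's own Proposition \ref{prop:difference:average} uses $A_i^n f=\frac{1}{2^n}(\tau_i+\tau_{-i})^n f$, the superscript unambiguously means iterated composition, so \eqref{eq:discrete:leibniz:average} as stated is false for $m\geq 2$ and should carry the factor $\binom{m}{j}$; your corrected version is the true identity, your proof of it (reduce to the operator identity, then expand binomially) is complete and cleaner than a bare induction, and since the identity does not appear to be invoked elsewhere in the paper, the misprint does not propagate --- but it should be fixed in the statement.
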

\begin{proof}
    The proof follows by induction, with the base case given by Lemma \ref{base:case}.
\end{proof}
\begin{remark}
    Note that \eqref{eq:discrete:leibniz:average} is stated for an even power, although by using \eqref{A:product} the odd case can also be handled.
\end{remark}
Our first result states the asymptotic behavior of a smooth function when several iterations of the discrete operator are applied, which is a  higher-dimension extension of \cite[Proposition 4.1]{LOP-2020}.
\begin{proposition}\label{prop:difference:average}
Let $f$ be a smooth function defined in a neighborhood of $\overline{\Omega}$. For 
$i\in\{ 1,\ldots,d\}$ we have
\begin{equation}\label{eq:ave:diff}
    \begin{split}
        A^{n}_{i}f=&f+R_{A_{i}^{n}}(f),\quad \forall n\in \mathbb{N},\\
        D^{n}_{i}f=&\partial_{i}^{n}f
        +R_{D_{i}^{n}}(f),\quad\forall n\in\mathbb{N
        }.
    \end{split}
\end{equation}
where  $\displaystyle R_{D_{i}^{n}}(f):=h^{2}\sum_{k=0}^{n}\binom{n}{k}(-1)^{k}\left(\frac{n-2k}{2} \right)^{n+2}\int_{0}^{1}\frac{(1-\sigma)^{n+1}}{(n+1)!}\partial_{i}^{n+2}f(\cdot+\tilde{e}_{i}\sigma)d\sigma$ and \\ $\displaystyle R_{A_{i}^{n}}(f):=\frac{h^{2}}{2^{n}}\sum_{k=0}^{n}\binom{n}{k}\frac{(n-2k)^{2}}{4}\int_{0}^{1}(1-\sigma)\partial_{i}^{2}f(\cdot+\tilde{e}_{i}\sigma)d\sigma$,
with $\tilde{e}_{i}:=\frac{(n-2k)h}{2}e_{i}$ and $\sigma\in[0,1]$.
\end{proposition}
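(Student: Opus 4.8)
The plan is to reduce everything to the one-dimensional action along $e_{i}$, expand the powers $A_{i}^{n}$ and $D_{i}^{n}$ binomially into pure shifts, and then Taylor-expand each shifted evaluation, reading off the two remainders from the combinatorics of the resulting coefficients. First I would use that $\tau_{i}$ and $\tau_{-i}$ are commuting translation operators, so that from $A_{i}=\tfrac12(\tau_{i}+\tau_{-i})$ and $D_{i}=\tfrac1h(\tau_{i}-\tau_{-i})$ the binomial theorem yields
\begin{equation*}
A_{i}^{n}f(x)=\frac{1}{2^{n}}\sum_{k=0}^{n}\binom{n}{k}f\!\left(x+(n-2k)\tfrac{h}{2}e_{i}\right),\qquad D_{i}^{n}f(x)=\frac{1}{h^{n}}\sum_{k=0}^{n}\binom{n}{k}(-1)^{k} f\!\left(x+(n-2k)\tfrac{h}{2}e_{i}\right),
\end{equation*}
since $\tau_{i}^{n-k}\tau_{-i}^{k}$ translates by $(n-2k)\tfrac{h}{2}e_{i}=\tilde{e}_{i}$. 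This isolates a single scalar Taylor expansion of $f$ along the segment $x+\sigma\tilde{e}_{i}$ in each summand.

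For the average identity I would insert the first-order Taylor expansion with integral remainder,
\begin{equation*}
f(x+\tilde{e}_{i})=f(x)+(n-2k)\tfrac{h}{2}\,\partial_{i} f(x)+\Big((n-2k)\tfrac{h}{2}\Big)^{2}\int_{0}^{1}(1-\sigma)\,\partial_{i}^{2} f(x+\sigma\tilde{e}_{i})\,d\sigma,
\end{equation*}
and sum against $\binom{n}{k}$. The constant term collapses via $\sum_{k}\binom{n}{k}=2^{n}$ to give exactly $f$, while the linear term vanishes because $\sum_{k}\binom{n}{k}(n-2k)=n2^{n}-2\cdot n2^{n-1}=0$; the surviving quadratic remainder is precisely $R_{A_{i}^{n}}(f)$.

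The difference identity is the delicate part and where the main obstacle lies. Here I would Taylor-expand each summand to order $n+1$ with an integral remainder of order $n+2$; because of the prefactor $h^{-n}$, every polynomial term of order $j<n$ carries a formally singular power $h^{j-n}$, so the proof hinges on the cancellation identities
\begin{equation*}
S_{j}:=\sum_{k=0}^{n}\binom{n}{k}(-1)^{k}(n-2k)^{j}=0\ \ (0\le j\le n-1),\qquad S_{n}=n!\,2^{n},\qquad S_{n+1}=0.
\end{equation*}
I would establish these at once from the generating identity $\sum_{k=0}^{n}\binom{n}{k}(-1)^{k} e^{(n-2k)z}=(e^{z}-e^{-z})^{n}=(2\sinh z)^{n}$: comparing Taylor coefficients in $z$ shows the left side equals $\sum_{j\ge 0}\tfrac{S_{j}}{j!}z^{j}$, while $(2\sinh z)^{n}$ vanishes to order exactly $n$ with leading coefficient $2^{n}$, forcing $S_{j}=0$ for $j<n$ and $S_{n}=n!\,2^{n}$; the vanishing $S_{n+1}=0$ follows from the parity of $(2\sinh z)^{n}$, which contains only powers of $z$ congruent to $n$ modulo $2$.

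With these in hand the argument closes by bookkeeping: the terms $j<n$ drop out, the term $j=n$ produces $\tfrac{1}{h^{n}}\tfrac{(h/2)^{n}}{n!}S_{n}\,\partial_{i}^{n}f=\partial_{i}^{n}f$, the term $j=n+1$ vanishes by $S_{n+1}=0$, and the order-$(n+2)$ integral remainder simplifies—using $h^{-n}\big((n-2k)\tfrac{h}{2}\big)^{n+2}=h^{2}\big(\tfrac{n-2k}{2}\big)^{n+2}$—to exactly $R_{D_{i}^{n}}(f)$. The only genuine work is verifying the coefficient cancellations $S_{j}=0$ for $j<n$, which guarantee that the apparent negative powers of $h$ disappear; once the $\sinh$ generating function is invoked these are immediate, and everything else is elementary Taylor-with-remainder computation.
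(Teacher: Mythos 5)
Your proof is correct, and its skeleton coincides with the paper's: expand $A_{i}^{n}$ and $D_{i}^{n}$ binomially into the pure translations $\tau_{(n-2k)i}$, Taylor-expand each shifted term with integral remainder (order $2$ for the average, order $n+2$ for the difference), and cancel the lower-order polynomial terms combinatorially, which leaves exactly $R_{A_{i}^{n}}(f)$ and $R_{D_{i}^{n}}(f)$. The one genuine divergence is how the cancellations in the difference case are justified. The paper imports the Tepper identity $\sum_{k=0}^{n}(-1)^{k}\binom{n}{k}(x-k)^{r}=0$ for $0\le r<n$ and $=n!$ for $r=n$ from Egorychev's book, applied at $x=n/2$, and then disposes of the order-$(n+1)$ term by a separate induction on $n$ showing $\sum_{k=0}^{n}(-1)^{k}\binom{n}{k}\left(\tfrac{n}{2}-k\right)^{n+1}=0$. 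You obtain all three facts — $S_{j}=0$ for $j<n$, $S_{n}=n!\,2^{n}$, $S_{n+1}=0$ — in one stroke from the generating identity $\sum_{k=0}^{n}\binom{n}{k}(-1)^{k}e^{(n-2k)z}=(2\sinh z)^{n}$: vanishing to exact order $n$ with leading coefficient $2^{n}$ gives the first two, and the odd parity of $\sinh$ gives the third (indeed $S_{n+j}=0$ for every odd $j$, which the paper's induction does not reveal). Your normalization is consistent with the paper's, since $S_{j}=2^{j}\sum_{k=0}^{n}(-1)^{k}\binom{n}{k}\left(\tfrac{n}{2}-k\right)^{j}$, so $S_{n}=n!\,2^{n}$ matches Tepper's value $n!$ after dividing by $2^{n}$; the remaining bookkeeping (the factor $h^{-n}(h/2)^{n}S_{n}/n!=1$ at $j=n$, the reduction $h^{-n}\left((n-2k)h/2\right)^{n+2}=h^{2}\left((n-2k)/2\right)^{n+2}$ in the remainder, and the linear-term cancellation $\sum_{k=0}^{n}\binom{n}{k}(n-2k)=0$ in the average case) also checks out against the stated remainders. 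In short: same decomposition and Taylor structure, but your $\sinh$ generating function replaces the paper's citation-plus-induction with a self-contained and more conceptual proof of the key combinatorial lemma.
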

\begin{proof}
The proof follows a similar strategy developed in the proof of \cite[Proposition 4.1]{LOP-2020} with the Taylor expansion \begin{equation}\label{eq:taylor}
 f(x+\eta)=\sum_{j=0}^{l-1}\frac{1}{j!}f^{(j)}(x;\eta,\ldots, \eta)+\int_{0}^{1}\frac{(1-\sigma)^{l-1}}{(l-1)!}f^{(l)}(x+\sigma \eta;\eta,\ldots,\eta)d \sigma.
\end{equation} Indeed, denoting $\displaystyle\tau_{(n-2k)i}f(x):=f\left(x+\frac{(n-2k)h}{2}e_{i}\right)$, we use \eqref{eq:taylor} with $l=2$, $\displaystyle\eta=\frac{(n-2k)h}{2}e_{i}$ to obtain
\begin{equation*}
    \tau_{(n-2k)i}f=f+\frac{(n-2k)h}{2}\partial_{i}f+\frac{(n-2k)^{2}h^{2}}{4}\int_{0}^{1}(1-\sigma)\partial_{i}^{2}f(\cdot+\frac{(n-2k)h}{2}e_{i}\sigma)d\sigma.
\end{equation*}
Then, using the above identity for the average operator, we have 
\begin{align*}
    A_{i}^{n}f=&\frac{1}{2^{n}}\left(\tau_{i}+\tau_{-i} \right) ^{n}f\\
        =&\frac{1}{2^{n}}\sum_{k=0}^{n}\binom{n}{k}\tau_{(n-2k)i}f\\
        =&\frac{1}{2^{n}}\sum_{k=0}^{n}\binom{n}{k}\left(f+\frac{(n-2k)h}{2}\partial_{i}f\right)+R_{A_{i}^{n}}(f)
\end{align*}
Thus, using that $\displaystyle\sum_{k=0}^{n}\binom{n}{k}=2^{n}$ and $\displaystyle\sum_{k=0}^{n}\binom{n}{k}k=n2^{n-1}$ we conclude that
\begin{equation*}
    A_{i}^{n}f=f+R_{A_{i}^{n}}(f),
\end{equation*}
which is the first identity from \eqref{eq:ave:diff}.\\    In turn, for the second identity from \eqref{eq:ave:diff}, we proceed similarly. Using \eqref{eq:taylor}, with $l=n+2$ and $\displaystyle\eta=\frac{(n-2k)h}{2}e_{i}$, it follows that
\begin{align}\label{eq:Df}
    D^{n}_{i}f=&\frac{1}{h^{n}}\left(\tau_{i}-\tau_{-i} \right)^{n}f\notag\\
        =&\frac{1}{h^{n}}\sum_{k=0}^{n}\binom{n}{k}(-1)^{k}\tau_{(n-2k)i}f\\
        =&\frac{1}{h^{n}}\sum_{k=0}^{n}\binom{n}{k}(-1)^{k}\sum_{j=0}^{n+1}\frac{1}{j!}\left(\frac{(n-2k)h}{2} \right)^{j}\partial_{i}^{j}f+R_{D_{i}^{n}}(f).\notag
        \end{align}
Now, using from \cite[Equation 2.2]{egorychev} the Tepper identity  
$$\displaystyle\sum_{k=0}^{n}(-1)^{k}\binom{n}{k}(x-k)^{r}=\begin{cases}
    0,&\ 0\leq r<n, \\
    n!,&\ r=n, 
\end{cases}$$ 
we notice that the non-zero terms, in the latter expression of \eqref{eq:Df}, are only for $j=n$ and $j=n+1$. Moreover, we have 
  \begin{equation}\label{eq:tepper:n+1}
      \mathcal{S}:=\sum_{k=0}^{n}(-1)^{k}\binom{n}{k}\left(\frac{n}{2}-k\right)^{n+1}=0.   
  \end{equation}
  Indeed, notice that applying the substitution $k\mapsto n-k$, we obtain
  \begin{equation*}
      \mathcal{S}=\sum_{k=0}^{n}(-1)^{n-k}\binom{n}{n-k}\left(\frac{n}{2}-(n-k)\right)^{n+1}=(-1)^{n}(-1)^{n+1}\sum_{k=0}^{n}(-1)^{k}\binom{n}{k}\left(\frac{n}{2}-k\right)^{n+1}=-\mathcal{S}
  \end{equation*}
  Hence, \eqref{eq:tepper:n+1} holds.
  Thus, by using \eqref{eq:tepper:n+1} it follows that \eqref{eq:Df} becomes
        \begin{equation}
        \begin{split}
        D_{i}^{n}f=&\frac{1}{h^{n}}\sum_{k=0}^{n}\binom{n}{k}(-1)^{k}\sum_{j=0}^{n+1}\frac{1}{j!}\left(\frac{(n-2k)h}{2} \right)^{j}\partial_{i}^{j}f+R_{D_{i}^{n}}(f)\\
        =&\frac{1}{h^{n}}\sum_{j=0}^{n+1}\frac{1}{j!}h^{j}\sum_{k=0}^{n}\binom{n}{k}(-1)^{k}\left(\frac{n}{2}-k \right)^{j}\partial_{i}^{j}f+R_{D_{i}^{n}}(f)\\
        =&\frac{1}{h^{n}n!}h^{n}\sum_{k=0}^{n}\binom{n}{k}(-1)^{k}\left(\frac{n}{2}-k \right)^{n}\partial_{i}^{n}f\\
        &+\frac{1}{h^{n}}\frac{1}{(n+1)!}h^{n+1}\sum_{k=0}^{n}\binom{n}{k}(-1)^{k}\left(\frac{n}{2}-k \right)^{n+1}\partial_{i}^{n+1}f+R_{D_{i}^{n}}(f)\\
        =&\partial_{i}^{n}f+R_{D_{i}^{n}}(f),
    \end{split}
\end{equation} 
which completes the proof.
\end{proof}
A combination of the estimates from \eqref{eq:ave:diff} yields estimates when the composition of the discrete operators is made in different directions.
\begin{corollary}\label{cor:difference:average}
Let $f$ be a smooth function defined in a neighborhood of $\overline{\Omega}$. For 
$i\ne j\in\{ 1,\ldots,d\}$ and $m,n\in\mathbb{N}$ we have 
\begin{equation}
    \begin{split}
        A_{j}^{m}A_{i}^{n}f=&f+R_{A_{j}^{m}}(f)+R_{A_{i}^{n}}(f)+R_{A_{j}^{m}A_{i}^{n}}(f),\\
        D_{j}^{m}D_{i}^{n}f=&\partial_{j}^{m}\partial_{i}^{n}f+R_{D_{j}^{m}}(f)+R_{D_{i}^{n}}(f)+R_{D_{j}^{m}D_{i}^{n}}(f),
    \end{split}
\end{equation}
and 
\begin{equation}
D_{i}^{n}A_{j}^{m}f=\partial_{i}^{n}f+R_{D_{i}^{n}}(f)+R_{A_{j}^{m}}(\partial^{n}_{i}f)+R_{A_{j}^{m}D_{i}^{n}}(f);
\end{equation}
where
\begin{equation*}
    R_{A^{m}_{j}A^{n}_{i}}(f):=\sum_{k=0}^{n}\sum_{k'=0}^{m}a_{k,k'}\int_{0}^{1}\int_{0}^{1}(1-\sigma)(1-\sigma^{\prime})\partial_{j}^{2}\partial_{i}^{2}f\left(\cdot+\tilde{e}_{ij}\right)d\sigma'd\sigma,
\end{equation*}
\begin{equation*}
    R_{D^{m}_{j}D^{n}_{i}}(f):=\sum_{k=0}^{n}\sum_{k'=0}^{m}b_{k,k'}\int_{0}^{1}\int_{0}^{1}(1-\sigma)^{n+1}(1-\sigma')^{m+1}\partial_{j}^{m+2}\partial_{i}^{n+2}f\left(\cdot+\tilde{e}_{ij}\right)d\sigma'd\sigma,
\end{equation*}
 and
 \begin{equation*}
    R_{D^{n}_{i}A^{m}_{j}}(f):=\sum_{k=0}^{n}\sum_{k'=0}^{m}c_{k,k'}\int_{0}^{1}\int_{0}^{1}(1-\sigma)^{2}(1-\sigma')^{n+2}\partial_{i}^{n+2}\partial_{j}^{2}f\left(\cdot+\tilde{e}_{ij}\right)d\sigma'd\sigma;
\end{equation*}
with $\tilde{e}_{ij}=\tilde{e}_{ij}(n,k,h,\sigma,\sigma'):=(n-2k)\frac{h}{2}e_{i}\sigma+(m-2k')\frac{h}{2}e_{j}\sigma'$ for $\sigma,\sigma'\in[0,1]$ and  $$a_{k,k'}:=\frac{h^{4}}{2^{m+n}}\binom{m}{k'}\binom{n}{k}\frac{(n-2k)^{2}}{4}\frac{(m-2k')^{2}}{4},$$
$$b_{k,k'}:=\frac{(-1)^{k+k'}h^{4}}{(n+1)!(m+1)!}\binom{m}{k'}\binom{n}{k}\left(\frac{n-2k}{2}\right)^{n+2}\left(\frac{m-2k'}{2}\right)^{m+2},$$
and 
$$c_{k,k'}:=\frac{(-1)^{k}h^{4}}{(n+1)!2^{m}}\binom{m}{k'}\binom{n}{k}\left(\frac{n-2k}{2}\right)^{n+2}\left(\frac{m-2k'}{2}\right)^{2}.$$
\end{corollary}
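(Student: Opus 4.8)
The plan is to reduce the composite statement to repeated application of Proposition \ref{prop:difference:average}, exploiting that operators acting on distinct coordinate directions commute. Since $i\ne j$, the operators $A_i,D_i$ (acting through the translation $\tau_{\pm i}$ in the $e_i$ direction) commute with $A_j,D_j$; this is immediate from the definitions because translations in orthogonal coordinate directions commute. Hence for any of the three cases I may first apply the one-dimensional expansion \eqref{eq:ave:diff} in the inner direction $i$, then apply it again in the outer direction $j$ to each resulting term.

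Concretely, for the first identity I would write $A_i^n f = f + R_{A_i^n}(f)$ and then apply $A_j^m$ to both sides. By linearity, $A_j^m A_i^n f = A_j^m f + A_j^m R_{A_i^n}(f)$. Applying \eqref{eq:ave:diff} to the first summand gives $A_j^m f = f + R_{A_j^m}(f)$. The second summand $A_j^m R_{A_i^n}(f)$ must be identified with the stated mixed remainder $R_{A_j^m A_i^n}(f)$: here I would expand $A_j^m$ back into the sum $\tfrac{1}{2^m}\sum_{k'}\binom{m}{k'}\tau_{(m-2k')j}$ and apply the second-order Taylor expansion with remainder (the $l=2$ case of \eqref{eq:taylor}) in the $j$-direction to $R_{A_i^n}(f)$, whose integrand already carries a $\partial_i^2 f(\cdot+\tilde e_i\sigma)$ factor. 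The leading Taylor terms vanish by the same binomial identities $\sum_k\binom{n}{k}=2^n$ and $\sum_k\binom{n}{k}k=n2^{n-1}$ used in the proof of the proposition, leaving exactly the double integral with the product coefficient $a_{k,k'}$ and the shifted argument $\tilde e_{ij}$. The $D_j^m D_i^n$ case proceeds identically, now applying \eqref{eq:taylor} with $l=n+2$ (resp.\ $l=m+2$) in each direction and invoking the Tepper identity from \cite{egorychev} together with the vanishing of $\sum_k(-1)^k\binom{n}{k}(n/2-k)^{n+1}$ to kill all lower-order terms; what survives is $\partial_j^m\partial_i^n f$ plus the single-direction remainders plus the mixed remainder with coefficient $b_{k,k'}$.

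The mixed case $A_j^m D_i^n f$ is the one requiring care, and I expect the index bookkeeping there to be the main obstacle. Here the inner operator is $D_i^n$, giving $D_i^n f = \partial_i^n f + R_{D_i^n}(f)$, so applying $A_j^m$ yields $A_j^m D_i^n f = A_j^m(\partial_i^n f) + A_j^m R_{D_i^n}(f)$. The first term expands by \eqref{eq:ave:diff} as $\partial_i^n f + R_{A_j^m}(\partial_i^n f)$, which accounts for two of the stated summands. The remaining term $A_j^m R_{D_i^n}(f)$ is to be matched with $R_{A_i^m D_j^n}(f)$ (the notation in the statement has the roles of the direction labels and the integral orders somewhat scrambled, so I would be careful to read off the coefficient $c_{k,k'}$ and the integrand $\partial_j^{n+2}\partial_i^2 f$ consistently): one expands $A_j^m$ into its translation sum and applies the $l=2$ Taylor remainder in the $j$-direction to the integrand of $R_{D_i^n}(f)$, which already carries $\partial_i^{n+2}f$. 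The two second-order vanishing sums again remove the leading contributions, leaving the claimed double integral. In every case the heart of the argument is that the remainder operators are themselves built from translations plus Taylor remainders, so re-expanding and reapplying the single-direction Taylor expansion produces fourth-order integrals whose coefficients are the products of the one-dimensional coefficients already computed in Proposition \ref{prop:difference:average}; no genuinely new identity is needed beyond those used there.
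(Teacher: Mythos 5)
Your strategy coincides with the paper's own: the paper offers no detailed proof of the corollary beyond invoking ``a combination of the estimates from \eqref{eq:ave:diff}'', and what is intended is exactly what you describe --- since $i\neq j$ the translations $\tau_{\pm i}$ and $\tau_{\pm j}$ commute, so one expands via Proposition \ref{prop:difference:average} in the inner direction and then again in the outer direction, the mixed remainders arising as the composition of the one-dimensional ones, with coefficients $a_{k,k'}$, $b_{k,k'}$, $c_{k,k'}$ the products of the one-dimensional coefficients. You are also right to distrust the statement's bookkeeping: the composition actually produces, e.g., $\partial_j^{2}\partial_i^{n+2}f$ with $(1-\sigma)^{n+1}(1-\sigma')$ in the third remainder and the exponent $m+2$ rather than $m+1$ in $b_{k,k'}$, so the printed indices and powers contain misprints relative to what the argument yields.

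There is, however, one step in your write-up that fails as stated. You claim that $A_j^m R_{A_i^n}(f)$ ``must be identified with the stated mixed remainder $R_{A_j^m A_i^n}(f)$'' and that ``the leading Taylor terms vanish''. They do not all vanish: writing $A_j^m=\tfrac{1}{2^m}\sum_{k'}\binom{m}{k'}\tau_{(m-2k')j}$ and Taylor-expanding to order $2$ in the $j$-direction, the zeroth-order term carries the factor $\tfrac{1}{2^m}\sum_{k'}\binom{m}{k'}=1$ and therefore survives intact, reproducing $R_{A_i^n}(f)$ itself; only the first-order term is killed, by $\sum_{k'}\binom{m}{k'}(m-2k')=0$. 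Hence
\begin{equation*}
A_j^m R_{A_i^n}(f)=R_{A_i^n}(f)+R_{A_j^m}\bigl(R_{A_i^n}(f)\bigr)=R_{A_i^n}(f)+R_{A_j^m A_i^n}(f),
\end{equation*}
and analogously $A_j^m R_{D_i^n}(f)=R_{D_i^n}(f)+R_{A_j^m}\bigl(R_{D_i^n}(f)\bigr)$ in the third identity. Under your literal identifications the right-hand sides would be missing precisely the summands $R_{A_i^n}(f)$ and $R_{D_i^n}(f)$ that the corollary asserts, so the proof as written does not close; the one-line correction above is in fact what your opening plan (``apply \eqref{eq:ave:diff} again in the outer direction to each resulting term'') already prescribes, and the later sentences contradict it. The same care is needed in the $D_j^m D_i^n$ case: there the surviving term from the outer expansion applied to the inner remainder is $\partial_j^m R_{D_i^n}(f)$, not $R_{D_i^n}(f)$, so matching the printed statement requires reading the paper's $R_{D_i^n}(f)$ loosely (or as another misprint) rather than claiming that the Tepper identity removes everything below the mixed remainder.
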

\section{Asymptotic behavior of Carleman weight functions}\label{sec:asymp:weight}
This section is devoted to extending the results on discrete operations performed on the Carleman weight functions, presented in \cite{LOP-2020}, to arbitrary dimensions. We set the weight function of the form $r = e^{s\varphi}$ and $\rho = r^{-1}$, with $s\geq 1$ a parameter; where    $\varphi(x)=e^{\lambda\psi(x)}$ and the function $\psi$ fulfills suitable assumptions concerning the specific issue; that is, controllability or inverse problems. Our main objective is to obtain an asymptotic behavior describing the dependence on $s, h$ in the following basic estimates in domain $\Omega$ when the discrete operators are applied. 
Let us recall that a multi-index has the form $\alpha:=(\alpha_{1},\ldots,\alpha_{n})\in\mathbb{N}^{n}$. Then, for $\alpha\in\mathbb{N}^{n}$ and $\xi\in\mathbb{R}^{n}$ we write the order of the multi-index as $|\alpha|:=\alpha_{1}+\cdots+\alpha_{n}$, the compact notation of a higher-order operator as $
\partial^{\alpha}:=\partial_{x_{1}}^{\alpha_{1}}\cdots\partial_{x_{n}}^{\alpha_{n}}$ and we also adopt the notation
$\xi^{\alpha}:=\xi_{1}^{\alpha_{1}}\cdots\xi_{n}^{\alpha_{n}}$.

Now, we consider two fundamental estimates for our weight function. 
\begin{lemma}[{\cite[Lemma 3.7]{BHLR:2010}}] \label{lem:derivative:wrt:x}
Let $\alpha$ and $\beta$ multi-indices. We have
\begin{align*}
\partial^{\beta}(r\partial^{\alpha} \rho)=&|\alpha|^{|\beta|}(-s\varphi)^{|\alpha|}\lambda^{|\alpha+\beta|}(\nabla \psi)^{\alpha+\beta}+|\alpha||\beta|(s\varphi)^{|\alpha|}\lambda^{|\alpha+\beta|-1}\mathcal{O}(1)\\
&+s^{|\alpha|-1}|\alpha|(|\alpha|-1)\mathcal{O}_{\lambda}(1)\\
=&\mathcal{O}_{\lambda}(s^{|\alpha|}).
\end{align*}
Let $\sigma\in [-1,1]$ and $i\in\{1,\ldots,d\}$. We have
\begin{equation}
\begin{split}
\partial^{\beta}\left( r(x)(\partial^{\alpha}\rho)(x+\sigma he_{i})\right)=\mathcal{O}_{\lambda}(s^{|\alpha|}(1+(sh)^{|\beta|}))e^{\mathcal{O}_{\lambda}(sh)}.
\end{split}
\end{equation}
Provided $sh\leq1$ we have $\partial^{\beta}\left( r(x)(\partial^{\alpha}\rho)(x+\sigma he_{i})\right)=\mathcal{O}_{\lambda}(s^{|\alpha|})$. The same expressions hold with $r$ and $\rho$ interchanged and with $s$ changed to $-s$.
\end{lemma}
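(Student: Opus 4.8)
The plan is to reduce everything to the single auxiliary function
\begin{equation*}
Q_{\alpha} := r\,\partial^{\alpha}\rho = e^{s\varphi}\,\partial^{\alpha}e^{-s\varphi},
\end{equation*}
which is exactly the object appearing in the first identity, since the relation $r\rho=1$ cancels the exponential and leaves a polynomial in $s$; note also $\partial^{\alpha}\rho=\rho\,Q_{\alpha}$. First I would establish, by the multivariate Fa\`a di Bruno formula (or by induction on $|\alpha|$ starting from $\partial_{i}\rho=-s\lambda(\partial_{i}\psi)\varphi\,\rho$), the structure
\begin{equation*}
Q_{\alpha}=(-s\varphi)^{|\alpha|}\lambda^{|\alpha|}(\nabla\psi)^{\alpha}+s^{|\alpha|-1}|\alpha|(|\alpha|-1)\,\mathcal{O}_{\lambda}(1).
\end{equation*}
The leading term arises when each of the $|\alpha|$ derivatives falls on the exponential, each producing a factor $-s\,\partial_{i}\varphi=-s\lambda(\partial_{i}\psi)\varphi$ that couples one power of $s$ to exactly one power of $\lambda$; the remainder records the first merge of two derivatives into a second-order derivative $\partial^{2}\varphi=\mathcal{O}_{\lambda}(\varphi)$, which is where the combinatorial weight $|\alpha|(|\alpha|-1)$ and the drop of one power of $s$ originate. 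In particular there is no $s^{|\alpha|}$ contribution of $\lambda$-order below $|\alpha|$, because every power of $s$ is born together with a power of $\lambda$.

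For the first displayed identity I would then apply $\partial^{\beta}$ to $Q_{\alpha}$ by the Leibniz rule and sort the terms by their order in $s$ and $\lambda$. Using $\partial_{j}\varphi^{|\alpha|}=|\alpha|\lambda(\partial_{j}\psi)\varphi^{|\alpha|}$, letting all $|\beta|$ derivatives act on the factor $\varphi^{|\alpha|}$ of the leading term produces $|\alpha|^{|\beta|}(-s\varphi)^{|\alpha|}\lambda^{|\alpha+\beta|}(\nabla\psi)^{\alpha+\beta}$, which is the first term claimed. Allowing exactly one of the $|\beta|$ derivatives to fall on $(\nabla\psi)^{\alpha}$ instead lowers the power of $\lambda$ by one and carries the factor $|\alpha||\beta|$ (the $|\beta|$ choices of the exceptional derivative together with a differentiation of the $\alpha$-monomial), yielding the second term $|\alpha||\beta|(s\varphi)^{|\alpha|}\lambda^{|\alpha+\beta|-1}\mathcal{O}(1)$, which vanishes when $\beta=0$ as it must; differentiating the $s^{|\alpha|-1}$ remainder of $Q_{\alpha}$ gives the third term. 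Since $\varphi,\psi$ and their derivatives are bounded on $\overline{\Omega}$ and $s\geq 1$, all three are $\mathcal{O}_{\lambda}(s^{|\alpha|})$, which is the final equality.

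The shifted estimate is the genuinely new point. I would write
\begin{equation*}
r(x)(\partial^{\alpha}\rho)(x+\sigma h_{i})=e^{g(x)}\,Q_{\alpha}(x+\sigma h_{i}),\qquad g(x):=s\bigl(\varphi(x)-\varphi(x+\sigma h_{i})\bigr),
\end{equation*}
again using $\partial^{\alpha}\rho=\rho\,Q_{\alpha}$ together with $r(x)\rho(x+\sigma h_{i})=e^{g(x)}$. The crucial observation is that for every multi-index $\gamma$ the derivative $\partial^{\gamma}g=s\bigl((\partial^{\gamma}\varphi)(x)-(\partial^{\gamma}\varphi)(x+\sigma h_{i})\bigr)$ is a difference of the \emph{same} smooth function at two points a distance $\mathcal{O}(h)$ apart, hence $\partial^{\gamma}g=\mathcal{O}_{\lambda}(sh)$; in particular $g=\mathcal{O}_{\lambda}(sh)$, so $e^{g}=e^{\mathcal{O}_{\lambda}(sh)}$. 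Feeding this into Fa\`a di Bruno gives, for $\gamma\neq 0$, $\partial^{\gamma}e^{g}=e^{g}\,\mathcal{O}_{\lambda}\bigl(\sum_{m=1}^{|\gamma|}(sh)^{m}\bigr)$, while $\partial^{\beta-\gamma}\bigl(Q_{\alpha}(x+\sigma h_{i})\bigr)=\mathcal{O}_{\lambda}(s^{|\alpha|})$ by the first part. The Leibniz expansion $\partial^{\beta}=\sum_{\gamma\leq\beta}\binom{\beta}{\gamma}(\partial^{\gamma}e^{g})\,\partial^{\beta-\gamma}\bigl(Q_{\alpha}(\cdot+\sigma h_{i})\bigr)$ then generates factors ranging over $(sh)^{0},\dots,(sh)^{|\beta|}$, and the elementary bound $\sum_{m=0}^{|\beta|}(sh)^{m}\leq(|\beta|+1)\bigl(1+(sh)^{|\beta|}\bigr)$ yields $\mathcal{O}_{\lambda,\varepsilon}\bigl(s^{|\alpha|}(1+(sh)^{|\beta|})\bigr)e^{\mathcal{O}_{\lambda}(sh)}$. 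If $sh\leq\varepsilon$, then $e^{\mathcal{O}_{\lambda}(sh)}$ and $1+(sh)^{|\beta|}$ are both $\mathcal{O}_{\lambda,\varepsilon}(1)$, leaving $\mathcal{O}_{\lambda,\varepsilon}(s^{|\alpha|})$; the variants with $r,\rho$ interchanged and $s\mapsto -s$ follow from the identical computation, since they only flip the sign of $g$.

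The main obstacle I anticipate is the combinatorial bookkeeping in the first part: pinning down the exact weights $|\alpha|(|\alpha|-1)$ and $|\alpha||\beta|$ requires tracking which derivatives merge and which prefactors they strike in the Fa\`a di Bruno and Leibniz expansions. Fortunately these precise constants are inessential for the final $\mathcal{O}_{\lambda}(s^{|\alpha|})$ and $\mathcal{O}_{\lambda,\varepsilon}(s^{|\alpha|}(1+(sh)^{|\beta|}))$ bounds, so the non-leading contributions can be grouped into remainders of the stated order rather than computed term by term; the one step where genuine care is needed is the uniform smallness $\partial^{\gamma}g=\mathcal{O}_{\lambda}(sh)$ of every derivative of the exponent, which is precisely what converts each stray derivative into a gain of one power of $sh$ and produces the cap $(sh)^{|\beta|}$.
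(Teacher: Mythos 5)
Your proposal is correct and coincides with the paper's treatment: the paper does not prove this lemma at all (it quotes it from \cite[Lemma 3.7]{BHLR:2010} and states that ``the proofs of the above results can be found in \cite{BHLR:2010}''), and your argument --- the inductive/Fa\`a di Bruno structure of $r\partial^{\alpha}\rho$ as a polynomial in $s$ with leading term $(-s\varphi)^{|\alpha|}\lambda^{|\alpha|}(\nabla\psi)^{\alpha}$, followed by the factorization $r(x)(\partial^{\alpha}\rho)(x+\sigma h_{i})=e^{s(\varphi(x)-\varphi(x+\sigma h_{i}))}\,(r\partial^{\alpha}\rho)(x+\sigma h_{i})$ with every derivative of the exponent of size $\mathcal{O}_{\lambda}(sh)$ --- is precisely the proof given in that reference. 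Nothing further is needed.
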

Another useful estimate is the following. 
\begin{corollary}[{\cite[Corollary 3.8]{BHLR:2010}}]\label{Cor:derivative:wrt:x:2r}
Let $\alpha,\beta$ and $\delta$ multi-indices. We have
\begin{equation}
\begin{split}
\partial^{\delta}\left(r^{2}\,\partial^{\alpha}\rho\, \partial^{\beta}\rho\right)=&|\alpha+\beta|^{|\delta|}(-s\varphi)^{|\alpha+\beta|}\lambda^{|\alpha+\beta+\delta|}(\nabla\psi)^{\alpha+\beta+\delta}\\
&+|\delta||\alpha+\beta|(s\varphi)^{|\alpha+\beta|}\lambda^{|\alpha+\beta+\delta|-1}\mathcal{O}(1)\\
&+s^{|\alpha+\beta|-1}\left( |\alpha|(|\alpha|-1)+|\beta|(|\beta|-1)\right)\mathcal{O}(1)\\
&=\mathcal{O}_{\lambda}(s^{|\alpha+\beta|}).
\end{split}
\end{equation}
\end{corollary}
The proofs of the above results can be found in \cite{BHLR:2010}. We aim to obtain a general estimate for the discrete operators applied to the Carleman weight function.
 Mimicking the notation for multi-indices in the continuous setting, we define for the two-dimensional index $k=(k_{i},k_{j})$, with $k_{i},k_{j}\in\mathbb{N}$, the discrete operator $\mathbf{D}_{h}^{k}:=D_{i}^{k_{i}}D_{j}^{k_{j}}$. Analogously, for the average operator, we define $\mathbf{A}_{h}^{l}:=A_{i}^{l_{i}}A_{j}^{l_{j}}$ with $l:=(l_{i},l_{j})$ being a two-dimensional multi-index. Considering the previous notation and Proposition \ref{prop:difference:average}, we can extend \cite[Proposition 4.5]{LOP-2020} to arbitrary dimensions.
\begin{proposition}\label{prop:weight}
    Let $\alpha$ be a multi-index and $k,l$ be two-dimensional multi-indices. Let $i,j\in\{1,\ldots,d\} $; provided that $sh\leq 1$, we have
    \begin{equation}\label{eq:average:difference:weight}
\begin{aligned}
r\mathbf{D}_{h}^{k}\mathbf{A}_{h}^{l}\partial^{\alpha}\rho &= r\partial^{k+\alpha}\rho + s^{|\alpha|+k_{i}}\mathcal{O}_{\lambda}((sh)^{2}) \\
&\quad + s^{|\alpha|+k_{j}}\mathcal{O}_{\lambda}((sh)^{2}) + s^{|\alpha+k|}\mathcal{O}_{\lambda}((sh)^{2}).
\end{aligned}
\end{equation}
\end{proposition}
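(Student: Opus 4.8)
The plan is to reduce the composite operator to the one-directional expansions of Proposition~\ref{prop:difference:average} and Corollary~\ref{cor:difference:average}, and then to trade each resulting derivative of $\rho$ for a power of $s$ by means of the weighted estimate in Lemma~\ref{lem:derivative:wrt:x}. First I would expand the difference part, writing by Corollary~\ref{cor:difference:average} (with $n=k_i$, $m=k_j$)
\[
\mathbf{D}_h^k\partial^\alpha\rho=\partial^{k+\alpha}\rho+R_{D_i^{k_i}}(\partial^\alpha\rho)+R_{D_j^{k_j}}(\partial^\alpha\rho)+R_{D_i^{k_i}D_j^{k_j}}(\partial^\alpha\rho),
\]
and then expand the averages through $A_i^{l_i}g=g+R_{A_i^{l_i}}(g)$ and its analogue in the $j$ variable. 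Substituting and distributing, $\mathbf{A}_h^l\mathbf{D}_h^k\partial^\alpha\rho$ becomes the leading term $\partial^{k+\alpha}\rho$ plus a finite sum of remainders; each remainder is an integral over auxiliary parameters $\sigma,\sigma'\in[0,1]$ of a higher-order derivative $\partial^\gamma\rho$ evaluated at a shifted point $x+\tilde e_{ij}$, carrying an explicit scalar factor $h^2$ (for the purely one-directional remainders) or $h^4$ (for the genuinely mixed one).

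The second step is to multiply by $r=r(x)$ and estimate each piece. The leading term reproduces exactly $r\partial^{k+\alpha}\rho$. For each remainder I would carry $r(x)$ inside the integral and apply the shifted form of Lemma~\ref{lem:derivative:wrt:x} with $\beta=0$, which --- precisely because the hypothesis $sh\le\varepsilon$ keeps the factor $e^{\mathcal{O}_\lambda(sh)}$ bounded --- yields $r(x)\,\partial^\gamma\rho(x+\tilde e_{ij})=\mathcal{O}_{\lambda,\varepsilon}(s^{|\gamma|})$ uniformly in the integration variables. It then suffices to count derivative orders. The remainder $R_{D_i^{k_i}}(\partial^\alpha\rho)$ involves $\partial_i^{k_i+2}\partial^\alpha\rho$, so $|\gamma|=|\alpha|+k_i+2$ and, with its prefactor $h^2$, it contributes $s^{|\alpha|+k_i}\mathcal{O}_\lambda((sh)^2)$; symmetrically $R_{D_j^{k_j}}(\partial^\alpha\rho)$ contributes $s^{|\alpha|+k_j}\mathcal{O}_\lambda((sh)^2)$. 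The average remainders, applied to the leading difference term $\partial^{k+\alpha}\rho$, involve $\partial_i^2\partial^{k+\alpha}\rho$ (order $|\alpha|+|k|+2$) with a factor $h^2$, hence land in $s^{|\alpha+k|}\mathcal{O}_\lambda((sh)^2)$, while the mixed difference remainder involves $\partial_i^{k_i+2}\partial_j^{k_j+2}\partial^\alpha\rho$ with a factor $h^4$, giving $s^{|\alpha+k|}\mathcal{O}_\lambda((sh)^4)$; since $sh\le\varepsilon$ forces $(sh)^4\le\varepsilon^2(sh)^2$, this too is absorbed into $s^{|\alpha+k|}\mathcal{O}_\lambda((sh)^2)$. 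Collecting the three buckets produces the right-hand side of \eqref{eq:average:difference:weight}.

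The step I expect to require the most care is the bookkeeping introduced by the averages: distributing $\mathbf{A}_h^l$ over the already-expanded difference expression produces, through the average analogue of Corollary~\ref{cor:difference:average}, a proliferation of nested remainder terms, and one must verify that every one of them falls into one of the three buckets above --- in particular that no surviving term carries a power of $h$ smaller than two or a power of $s$ larger than $|\alpha+k|$. A secondary but genuine technical point is that the shifts $\tilde e_{ij}$ depend on the summation indices and point in both the $e_i$ and $e_j$ directions, so Lemma~\ref{lem:derivative:wrt:x} (whose shifted statement is written for a single direction) must be applied in its evident two-directional form and uniformly over the finitely many shifts, the resulting constants being harmlessly absorbed into the $\mathcal{O}_\lambda$ notation.
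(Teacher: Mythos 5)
Your proposal is correct and takes essentially the same route as the paper's proof: expand the discrete operators through the Taylor-type remainders of Proposition \ref{prop:difference:average} and Corollary \ref{cor:difference:average}, bound $r$ times each shifted derivative of $\rho$ via the shifted estimate of Lemma \ref{lem:derivative:wrt:x} under $sh\leq\varepsilon$, and absorb the $h^{4}$ mixed remainders into $\mathcal{O}_{\lambda}((sh)^{2})$. The only cosmetic difference is the order of expansion --- you expand $\mathbf{D}_{h}^{k}\partial^{\alpha}\rho$ first and then distribute $\mathbf{A}_{h}^{l}$, whereas the paper applies the difference remainders directly to $\mathbf{A}_{h}^{l}\partial^{\alpha}\rho$ and unpacks the averages afterwards --- which is immaterial since the operators commute.
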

\begin{proof}
Applying Corollary \ref{cor:difference:average} to $\mathbf{A}_{h}^{l}\partial^{\alpha}\rho$, we write
\begin{equation}\label{eq:estimation:DA}
\begin{aligned}
r\mathbf{D}_{h}^{k}\mathbf{A}_{h}^{l}\partial^{\alpha}\rho=& r\partial^{k}(\mathbf{A}_{h}^{l}\partial^{\alpha}\rho)+rR_{D^{k_{j}}_{j}}(\mathbf{A}_{h}^{l}\partial^{\alpha}\rho)+rR_{D^{k_{i
}}_{i}}(\mathbf{A}_{h}^{l}\partial^{\alpha}\rho)\\
&+rR_{\mathbf{D}_{h}^{k}}(\mathbf{A}_{h}^{l}\partial^{\alpha}\rho).
\end{aligned}
\end{equation}
Then, using the first identity of Corollary \ref{cor:difference:average}, it follows for the first term on the right-hand side of \eqref{eq:estimation:DA} that
\begin{align*}
r\partial^{k}(\mathbf{A}_{h}^{l}\partial^{\alpha}\rho)=&r\partial^{k+\alpha}\rho+rR_{A_{j}^{l_{j}}}(\partial^{\alpha+k}\rho)+rR_{A_{i}^{l_{i}}}(\partial^{\alpha+k}\rho)+rR_{\mathbf{A}_{h}^{l}}(\partial^{\alpha+k}\rho)
\end{align*}
Now, our next task is to estimate the last three terms on the right-hand side above. Using Corollary \ref{cor:difference:average} and Lemma \ref{lem:derivative:wrt:x} we obtain the estimates
\begin{align*}
    rR_{A_{j}^{l_{j}}}(\partial^{\alpha+k}\rho)=&s^{|\alpha|+|k|}\mathcal{O}_{\lambda}((sh)^{2}),\\
    rR_{A_{i}^{l_{i}}}(\partial^{\alpha+k}\rho)=&s^{|\alpha|+|k|}\mathcal{O}_{\lambda}((sh)^{2}),\\
    rR_{\mathbf{A}_{h}^{l}}(\partial^{\alpha+k}\rho)=&s^{|\alpha|+|k|}\mathcal{O}_{\lambda}((sh)^{4}),
\end{align*}
which  yields 
\begin{align}\label{eq:estimation}
    r\partial^{k}(\mathbf{A}_{h}^{l}\partial^{\alpha}\rho)=r\partial^{k+\alpha}\rho+s^{|\alpha|+|k|}\mathcal{O}_{\lambda}((sh)^{2}).
\end{align} 
Similarly, we have the estimates 
\begin{equation}\label{eq:estimations}
\begin{aligned}
    rR_{D^{k_{j}}_{j}}(\mathbf{A}_{h}^{l}\partial^{\alpha}\rho)=&s^{k_{j}+|\alpha|}\mathcal{O}_{\lambda}((sh)^{2}),\\
    rR_{D^{k_{i}}_{i}}(\mathbf{A}_{h}^{l}\partial^{\alpha}\rho)=&s^{k_{i}+|\alpha|}\mathcal{O}_{\lambda}((sh)^{2}),\\
    rR_{\mathbf{D}_{h}^{k}}(\mathbf{A}_{h}^{l}\partial^{\alpha}\rho)=&s^{|k|+|\alpha|}\mathcal{O}_{\lambda}((sh)^{4}).
\end{aligned}
\end{equation}
Thus, combining the estimates \eqref{eq:estimations} and \eqref{eq:estimation} in \eqref{eq:estimation:DA} we obtain \eqref{eq:average:difference:weight}.
\end{proof}

Another useful estimate is the following lemma.
\begin{lemma}\label{lem:difference:average:2}
Let $\alpha$, $\beta$ and $\delta$ be multi-indices and let $k,l$ be two-dimensional multi-indices. Provided $sh\leq 1$, we have
\begin{equation}\label{lem:difference:average}
\mathbf{D}_{h}^{k}\mathbf{A}_{h}^{l}\left(\partial^{\beta}\left( r\partial^{\alpha}\rho\right) \right)=\partial_{i}^{k_{i}}\partial_{j}^{k_{j}}\partial^{\beta}\left(r\partial^{\alpha}\rho \right)+h^{2}\mathcal{O}_{\lambda}(s^{|\alpha|}).
\end{equation}
Moreover, for $i\neq j\in \{ 1,\ldots,d\}$ we have
\begin{equation}\label{eq0:lem:difference:average:3}
\mathbf{D}_{h}^{k}\mathbf{A}_{h}^{l}\partial^{\delta}\left(r^{2}\left(\partial^{\alpha}\rho\right)\partial^{\beta}\rho\right)=\partial^{k+\delta}\left(r^{2}(\partial^{\alpha}\rho)\partial^{\beta}\rho \right)+h^{2}\mathcal{O}(s^{|\alpha|+|\beta|}),
\end{equation}
and for $\sigma,\sigma'\in[0,1]$ we get
\begin{align}
\mathbf{D}^{k}_{h}\mathbf{A}_{h}^{l}\partial^{\beta}\left(r(x)\partial^{\alpha}\rho(x+\sigma he_{i
} \right))&=\mathcal{O}_{\lambda}(s^{|\alpha|}),\label{eq1:lem:difference:average}\\
\mathbf{D}_{h}^{k}\mathbf{A}_{h}^{l}\partial^{\delta}\left(r^{2}(x)\left( \partial^{\alpha}\rho(x+\sigma he_{i})\right)\partial^{\beta}\rho(x+\sigma' he_{j})\right)&=\mathcal{O}_{\lambda}(s^{|\alpha|+|\beta|}).\label{eq2:lem:difference:average}
\end{align}
\end{lemma}

\begin{proof} The method of proof is analogous to that used in demonstrating Proposition \ref{prop:difference:average}. Specifically, Corollary \ref{cor:difference:average} is applied to $\partial^{\beta}(r\partial^{\alpha}\rho)$, followed by the application of Lemma \ref{lem:derivative:wrt:x} to establish result \eqref{lem:difference:average}. In a similar fashion, equations \eqref{eq0:lem:difference:average:3} and \eqref{eq2:lem:difference:average} are derived using the same approach, with the conclusion utilizing Corollary \ref{Cor:derivative:wrt:x:2r}.
\end{proof}
Now, we state the following Theorem that provides us an estimate for several computations of the discrete average and derivative operators applied to our Carleman weight function. 
\begin{theorem}\label{theo:weight:estimates}
Let $\alpha$ be a multi-index and $l,k,n,m,p$ and $q$ be two-dimensional multi-indices. For $i,j\in\{ 1,\ldots, d\}$, and for $sh\leq 1$, we have
\begin{equation}\label{eq1:theo:weight:estimates}
\begin{aligned}
       \mathbf{A}_{h}^{l}\mathbf{D}_{h}^{k}\partial^{\alpha}(r\mathbf{A}_{h}^{m}\mathbf{D}_{h}^{n}\rho)=&\partial^{k+\alpha}(r\partial^{n}\rho)+s^{|\alpha|+n_{i}}\mathcal{O}_{\lambda}((sh)^{2})\\
       &+s^{|\alpha|+n_{j}}\mathcal{O}_{\lambda}((sh)^{2})+s^{|n|}\mathcal{O}_{\lambda}((sh)^{2})
\end{aligned}       
\end{equation}
and 
\begin{equation}\label{eq2:theo:weight:estimates}
\begin{aligned}
\mathbf{A}_{h}^{l}\mathbf{D}_{h}^{k}\partial^{\beta}(r^{2}\mathbf{A}^{p}_{h}\mathbf{D}^{q}_{h}(\partial^{\alpha}\rho)\mathbf{A}_{h}^{m}\mathbf{D}_{h}^{n}\rho)=&\partial^{k+\beta}(r^{2}(\partial^{q+\alpha}\rho)\partial^{n}\rho)\\
&+s^{|\alpha+n+q|}\mathcal{O}_{\lambda}((sh)^{2}).
\end{aligned}
\end{equation}
\end{theorem}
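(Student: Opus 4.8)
The plan is to reduce both identities to the building-block estimates already available, through a uniform two-step scheme. First, I expand each inner block of discrete operators acting on $\rho$ into its continuous derivative plus the explicit integral remainders of Corollary~\ref{cor:difference:average} (this is precisely what Proposition~\ref{prop:weight} packages); then I apply the outer operator $\mathbf{A}_{h}^{l}\mathbf{D}_{h}^{k}\partial^{\alpha}$ (resp.\ $\partial^{\beta}$) by linearity, estimating the principal part with Lemma~\ref{lem:difference:average:2} and each remainder with the shifted estimates \eqref{eq1:lem:difference:average}/\eqref{eq2:lem:difference:average}. The structural fact that makes this work is that, by Lemma~\ref{lem:derivative:wrt:x} and \eqref{eq1:lem:difference:average}, applying arbitrarily many continuous derivatives and discrete operators to a product $r(x)\partial^{\gamma}\rho(x+\tilde e)$ with $\tilde e$ of order $h$ never raises the $s$-exponent above $s^{|\gamma|}$, since $r(x)\rho(x+\tilde e)=e^{\mathcal{O}(sh)}=\mathcal{O}(1)$ and every derivative of that exponent is $\mathcal{O}(sh)=\mathcal{O}(1)$ once $sh\le1$.

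For \eqref{eq1:theo:weight:estimates} I apply Proposition~\ref{prop:weight} with $\alpha=0$ to write $r\mathbf{A}_{h}^{m}\mathbf{D}_{h}^{n}\rho=r\partial^{n}\rho+E$, where $E$ collects the remainders of Corollary~\ref{cor:difference:average}; each summand has the shape $r(x)\partial^{\gamma}\rho(x+\tilde e)$, with $|\gamma|\in\{n_{i}+2,\,n_{j}+2,\,|n|+2\}$ for the summands carrying a factor $h^{2}$ and $|\gamma|=|n|+4$ for those carrying $h^{4}$. Applying $\mathbf{A}_{h}^{l}\mathbf{D}_{h}^{k}\partial^{\alpha}$ and using linearity, the principal part $\mathbf{A}_{h}^{l}\mathbf{D}_{h}^{k}\partial^{\alpha}(r\partial^{n}\rho)$ is evaluated by \eqref{lem:difference:average}, producing the principal term of \eqref{eq1:theo:weight:estimates} together with a remainder $h^{2}\mathcal{O}_{\lambda}(s^{|n|})$. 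For the error part I pull the operators through the $\sigma$-integrals (legitimate, as the shifts act on $x$ only) and apply \eqref{eq1:lem:difference:average} term by term, obtaining $h^{2}\mathcal{O}_{\lambda}(s^{n_{i}+2})$, $h^{2}\mathcal{O}_{\lambda}(s^{n_{j}+2})$, $h^{2}\mathcal{O}_{\lambda}(s^{|n|+2})$ and $h^{4}\mathcal{O}_{\lambda}(s^{|n|+4})$, that is $s^{n_{i}}\mathcal{O}_{\lambda}((sh)^{2})$, $s^{n_{j}}\mathcal{O}_{\lambda}((sh)^{2})$, $s^{|n|}\mathcal{O}_{\lambda}((sh)^{2})$ and $s^{|n|}\mathcal{O}_{\lambda}((sh)^{4})$. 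Since $s\ge1$ and $sh\le1$, these sit inside the three advertised error terms, which proves \eqref{eq1:theo:weight:estimates}.

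For \eqref{eq2:theo:weight:estimates} I factor $r^{2}=r\cdot r$ and apply Proposition~\ref{prop:weight} to each factor, so that $r\mathbf{A}_{h}^{p}\mathbf{D}_{h}^{q}\partial^{\alpha}\rho=r\partial^{q+\alpha}\rho+E_{1}$ and $r\mathbf{A}_{h}^{m}\mathbf{D}_{h}^{n}\rho=r\partial^{n}\rho+E_{2}$. Expanding the product, the inner expression equals $r^{2}(\partial^{q+\alpha}\rho)\partial^{n}\rho$ plus the three mixed contributions $r(\partial^{q+\alpha}\rho)E_{2}$, $E_{1}\,r\partial^{n}\rho$ and $E_{1}E_{2}$, each a sum of terms of the form $h^{2}\,r^{2}(x)\partial^{\gamma}\rho(x+\tilde e)\,\partial^{\gamma'}\rho(x+\tilde e')$ (with an $h^{4}$ prefactor for the terms coming from $E_{1}E_{2}$). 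Applying $\mathbf{A}_{h}^{l}\mathbf{D}_{h}^{k}\partial^{\beta}$, the principal term is treated by \eqref{eq0:lem:difference:average}, giving $\partial^{k+\beta}(r^{2}(\partial^{q+\alpha}\rho)\partial^{n}\rho)$ up to $h^{2}\mathcal{O}(s^{|q+\alpha|+|n|})$, while every mixed term falls under \eqref{eq2:lem:difference:average} (one factor possibly carrying the zero shift) and contributes at most $h^{2}\mathcal{O}_{\lambda}(s^{|q+\alpha|+|n|+2})$, the $E_{1}E_{2}$ terms being even smaller, of size $h^{4}\mathcal{O}_{\lambda}(s^{|q+\alpha|+|n|+4})$. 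Collecting and using $s\ge1$, $sh\le1$ absorbs all of these into the single term $s^{|\alpha+n+q|}\mathcal{O}_{\lambda}((sh)^{2})$, giving \eqref{eq2:theo:weight:estimates}.

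I expect the main obstacle to be the bookkeeping of the remainders rather than any single sharp estimate. One must check that, after the operators are pulled through the $\sigma,\sigma'$-integrals, each summand genuinely satisfies the hypotheses of \eqref{eq1:lem:difference:average}/\eqref{eq2:lem:difference:average}: the remainder shifts $\tilde e_{ij}$ of Corollary~\ref{cor:difference:average} are two-directional, so one uses that those shifted estimates hold verbatim for any shift of order $h$ (their proof relies only on $r(x)\rho(x+\tilde e)=e^{\mathcal{O}(sh)}$ being bounded and on its derivatives being $\mathcal{O}(sh)$). One must also confirm that the weight is exactly $r$, respectively $r^{2}$, and that the $s$-exponent produced equals the order of the derivatives falling on $\rho$, so that the generous powers $s^{|\alpha|}$ and $s^{|q+\alpha|}$ in the target permit the absorption via $s\ge1$ and $sh\le1$. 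The case $i=j$ reduces to a single direction and is simpler, while for $i\ne j$ the mixed remainders $R_{\mathbf{A}_{h}^{l}}$, $R_{\mathbf{D}_{h}^{k}}$ carry a factor $h^{4}$ and are hence controlled by $(sh)^{2}$; the $s$-power counting is identical in all cases.
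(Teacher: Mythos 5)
Your proposal is correct and takes essentially the same route as the paper's proof: expand the inner blocks $r\mathbf{A}_{h}^{m}\mathbf{D}_{h}^{n}\rho$ (and $r\mathbf{A}_{h}^{p}\mathbf{D}_{h}^{q}\partial^{\alpha}\rho$) via Corollary \ref{cor:difference:average} into principal term plus explicit integral remainders, distribute the outer operator $\mathbf{A}_{h}^{l}\mathbf{D}_{h}^{k}\partial^{\alpha}$ (resp.\ $\partial^{\beta}$), and control the principal part with Lemma \ref{lem:difference:average:2} and the shifted remainder terms with \eqref{eq1:lem:difference:average} and \eqref{eq2:lem:difference:average}, backed by Lemma \ref{lem:derivative:wrt:x}, Corollary \ref{Cor:derivative wrt x  for 2r} and Proposition \ref{prop:weight}. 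If anything you are more careful than the paper on a point it leaves implicit, namely that the shifted estimates apply verbatim to the two-directional shifts $\tilde{e}_{ij}$ of order $h$ appearing in the mixed remainders, since their proofs only use $r(x)\rho(x+\tilde{e}_{ij})=e^{\mathcal{O}_{\lambda}(sh)}$.
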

\begin{proof}
From Corollary \ref{cor:difference:average} we write
\begin{equation*}
    r\mathbf{A}_{h}^{m}\mathbf{D}_{h}^{n}\rho= r\partial^{n}(\mathbf{A}_{h}^{m}\rho)+rR_{D^{n_{j}}_{j}}(\mathbf{A}_{h}^{m}\rho)+rR_{D^{n_{i}}_{i}}(\mathbf{A}_{h}^{m}\rho)+rR_{\mathbf{D}_{h}^{n}}(\mathbf{A}_{h}^{m}\rho).
\end{equation*}
Then, applying the operator $\mathbf{A}_{h}^{l}\mathbf{D}_{h}^{k}\partial^{\alpha}$ we have
\begin{align}\label{eq:prop:difference}
    \mathbf{A}_{h}^{l}\mathbf{D}_{h}^{k}\partial^{\alpha}  (r\mathbf{A}_{h}^{m}\mathbf{D}_{h}^{n}\rho)=&  \mathbf{A}_{h}^{l}\mathbf{D}_{h}^{k}\partial^{\alpha}(r\partial^{n}(\mathbf{A}_{h}^{m}\rho))+ \mathbf{A}_{h}^{l}\mathbf{D}_{h}^{k}\partial^{\alpha}(rR_{D^{n_{j}}_{j}}(\mathbf{A}_{h}^{m}\rho))\notag\\
    &+ \mathbf{A}_{h}^{l}\mathbf{D}_{h}^{k}\partial^{\alpha}(rR_{D^{n_{i}}_{i}}(\mathbf{A}_{h}^{m}\rho))+ \mathbf{A}_{h}^{l}\mathbf{D}_{h}^{k}\partial^{\alpha}(rR_{\mathbf{D}_{h}^{n}}(\mathbf{A}_{h}^{m}\rho)).
\end{align}
Let us focus on the term $\mathbf{A}_{h}^{l}\mathbf{D}_{h}^{k}\partial^{\alpha}(r\partial^{n}(\mathbf{A}_{h}^{m}\rho))$. We notice that, for this term, using the first equation from Corollary \ref{cor:difference:average} applied to $\mathbf{A}_{h}^{m}\rho$ gives
\begin{align*}
    \mathbf{A}_{h}^{l}\mathbf{D}_{h}^{k}\partial^{\alpha}(r\partial^{n}(\mathbf{A}_{h}^{m}\rho))=&\mathbf{A}_{h}^{l}\mathbf{D}_{h}^{k}\partial^{\alpha}(r\partial^{n}\rho)+\mathbf{A}_{h}^{l}\mathbf{D}_{h}^{k}\partial^{\alpha}(r\partial^{n}R_{A_{i}^{m_{i}}}(\rho))\\
    &+\mathbf{A}_{h}^{l}\mathbf{D}_{h}^{k}\partial^{\alpha}(r\partial^{n}R_{A_{j}^{m_{j}}}(\rho))+\mathbf{A}_{h}^{l}\mathbf{D}_{h}^{k}\partial^{\alpha}(r\partial^{n}R_{\mathbf{A}_{h}^{m}}(\rho)).
    \end{align*}
    Thus, applying Proposition \ref{prop:weight} to the first term from the right-hand side above, and then \eqref{eq1:lem:difference:average} from Lemma \ref{lem:difference:average:2} we can estimate 
    \begin{align*}
    \mathbf{A}_{h}^{l}\mathbf{D}_{h}^{k}\partial^{\alpha}(r\partial^{n}(\mathbf{A}_{h}^{m}\rho))=&\partial^{k+\alpha}(r\partial^{n}\rho)+s^{|\alpha|+n_{i}}\mathcal{O}_{\lambda}((sh)^{2})+s^{|\alpha|+n_{j}}\mathcal{O}_{\lambda}((sh)^{2})\\
    &+s^{|n|}\mathcal{O}_{\lambda}((sh)^{2}).
    \end{align*}
    Similarly, we can follow the same steps above to estimate the last three terms of \eqref{eq:prop:difference}, which are of order $\mathcal{O}_{\lambda}((sh)^{2})$ and $\mathcal{O}_{\lambda}((sh)^{4})$, and conclude the proof of \eqref{eq1:theo:weight:estimates}. To prove \eqref{eq2:theo:weight:estimates}, we note that thanks to Corollary \ref{cor:difference:average} we write
   \begin{equation*}
    r\mathbf{A}_{h}^{m}\mathbf{D}_{h}^{n}\rho= r\partial^{n}(\mathbf{A}_{h}^{m}\rho)+rR_{D^{n_{j}}_{j}}(\mathbf{A}_{h}^{m}\rho)+rR_{D^{n_{i}}_{i}}(\mathbf{A}_{h}^{m}\rho)+rR_{\mathbf{D}_{h}^{n}}(\mathbf{A}_{h}^{m}\rho)
\end{equation*}
and 
\begin{equation*}
    r\mathbf{A}_{h}^{p}\mathbf{D}_{h}^{q}(\partial^{\alpha}\rho)= r\partial^{q}(\mathbf{A}_{h}^{p}\partial^{\alpha}\rho)+rR_{D^{q_{j}}_{j}}(\mathbf{A}_{h}^{p}\partial^{\alpha}\rho)+rR_{D^{q_{i}}_{i}}(\mathbf{A}_{h}^{p}\partial^{\alpha}\rho)+rR_{\mathbf{D}_{h}^{q}}(\mathbf{A}_{h}^{p}\partial^{\alpha}\rho).
\end{equation*}
Thus, multiplying the above expressions by $\mathbf{A}_{h}^{m}\mathbf{D}_{h}^{n}\rho$ and arguing as in the proof of \eqref{eq1:theo:weight:estimates} the estimate of \eqref{eq2:theo:weight:estimates} follows.
\end{proof}
\begin{remark}
We remark that the estimates in this section remain valid if we interchange $\rho$ and $r$. This is because Lemma \ref{lem:derivative:wrt:x} and Corollary \ref{Cor:derivative:wrt:x:2r} still hold with this change, and we can replace $s$ by $-s$ in these statements. 
\end{remark}

It is worth noting that the previous results can be extended to the case where the weight function is time-dependent. For instance, if we consider a weight function of the form $r(x,t)=e^{\tau\theta(t)\varphi(x)}$, the condition $sh\leq 1$ needs to be replaced by $\tau h(\max_{[0,T]}\theta(t))\leq 1$, which implies $\tau\theta(t)h\leq 1$. This is typically the case for parabolic operators where the Carleman weight function has the form $e^{\tau\theta(t)\varphi(x)}$ and the function $\theta(t)$ is given by 
\begin{equation}\label{theta-delta}
    \theta(t)=\frac{1}{(t+\delta T)(T+\delta T-t)},\quad t\in [0,T],
\end{equation}
for some $0<\delta \leq 1/2$. The parameter $\delta$ is chosen to avoid singularities at time $t=0$ and $t=T$ since $\theta$ must be bounded on the interval $[0,T]$ to obtain an estimate similar to Theorem \ref{theo:weight:estimates} for the time-dependent case. Moreover, we notice that
\begin{equation}\label{eq:theta} 
\max_{t\in [0,T]} \theta(t)=\theta(0)=\theta(T)=\frac{1}{T^{2}\delta(1+\delta)}\leq \frac{1}{T^{2}\delta}, 
\end{equation}
 Thus, the condition $sh\leq 1$ for $\theta$ as in \eqref{theta-delta} is replaced by $sh(T^{2}\delta)^{-1}\leq 1$. This situation is documented in the works \cite{boyer-2014,GC-HS-2021,LMPZ:2023}. Therefore, Theorem \ref{theo:weight:estimates} is a generalization of the estimates presented in those works.
 
\par For the aforementioned Carleman weight function, we obtain, in the case of time derivative, the following asymptotic behavior, which stands for a generalization into an arbitrary dimension of Theorem 2.9 from \cite{CLTP-2022}.  
\begin{theorem}\label{theo:time:derivate}
Let $\alpha$ be a multi-index and let $m,n,j,k$ be two-dimensional multi-indices. Then, provided $\tau h(\delta T^{2})^{-1}\leq 1$, we have
\begin{equation}\label{eq:temporal:estimate}
    \begin{aligned}\partial_{t}\left( r \mathbf{D}_{h}^{n}\mathbf{A}_{h}^{m}\partial^{\alpha}\rho\right)=&\partial_{t}\left(r\partial^{n+\alpha}\rho \right)+Ts^{|\alpha|+n_{i}}\theta(t)\mathcal{O}_{\lambda}((sh)^{2})\\
    &+Ts^{|\alpha|+n_{j}}\theta(t)\mathcal{O}_{\lambda}((sh)^{2})+Ts^{|\alpha|+|n|}\theta(t)\mathcal{O}_{\lambda}((sh)^{2}),
    \end{aligned}
\end{equation}
and
\begin{equation}\label{eq:temporal:estimation}
\begin{split}
    \partial_{t} \mathbf{D}_{h}^{k}\mathbf{A}_{h}^{j}\left(r\mathbf{D}_{h}^{n}\mathbf{A}_{h}^{m}\partial^{\alpha}\rho\right)
    =&T\theta s^{|\alpha|+|n|}\mathcal{O}_{\lambda}(1).
    \end{split}
\end{equation}
\end{theorem}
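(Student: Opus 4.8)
The plan is to exploit the fact that the discrete averaging and difference operators $\mathbf{A}_h^m,\mathbf{D}_h^n$ and the spatial derivatives $\partial^\alpha$ act only on the space variable, hence commute with $\partial_t$. Consequently, both estimates reduce to first applying the \emph{spatial} results already established at each fixed time $t$, and then differentiating the resulting identities in $t$. The crucial preliminary observation is that, for the time-dependent weight $r(x,t)=e^{s\theta(t)\varphi(x)}$, the hypothesis $\tau h(\delta T^2)^{-1}\le 1$ guarantees, via \eqref{eq:theta}, that $s\theta(t)h\le 1$ uniformly on $[0,T]$. Therefore Lemma~\ref{lem:derivative:wrt:x}, Corollary~\ref{Cor:derivative wrt x  for 2r}, Proposition~\ref{prop:weight} and Theorem~\ref{theo:weight:estimates} all hold verbatim with $s$ replaced by $s\theta(t)$, uniformly in $t$.

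For \eqref{eq:temporal:estimate}, I would start from Proposition~\ref{prop:weight} applied at fixed $t$, writing
\begin{equation*}
r\mathbf{A}_h^m\mathbf{D}_h^n\partial^\alpha\rho = r\partial^{n+\alpha}\rho + E(x,t),
\end{equation*}
where $E$ collects the three remainder contributions, each of the form $s^{\bullet}\mathcal{O}_\lambda((s\theta h)^2)$. Applying $\partial_t$ gives the leading term $\partial_t(r\partial^{n+\alpha}\rho)$ plus $\partial_t E$, so everything rests on estimating $\partial_t E$. The key computation is that $\partial_t r = s\theta'(t)\varphi\, r$ and $\partial_t\rho=-s\theta'(t)\varphi\,\rho$, together with the explicit identity $\theta'(t)=-(T-2t)\theta(t)^2$ coming from \eqref{theta-delta}. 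Since $|T-2t|\le T$ and $\theta$ is bounded by $(\delta T^2)^{-1}$ on $[0,T]$, differentiating in time each remainder term (differentiation under the integral sign in the representations of Corollary~\ref{cor:difference:average} being justified by smoothness) produces exactly one extra factor proportional to $T\theta(t)$, the surplus powers of $\theta$ being absorbed into the bounded $\mathcal{O}_\lambda$ constants. Collecting the three remainder types then yields the three error terms of \eqref{eq:temporal:estimate}.

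Estimate \eqref{eq:temporal:estimation} is handled analogously but one level higher: here I would first use Theorem~\ref{theo:weight:estimates}, applied at fixed $t$, to control the purely spatial quantity $\mathbf{A}_h^j\mathbf{D}_h^k(r\mathbf{A}_h^m\mathbf{D}_h^n\partial^\alpha\rho)=\mathcal{O}_\lambda((s\theta)^{|\alpha+n|})$, and then differentiate in $t$. Because both the leading part and the remainders now carry the same spatial order, there is no cancellation to exploit and one simply bounds the whole time-differentiated expression, again gaining the factor $T\theta(t)$ from $\partial_t r$ and $\theta'=-(T-2t)\theta^2$; this produces the single term $T\theta\, s^{|\alpha+n|}\mathcal{O}_\lambda(1)$. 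The main obstacle throughout is the bookkeeping of the time derivative acting on the integral remainder terms: one must verify that differentiation commutes with the $\sigma$-integrals of Corollary~\ref{cor:difference:average}, and then track carefully how the factor $s\theta'(t)$ combines with the existing powers of $s\theta(t)$ so that, after using the boundedness of $\theta$ and $|T-2t|\le T$, precisely the claimed prefactor $T\theta(t)$ survives, with no extra powers of $sh$ beyond those already present.
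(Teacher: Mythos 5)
Your overall architecture is the same as the paper's: expand $r\mathbf{A}_h^m\mathbf{D}_h^n\partial^\alpha\rho$ via Corollary~\ref{cor:difference:average} into the leading term plus explicit integral remainders, differentiate in time under the $\sigma$-integrals, and estimate term by term. But there is a genuine gap at the one step that carries all the weight, namely your claim that time differentiation ``produces exactly one extra factor proportional to $T\theta(t)$.'' Writing $s(t)=\tau\theta(t)$, one has $\partial_t r=\tau\theta'\varphi\, r$ with $\theta'=-(T-2t)\theta^2$, so $\tau\theta'=-(T-2t)\,s(t)\theta(t)$ and the factor produced by the bound you describe is of size $T\theta\cdot s\varphi$: that is $T\theta$ \emph{times an extra power of $s$}. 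Boundedness of $\theta$ cannot remove that power ($\theta$-absorption addresses surplus powers of $\theta$, not of $s$), so executed as written your bookkeeping yields remainders of size $Ts^{|\alpha+n|+1}\theta\,\mathcal{O}_\lambda((sh)^2)$, one factor of $s$ worse than \eqref{eq:temporal:estimate}.

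What restores the stated power is a cancellation you never actually invoke: in $\partial_t\bigl(r(x,t)(\partial^\beta\rho)(x+\sigma h e_i,t)\bigr)$ the contribution $\tau\theta'\varphi(x)\,r\,\partial^\beta\rho$ coming from $\partial_t r$ cancels, at leading order and up to shift errors controlled by $sh\le 1$, against the leading part of $r\,\partial^\beta(\partial_t\rho)=-\tau\theta'\,r\,\partial^\beta(\varphi\rho)$; equivalently, $r\partial^\beta\rho$ is a polynomial of degree $|\beta|$ in $s(t)$, so $\partial_t$ lowers the $s$-degree by one while multiplying by $\tau\theta'=\mathcal{O}(T\theta s)$, for a net gain of exactly $T\theta$ with the same power of $s$. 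The paper does not re-derive this fact: it imports it wholesale as the first estimate of \cite[Proposition~2.14]{boyer-2014}, displayed as \eqref{eq:coro:rho}, and only then multiplies by the explicit $h^2$ carried by the remainders of Proposition~\ref{prop:difference:average} to obtain $Ts^{|\alpha+n|}\theta\,\mathcal{O}_\lambda((sh)^2)$. You do write $\partial_t r$ and $\partial_t\rho$ with their opposite signs side by side, which is where the cancellation hides, but the mechanism you then state (absorbing surplus powers of $\theta$) is the wrong one and does not close the argument. The same repair is needed for \eqref{eq:temporal:estimation}: one cannot ``differentiate'' the bound $\mathcal{O}_\lambda(s^{|\alpha+n|})$ furnished by Theorem~\ref{theo:weight:estimates} at fixed $t$ --- an $\mathcal{O}$-estimate carries no information about its time derivative --- so there too you must pass through the explicit decomposition and apply the time-derivative estimate of \cite{boyer-2014} (or prove the cancellation directly) to each term.
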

\begin{proof}
We begin as in the proof of Theorem \ref{theo:weight:estimates}, by using Corollary \ref{cor:difference:average} we have 
\begin{equation*}
\begin{aligned}
   \partial_{t}(r\mathbf{D}_{h}^{n}\mathbf{A}_{h}^{m}\partial^{\alpha}\rho)=&\partial_{t}(r\partial^{n}(\mathbf{A}_{h}^{m}\partial^{\alpha}\rho))+\partial_{t}(rR_{D^{n_{j}}_{j}}(\mathbf{A}_{h}^{m}\partial^{\alpha}\rho))\\
   &+\partial_{t}(rR_{D^{n_{i}}_{i}}(\mathbf{A}_{h}^{m}\partial^{\alpha}\rho))+\partial_{t}(rR_{\mathbf{D}_{h}^{n}}(\mathbf{A}_{h}^{m}\partial^{\alpha}\rho)).
   \end{aligned}
\end{equation*}
Note that applying Corollary \ref{cor:difference:average} to $\partial_{t}(r\partial^{n}(\mathbf{A}_{h}^{m}\partial^{\alpha}\rho))$ we write
\begin{equation}\label{estimate:theo:time1}
\begin{aligned}
    \partial_{t}(r\partial^{n}(\mathbf{A}_{h}^{m}\partial^{\alpha}\rho))=&\partial_{t}(r\partial^{n+\alpha}\rho)+\partial_{t}(rR_{A_{i}^{m_{i}}}(\partial^{n+\alpha}\rho))\\
    &+\partial_{t}(rR_{A_{j}^{m_{j}}}(\partial^{n+\alpha}\rho))+\partial_{t}(rR_{A_{i}^{m_{i}}A_{j}^{m_{j}}}(\partial^{n+\alpha}\rho)).
    \end{aligned}
\end{equation}
Thus, our next task is to estimate the last three terms from the right-hand side above. First, focusing on $\partial_{t}(rR_{A_{i}^{m_{i}}}(\partial^{n+\alpha}\rho))$, thanks to the first estimate from \cite[Proposition 2.14]{boyer-2014} it follows that
\begin{equation}\label{eq:coro:rho}
\partial_{t}(r(x,t)(\partial_{i}^{2}\partial^{n+\alpha}\rho)(x+(m_{i}-2k)e_{i}h\sigma/2,t))=Ts^{|\alpha|+|n|+2}\theta(t)\mathcal{O}_{\lambda}(1).
\end{equation}
Then, with the above estimate and the asymptotic expansion of $R_{A_{i}^{m_{i}}}$ given by Proposition \ref{prop:difference:average} we obtain $\partial_{t}(rR_{A_{i}^{m_{i}}}(\partial^{n+\alpha}\rho))=Ts^{|\alpha|+|n|
}\theta(t)\mathcal{O}_{\lambda}((sh)^{2})$. A similar procedure for the last two terms on the right-hand side of \eqref{estimate:theo:time1} allows to estimate
\begin{align*}
    \partial_{t}(rR_{A_{j}^{m_{j}}}(\partial^{n+\alpha}\rho))=&Ts^{|\alpha|+|n|
}\theta(t)\mathcal{O}_{\lambda}((sh)^{2}),\\
    \partial_{t}(rR_{A_{i}^{n_{i}}A_{j}^{m_{j}}}(\partial^{n+\alpha}\rho))=&Ts^{|\alpha|+|n|
}\theta(t)\mathcal{O}_{\lambda}((sh)^{4}).
\end{align*}
Therefore, the previous last three estimates yield that \eqref{estimate:theo:time1} can be estimated as 
\begin{equation*}
    \partial_{t}(r\partial^{n}(\mathbf{A}_{h}^{m}\partial^{\alpha}\rho))= Ts^{|\alpha|+|n|
}\theta(t)\mathcal{O}_{\lambda}((sh)^{2}).
\end{equation*}

Similarly, we obtain
\begin{align*}
    \partial_{t}(rR_{D^{n_{j}}_{j}}(\mathbf{A}_{h}^{m}\partial^{\alpha}\rho))=&Ts^{|\alpha|+n_{j}}\theta(t)\mathcal{O}_{\lambda}((sh)^{2}),\\
   \partial_{t}(rR_{D^{n_{i}}_{i}}(\mathbf{A}_{h}^{m}\partial^{\alpha}\rho))=&Ts^{|\alpha|+n_{i}}\theta(t)\mathcal{O}_{\lambda}((sh)^{2}),\\
   \partial_{t}(rR_{\mathbf{D}_{h}^{n}}(\mathbf{A}_{h}^{m}\partial^{\alpha}\rho))=&Ts^{|\alpha|+|n|}\theta(t)\mathcal{O}_{\lambda}((sh)^{2}).
\end{align*}
Combining these last four estimates, we establish \eqref{eq:temporal:estimate}.\\
The estimate \eqref{eq:temporal:estimation} can be obtained following a similar strategy.
\end{proof}
\begin{remark}
    Theorem \ref{theo:time:derivate} is a generalization of the second and third estimates from \cite[Proposition 2.14]{boyer-2014} and \cite{HS:2023}.  
\end{remark}

\begin{remark}\label{rmk:1} In the study of inverse problem for  hyperbolic operators \cite{BEO-2015,BE-2013} and \cite{B-B-E-O:2021} 
 is considered the following Carleman weight function 
 \begin{equation}\label{eq:Carleman:wave}
\psi(t,x) = |x - x^{\ast}|^2 - \beta t^2 + c_0, \quad \varphi(t,x) = e^{\mu \psi(t,x)},
\end{equation}
where $x^{\ast}\notin [0,1]^{2}$, and $c_0 > 0$ is such that $\psi \geq 1$ in $[-T,T] \times [0,1]^2$, $\beta\in(0,1)$ and $\mu \geq 1$ is a suitable parameter. Thus, in this setting, we can write
\begin{equation*}
    e^{\lambda\varphi}=e^{\lambda \tilde{\theta}(t) \phi(x)}
\end{equation*}
with $\tilde{\theta}(t):=e^{-\mu\beta t^{2}}$ and $\phi(x):=e^{\mu(|x-x^{\ast}|^{2}+c_{0})}$. Hence, for these weight functions, we can apply Theorem \ref{theo:weight:estimates} to determine its asymptotic behavior since the time variable function is bounded in the interval $[0,T]$. However, this is not the case for Theorem \ref{theo:time:derivate} since the time-dependent factor does not take the same form as $\theta(t)$ in the parabolic case. See also \cite{ZZ:2022} and \cite{ZZ:2023}, where a similar weight function is considered to study the coefficient inverse problems for damped wave and Schr\"{o}dinger equations, respectively. 
\end{remark}
\begin{remark}
In the same direction as the aforementioned remark, in \cite{WZ:2024}, in the study of an inverse problem for a semi-discrete stochastic parabolic operator is considered, a Carleman weight function like \eqref{eq:Carleman:wave}. For this reason, the estimates established above provide a useful tool to extend the results from \cite{WZ:2024} to arbitrary dimensions.
\end{remark}

Similarly to the previous results, it is possible to obtain an asymptotic expansion considering the time and spatial discrete operators. The main task is to generalize to arbitrary dimensions the asymptotic behavior obtained in \cite{LMPZ:2023}.Now, we will introduce some notations to define the discretization of the time variable. To do this, we recall $\Delta t= T/N$, with $N\in \mathbb{N}$. We define
\begin{align*}
    \mathcal{N}=&\{j\Delta t\,;\, j\in \{ 1,\ldots, N\}\}, \quad \overline{\mathcal{N}}=\mathcal{N}\cup \{0\},\\
    \mathcal{N}^{\ast}=&\{(j-1/2)\Delta t\,;\, j\in \{1,\ldots,N\}\},\quad \overline{\mathcal{N}}^*=\mathcal{N}^{\ast} \cup \{T+\Delta t/2\}. 
\end{align*}
We define the time discrete derivative of a function $y:[0,T]\to \mathbb{R}$ sampled on $\mathcal{N}$ as follows
\begin{align*}
    D_{t} y:=\dfrac{\trp    y-\trm   y}{\Delta t}\quad \text{in } \mathcal{N}^{\ast},
\end{align*}
where the translation in time is defined by $\trpm y(t):=y(t\pm \Delta t/2)$, with $t\in \mathcal{N}$. For the time-discrete operator  we have the following asymptotic behavior
\begin{lemma}\label{lem:time:discrete:weight}
    Provided $\Delta t\tau(T^{3}\delta^{2})^{-1}\leq 1$, we have
    \begin{equation}\label{eq:theta:D:t}
        \trm(\theta^{p}e^{s\varphi})D_{t}(\theta^{-p}e^{-s\varphi})=\trm(\theta^{p}e^{s\varphi}\partial_{t}(\theta^{-p}e^{-s\varphi}))+\mathcal{E}(\Delta t) \mathcal{O}_{\lambda}(1),
    \end{equation}
    where $\mathcal{E}(\Delta t):=\mathcal{E}(\Delta t, p,\delta,\tau,T):=p(p+1)\frac{\Delta t}{\delta^{p+4}T^{2}}+p\frac{\Delta t}{\delta^{p+3}T^{2}}
    +2p\tau\frac{\Delta t}{\delta^{p+4}T^{4}}+\tau^{2} \frac{\Delta t}{\delta^{p+4}T^{6}}+\tau\frac{\Delta t}{\delta^{p+3}T^{4}}$.
\end{lemma}
\begin{proof}
We use the first-order Taylor formula for  $f:=\theta^{-p}e^{-s\varphi}$ to write
\begin{equation}\label{eq:time:taylor}
    \trp D_{t}(f)=\partial_{t}f+\Delta t\int_{0}^{1}(1-\gamma)\partial_{t}^{2}f(t+\gamma \Delta t)d\gamma.
\end{equation}
Let us focus on the integral remainder of the above expression. We have
\begin{equation*}
\begin{aligned}
    \partial_{t}^{2}f=&(p(p+1)\theta^{-p-2}(\partial_{t}\theta)^{2}-p\theta^{-p-1}\partial_{t}^{2}\theta)e^{-s\varphi}+2\tau\varphi d\theta^{-p-1}(\partial_{t}\theta)^{2}e^{-s\varphi}\\
    &+\theta^{-p}\left[ \tau^{2}\varphi^{2}(\partial_{t}\theta)^{2}e^{-s\varphi}-\tau\partial_{t}^{2}\theta\varphi e^{-s\varphi}\right].
      \end{aligned}
\end{equation*}
We notice that there exists a constant $C>0$, only depending on $p$, such that
\begin{equation}\label{ine:theta:d:-d}
    \max_{t\in [0,T]} \theta^{p}(t)\theta^{-p}(t+\gamma\Delta t)\leq \frac{C}{\delta^{p}}.
\end{equation} Indeed, for $p\in\{0,1,2,\ldots\}$ it follows that
\begin{equation}\label{ine:max:theta}
    \max_{t\in[0,T]}\theta^{-p}(t+\gamma\Delta t)\leq   \frac{T^{2p}(1+2\delta)^{2p}}{4^{p}}\leq T^{2p},
    \end{equation}
    and 
    \begin{equation}\label{ine:theta:d}
    \max_{t\in[0,T]} \theta^{p}(t)\leq \frac{C}{\delta^{p}T^{2p}}.
\end{equation} 
So \eqref{ine:theta:d:-d} holds. And we also have
\begin{equation}\label{ine:theta:d:2}
    \max_{t\in[0,T]}\theta^{(j)}(t)\leq \frac{1}{\delta^{j+1}T^{j+2}},\quad j=0,1,\ldots
\end{equation}
Thus, multiplying \eqref{eq:time:taylor} by $\theta^{p}e^{s\varphi}$, and then applying \eqref{ine:theta:d:-d}-\eqref{ine:theta:d:2} to the result we obtain
\begin{equation}
\begin{aligned}
    \theta^{p}e^{s\varphi}\trp D_{t}(\theta^{-p}e^{-s\varphi})=&\theta^{p}e^{s\varphi}\partial_{t}(\theta^{-p}e^{-s\varphi})+p(p+1)\frac{\Delta t}{\delta^{p+4}T^{2}}\mathcal{O}_{\lambda}(1)+p\frac{\Delta t}{\delta^{p+3}T^{2}}\mathcal{O}_{\lambda}(1)\\
    &+2p\tau\frac{\Delta t}{\delta^{p+4}T^{4}}\mathcal{O}_{\lambda}(1)+\tau^{2} \frac{\Delta t}{\delta^{p+4}T^{6}}\mathcal{O}_{\lambda}(1)+\tau\frac{\Delta t}{\delta^{p+3}T^{4}}\mathcal{O}_{\lambda}(1),
\end{aligned}
\end{equation}
where the conditions $\Delta t\tau(\delta^{2} T^{3})^{-1}\leq 1$ and $\|\varphi\|_{\infty}=\mathcal{O}_{\lambda}(1)$ are used to control the exponential term $e^{\tau(\theta(t)-\theta(t+\gamma\Delta t)\varphi)}=\mathcal{O}_{\lambda}(1)$. Then, by shifting the above expression using $\trm$ the estimate \eqref{eq:theta:D:t} holds.
\end{proof}
\begin{remark}
    Following the same strategy, for $p\geq 1$ and $l\geq 0$ be integers and $s(t)=\tau\theta^{p}(t)$, where $\tau\geq 1$. Provided 
    \begin{equation}\label{eq:hypothesis:general}
    \Delta t\,\tau\,(\delta^{p+1}T^{2p+1})^{-1}\leq 1,
    \end{equation}
    we have
    \begin{equation}\label{eq:theta:D:t:general}
        \trm(\theta^{l}e^{s\varphi})D_{t}(\theta^{-l}e^{-s\varphi})=\trm(\theta^{l}e^{s\varphi}\partial_{t}(\theta^{-l}e^{-s\varphi}))+\mathcal{E}(\Delta t) \mathcal{O}_{\lambda}(1),
    \end{equation}
    where $\mathcal{E}(\Delta t):=\mathcal{E}(\Delta t, p, l,\delta,\tau,T)$ is given by
    \begin{equation*}
    \begin{aligned}
    \mathcal{E}(\Delta t):=\Delta t\bigg(&\frac{l(l+1)}{\delta^{l+4}T^{2}}+\frac{l}{\delta^{l+3}T^{2}}+\frac{C(l,p)\,\tau}{\delta^{l+(p-2)^{+}+4}T^{2p+2}}+\frac{p^{2}\tau^{2}}{\delta^{l+2p+2}T^{4p+2}}\bigg),
    \end{aligned}
    \end{equation*}
    with $C(l,p):=|2lp-p(p-1)|+p$ and $(p-2)^{+}:=\max(p-2,0)$.\\
    Moreover, \eqref{eq:theta:D:t:general} generalizes the estimates \cite[Lemma B.4]{BHS:2020}.
\end{remark}

Thus, we will focus on the Carleman weight function of the form $r(x,t):=e^{s(t)\varphi(x)}$ with $\theta$ defined in \eqref{theta-delta}, $\rho=:r^{-1}$ and $s(t)=\tau\theta(t)$.
In \cite{GC-HS-2021} an asymptotic behavior of the Carleman weight function used for the heat equation is established in the one-dimensional setting with limited order in the space-discrete operators.
Moreover, in \cite[Theorem 5]{LMPZ:2023} this limitation is overcome with respect to the order of the spatial operator, although still in the 1-D case. Thus, our task now is to generalize the aforementioned results to arbitrary dimensions. We emphasize that the methodology of the next result adapts the proof of \cite[Lemma A.16]{GC-HS-2021} and \cite[Theorem 5]{LOP-2020}.
\begin{theorem}\label{theo:fully:weight:estimates}
Let $\alpha$ and $m,n$ be multi-index and two-dimensional multi-indices in the space variable, respectively. Provided $\tau h(\delta T^{2})^{-1}\leq 1$, $\Delta t \tau(T^{3}\delta^{2})^{-1}\leq 1/2$ and $\sigma$ bounded, we have
\begin{equation}\label{eq:prop:time:estimate}
    \begin{aligned}
    D_{t}(r\mathbf{D}_{h}^{n}\mathbf{A}_{h}^{m}(\partial^{\alpha}\rho))=&\partial_{t}(r\partial^{n+\alpha}\rho)+T(sh)^{2}s^{|n+\alpha|}\theta\mathcal{O}_{\lambda}(1)\\
    &+\Delta t \trp((sh)^{2}T^{2}s^{|n+\alpha|}\theta^{2})\mathcal{O}_{\lambda}(1)\\
    &+\Delta t T^{2}\trp(s^{|n+\alpha|}\theta^{2})\mathcal{O}_{\lambda}(1).
\end{aligned}
\end{equation}
\end{theorem}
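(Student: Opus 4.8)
The plan is to reduce everything to a single time-step Taylor expansion combined with the spatial estimates already established. Write $G:=r\,\mathbf{D}_{h}^{n}\mathbf{A}_{h}^{m}(\partial^{\alpha}\rho)$ and recall that the difference and average operators in both space directions commute with one another and with $\partial_{t}$, so that $\mathbf{D}_{h}^{n}\mathbf{A}_{h}^{m}=\mathbf{A}_{h}^{m}\mathbf{D}_{h}^{n}$. Using the definition $D_{t}f=(\trp f-f)/\Delta t$ with $\trp f(t)=f(t+\Delta t)$ and a first-order Taylor expansion with integral remainder in the time variable, one obtains
\begin{equation*}
D_{t}G=\partial_{t}G+\Delta t\int_{0}^{1}(1-\sigma)\,\partial_{t}^{2}G(\cdot\,,t+\sigma\Delta t)\,d\sigma .
\end{equation*}
I would then estimate the two pieces on the right-hand side separately, the first giving the main term and the $O((sh)^{2})$ error at time $t$, and the second producing the two $\Delta t$-scaled terms carrying $\trp$.

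For the first piece, note that $\partial_{t}G=\partial_{t}(r\,\mathbf{A}_{h}^{m}\mathbf{D}_{h}^{n}\partial^{\alpha}\rho)$ is exactly the quantity controlled by \eqref{eq:temporal:estimate} of Theorem \ref{theo:time:derivate} (its hypothesis $\tau h(\delta T^{2})^{-1}\le 1$ is among the present assumptions). That estimate yields the main term $\partial_{t}(r\partial^{n+\alpha}\rho)$ together with three remainders of the form $Ts^{|\alpha+n_{i}|}\theta\mathcal{O}_{\lambda}((sh)^{2})$, $Ts^{|\alpha+n_{j}|}\theta\mathcal{O}_{\lambda}((sh)^{2})$ and $Ts^{|\alpha+n|}\theta\mathcal{O}_{\lambda}((sh)^{2})$; since $s\ge 1$, each is dominated by $T(sh)^{2}s^{|n+\alpha|}\theta\mathcal{O}_{\lambda}(1)$, which is precisely the first error term in \eqref{eq:prop:time:estimate} (evaluated at $t$, hence carrying no $\trp$).

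For the second piece I would first split the spatial part of $G$ by Proposition \ref{prop:weight} (equivalently Corollary \ref{cor:difference:average}), writing $G=r\partial^{n+\alpha}\rho+\mathcal{R}$, where $\mathcal{R}$ collects the spatial remainders of sizes $s^{|n+\alpha|}\mathcal{O}_{\lambda}((sh)^{2})$ and $s^{|n+\alpha|}\mathcal{O}_{\lambda}((sh)^{4})$, built from $\partial^{\gamma}\rho$ evaluated at spatially shifted points. Since $\Delta t$ and the integral act only in time, it remains to estimate $\partial_{t}^{2}(r\partial^{n+\alpha}\rho)$ and $\partial_{t}^{2}\mathcal{R}$ at the shifted time $t+\sigma\Delta t$. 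Here I would invoke the second-order time-derivative estimate (the $\beta=2$ case of \cite[Proposition 2.14]{boyer-2014}, also used for $\beta=1$ inside Theorem \ref{theo:time:derivate}), namely $\partial_{t}^{2}(r\partial^{\gamma}\rho)=T^{2}s^{|\gamma|}\theta^{2}\mathcal{O}_{\lambda}(1)$: each time differentiation produces a factor $T\theta$ coming from $\partial_{t}s=\tau\partial_{t}\theta=Ts\theta\,\mathcal{O}(1)$, while the apparent growth in the power of $s$ is cancelled by the identity $r\rho=1$. Applying this with $\gamma=n+\alpha$ to the main term and with the shifted arguments to $\mathcal{R}$, and multiplying by $\Delta t$, yields the two remaining terms $\Delta t\,T^{2}\trp(s^{|n+\alpha|}\theta^{2})\mathcal{O}_{\lambda}(1)$ and $\Delta t\,\trp((sh)^{2}T^{2}s^{|n+\alpha|}\theta^{2})\mathcal{O}_{\lambda}(1)$.

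The main obstacle is the passage from the genuine Taylor integrand, evaluated at the intermediate times $t+\sigma\Delta t$ with $\sigma\in[0,1]$, to the clean $\trp$-expressions of the statement, evaluated at $t+\Delta t$. This is exactly where the hypothesis $\Delta t\,\tau(T^{3}\delta^{2})^{-1}\le 1/2$ enters: writing $r(t+\sigma\Delta t)/\trp r=e^{\tau[\theta(t+\sigma\Delta t)-\theta(t+\Delta t)]\varphi}$ and using $|\partial_{t}\theta|\le T\theta^{2}\le (T^{3}\delta^{2})^{-1}$, the exponent is bounded by $\varphi/2$, so both $r$ and $\theta$ at any intermediate time are comparable, up to $\lambda$-independent constants, to their values at $t+\Delta t$; this is the temporal analogue of the role of $\tau h(\delta T^{2})^{-1}\le 1$ in the spatial estimates and lets the intermediate-time integrands be absorbed into the $\trp(\cdot)\,\mathcal{O}_{\lambda}(1)$ notation. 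Collecting the four contributions gives \eqref{eq:prop:time:estimate}; the argument adapts the scheme of \cite[Lemma A.16]{GC-HS-2021} and \cite[Theorem 5]{LOP-2020}, now carried out in arbitrary dimension by means of Corollary \ref{cor:difference:average}.
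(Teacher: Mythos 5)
Your proof is correct and takes essentially the same route as the paper: a first-order Taylor expansion in time with integral remainder, the spatial decomposition via Corollary \ref{cor:difference:average}/Proposition \ref{prop:weight}, Theorem \ref{theo:time:derivate} for the first-order time-derivative term, and the second-order time estimates of \cite{GC-HS-2021,boyer-2014} for the remainder; applying Taylor to all of $G$ at once rather than first to the leading spatial piece (with the spatial remainders handled ``mimicking the same steps,'' as the paper does) is only a cosmetic reordering. Your explicit justification of the passage from the intermediate times $t+\sigma\Delta t$ to the $\trp$-expressions via $\Delta t\,\tau(T^{3}\delta^{2})^{-1}\leq 1/2$ fills in a step the paper delegates to the citation --- note only that the resulting comparability constants depend on $\lambda$ through $\varphi=e^{\lambda\psi}$, not ``$\lambda$-independent'' as you wrote, which is harmless since they are absorbed into $\mathcal{O}_{\lambda}(1)$.
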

\begin{proof}
   To prove \eqref{eq:prop:time:estimate} we follow a similar strategy developed in the proof of Theorem \ref{theo:weight:estimates}. Thanks to Corollary \ref{cor:difference:average} we write
\begin{equation}\label{estimate:theo:fully}
\begin{aligned}
    r\mathbf{D}_{h}^{n}\mathbf{A}_{h}^{m}\partial^{\alpha}\rho=& r\partial^{n}(\mathbf{A}_{h}^{m}\partial^{\alpha}\rho)+rR_{D^{n_{j}}_{j}}(\mathbf{A}_{h}^{m}\partial^{\alpha}\rho)+rR_{D^{n_{i
}}_{i}}(\mathbf{A}_{h}^{m}\partial^{\alpha}\rho)\\
&+rR_{\mathbf{D}_{h}^{n}}(\mathbf{A}_{h}^{m}\partial^{\alpha}\rho).
    \end{aligned}
\end{equation}
Notice that the first term from the above right-hand side can be written as
\begin{align*}
r\partial^{n}(\mathbf{A}_{h}^{m}\partial^{\alpha}\rho)=&r\partial^{n+\alpha}\rho+rR_{A_{j}^{m_{j}}}(\partial^{\alpha+n}\rho)+rR_{A_{i}^{m_{i}}}(\partial^{\alpha+n}\rho)+rR_{\mathbf{A}_{h}^{m}}(\partial^{\alpha+n}\rho)
\end{align*}
where we have used Corollary \ref{cor:difference:average}.
Then, the first-order Taylor formula in the time variable yields
\begin{equation}\label{eq:almost:estimate}
\begin{aligned}
    D_{t}(r\partial^{n}(\mathbf{A}_{h}^{m}\partial^{\alpha}\rho))=&\partial_{t}(r\partial^{n+\alpha}\rho)+\partial_{t}(rR_{A_{j}^{m_{j}}}(\partial^{n+\alpha}\rho))\\
    &+\partial_{t}(rR_{A_{i}^{m_{i}}}(\partial^{\alpha+n}\rho))+\partial_{t}(rR_{\mathbf{A}_{h}^{m}}(\partial^{\alpha+n}\rho))\\
    &+\Delta t\int_{0}^{1}(1-\gamma)\partial_{t}^{2}\left( r(x,t+\gamma\Delta t)\partial^{n}\mathbf{A}_{h}^{m}(\partial^{\alpha}\rho(x,t+\gamma\Delta t))\right)d\gamma.
    \end{aligned}
\end{equation}
Theorem \ref{theo:time:derivate} enables to estimate
\begin{equation}\label{eq:estimate:R}
\begin{aligned}
\partial_{t}(rR_{A_{j}^{m_{j}}}(\partial^{n+\alpha}\rho))=&T(sh)^{2}s^{|\alpha|+|n|}\theta\mathcal{O}_{\lambda}(1),\\ \partial_{t}(rR_{A_{i}^{m_{i}}}(\partial^{\alpha}\rho))=&T(sh)^{2}s^{|\alpha|+|n|}\theta\mathcal{O}_{\lambda}(1),
\end{aligned}
\end{equation}
and 
\begin{equation}\label{eq:estimate:R:01}
\partial_{t}(rR_{\mathbf{A}_{h}^{m}}(\partial^{\alpha+n}\rho))=T(sh)^{4}s^{|\alpha|+|n|}\theta\mathcal{O}_{\lambda}(1).
\end{equation}
According to the first estimate from \cite[Lemma A.16]{GC-HS-2021} and Proposition \ref{prop:weight}, the last term from the right-hand side of \eqref{eq:almost:estimate} can be estimated as
\begin{equation}\label{integral:remainder}
\begin{split}
    \int_{0}^{1}(1-\gamma)\partial_{t}^{2}\left( r(x,t+\gamma\Delta t)\mathbf{A}_{h}^{m}(\partial^{\alpha}\rho(x,t+\gamma\Delta t))\right)d\gamma=& T^{2}\trp(s^{|n|+|\alpha|}\theta^{2})\mathcal{O}_{\lambda}(1)\\
    &+ \trp((sh)^{2}T^{2}s^{|n|+|\alpha|}\theta^{2})\mathcal{O}_{\lambda}(1).
    \end{split}
\end{equation}
Hence, combining \eqref{eq:estimate:R}-\eqref{integral:remainder}, for \eqref{eq:almost:estimate} we obtain 
\begin{equation}
\begin{aligned}
    D_{t}(r\partial^{n}(\mathbf{A}_{h}^{m}\partial^{\alpha}\rho))=&\partial_{t}(r\partial^{n+\alpha}\rho)+T(sh)^{2}s^{|n|+|\alpha|}\theta\mathcal{O}_{\lambda}(1)\\
    &+\Delta t \trp((sh)^{2}T^{2}s^{|n|+|\alpha|}\theta^{2})\mathcal{O}_{\lambda}(1)\\
    &+\Delta t T^{2}\trp(s^{|n|+|\alpha|}\theta^{2})\mathcal{O}_{\lambda}(1).
    \end{aligned}
\end{equation}

Mimicking the same steps as before, it follows the estimates for the time-discrete operaror applied on the remainder terms of \eqref{estimate:theo:fully}; which yields \eqref{eq:prop:time:estimate}. 
\end{proof}

\section{Carleman estimate}\label{sec:Carleman}


\par The assumption on the Carleman weight function is standard for this problem, since have been used in \cite{boyer-2014,LPP:2026,LLRP:2025,BHLR:2010,LPP:2026b}.\\ \textbf{Assumption:} Let $\overline{\omega_0}\subset \omega$ be an arbitrary fixed subdomain of $\Omega$. Let $\tilde{\Omega}$ be a smooth, open, and connected neighborhood of $\overline{\Omega}$ in $\mathbb{R}^d$. The function $x \mapsto \psi(x)$ belongs to $\mathcal{C}^p(\tilde{\Omega}, \mathbb{R})$, for sufficiently large $p$, and satisfies, for some $c > 0$,
\begin{equation}\label{assumtion:psi}
\psi > 0 \quad \text{in } \widehat{\Omega}, \quad |\nabla \psi| \geq c \quad \text{in } \tilde{\Omega} \setminus \omega_0, \quad \text{and} \quad \partial_{n_i} \psi(x) \leq -c < 0 \quad \text{for } x \in V_{\partial_i \Omega},
\end{equation}
where $V_{\partial_i \Omega}$ is a sufficiently small neighborhood of $\partial_i \Omega$ in $\tilde{\Omega}$, where the outward unit normal $n_i$ to $\Omega$  extends from $\partial_i \Omega$. 
For $\lambda\geq 1$ and $K>\|\psi\|_{\infty}$, we introduce the functions
\begin{align}\label{funcion-peso-2}
\varphi(x)=e^{\lambda\psi(x)}-e^{\lambda K}<0,
\end{align}
 and, for $0<\delta \leq 1/2$,
\begin{equation}
    \theta(t)=\frac{1}{(t+\delta T)(T+\delta T-t)},\quad t\in [0,T].
\end{equation}
Given $\tau\geq 1$, we set
\begin{equation}\label{eq1}
s(t)=\tau\theta(t).
\end{equation}
Applying the asymptotic behavior of the Carleman weight functions given by Theorems \ref{theo:weight:estimates} and \ref{theo:fully:weight:estimates} and following the strategies used in \cite{boyer-2014,BHS:2020,GC-HS-2021,LMPZ:2023,LLRP:2025} we prove a fully-discrete Carleman estimate in arbitrary dimensions. Indeed, denoting  $\mathcal{P}q:=D_{t}q+\sum_{i=1}^{d}D_{i}(\gamma_{i}D_{i}q)$ and 
\begin{equation*}
\begin{aligned}
\mathcal{J}(q):=&\int_{\mathcal{M}\times\mathcal{N}}\trm(s^{-1}r^{2})|D_{t}q|^{2}+\sum_{i=1}^{d}\int_{\mathcal{M}\times\mathcal{N}^{\ast}}s^{-1}r^{2}|D_{i}(\gamma_{i}D_{i}q)|^{2}+\sum_{i=1}^{d}\int_{\mathcal{M}_{i}^{\ast}\times\mathcal{N}^{\ast}}s\gamma_{i}\phi\,r^{2}|D_{i}q|^{2}\\
&+\int_{\mathcal{M}\times\mathcal{N}^{\ast}}s^{3}r^{2}\phi^{3}|\nabla_{\Gamma}\psi|^{4}|q|^{2}+\sum_{i=1}^{d}\int_{\mathcal{M}_{i}^{\ast}\times\mathcal{N}^{\ast}}s\gamma_{i}\phi\,r^{2}|A_{i}D_{i}q|^{2}
\end{aligned}
\end{equation*}
we have
\begin{theorem}\label{thm:Carleman:q}
Let $\omega$ be an open subset of $\mathcal{M}$, $\omega_{0}\Subset \omega$ a nonempty open set, $\psi$ satisfying $|\nabla\psi|\geq c>0$ in $\Omega\setminus \overline{\omega}_{0}$, and $\varphi=e^{\lambda\psi}-e^{\lambda K}<0$. For $\lambda\geq 1$ sufficiently large, there exist $C>0$, $\tau_{0}\geq 1$, $\epsilon_{0}>0$ depending on $\omega$, $\omega_{0}$, $T$, $\lambda$, $\Gamma$ such that
\begin{equation}\label{eq:Carleman:q}
\begin{aligned}
&\mathcal{J}(q)+\sum_{i=1}^{d}\int_{\partial_{i}\mathcal{M}\times\mathcal{N}^{\ast}}s\gamma_{i}\phi r^{2}\,t_{r}^{i}(|D_{i}q|^{2})+\sum_{i=1}^{d}\int_{\partial_{i}\mathcal{M}\times\mathcal{N}^{\ast}}s^{3}r^{2}\phi^{3}t_{r}^{i}(A_{i}|q|^{2})+\int_{\partial\mathcal{M}\times\mathcal{N}}(\trm r)^{2}|D_{t}q|^{2}\\
&\leq C_{\lambda}\bigg(\int_{\mathcal{M}\times\mathcal{N}}(\trm r)^{2}|\mathcal{P}q|^{2}+\int_{\omega\times\mathcal{N}^{\ast}}s^{3}r^{2}\phi^{3}|q|^{2}+h^{-2}\int_{\partial\mathcal{M}\times\partial\mathcal{N}}|\trp(rq)|^{2}+h^{-2}\int_{\mathcal{M}\times\partial\mathcal{N}}|\trp(rq)|^{2}\bigg),
\end{aligned}
\end{equation}
for all $\tau\geq\tau_{0}(T+T^{2})$, $0<h<h_{0}$, $\Delta t>0$ and $0<\delta\leq 1/2$ such that
\begin{equation}\label{eq:CFL:conditions}
\frac{\tau h}{\delta T^{2}}\leq\epsilon_{0}\quad\text{and}\quad\frac{\tau^{4}\Delta t}{\delta^{4}T^{6}}\leq\epsilon_{0}.
\end{equation}
\end{theorem}
This Carleman estimate implies an relaxed observability inequality for the system \eqref{eq:adjoint}.
\begin{theorem}\label{thm:observability}
Let $T>0$, $\mu\geq 1$, and $h_{0}>0$ given by Theorem~\ref{thm:Carleman:q}. There exist constants $C_{0},C,C_{1},C_{\mathrm{obs}}>0$ such that for all $ q_{T}\in L^{2}_{h}(\mathcal{M})$, the solution $q\in C(\overline{\mathcal{M}}\times\overline{\mathcal{N}}^{\ast})$ of \eqref{eq:adjoint} satisfies
\begin{equation}\label{eq:observability}
\|\trp q(0)\|^{2}_{L^{2}_{h}(\mathcal{M})}\leq C_{\mathrm{obs}}\bigg(\int_{\omega\times\mathcal{N}^{\ast}}|q|^{2}+e^{-\frac{C_{1}}{h^{\min\{\mu/4,1\}}}}\|q_{T}\|^{2}_{L^{2}_{h}(\mathcal{M})}\bigg),
\end{equation}
provided $h\leq\min\{h_{0},h_{1}\}$ and $\Delta t\leq\min\{T^{-2}h^{\mu},(4\|a\|_{L^{\infty}_{h}})^{-1}\}$, where
\begin{equation*}
h_{1}:=C_{0}\bigg(1+\frac{1}{T}+\|a\|_{L^{\infty}_{h}}^{2/3}\bigg)^{-\max\{1,4/\mu\}}\quad\text{and}\quad C_{\mathrm{obs}}:=e^{C\big(1+\frac{1}{T}+\|a\|_{L^{\infty}_{h}}^{2/3}+T\|a\|_{L^{\infty}_{h}}\big)}.
\end{equation*}
\end{theorem}
Its proof mimics the steps developed in \cite[section 4]{LMPZ:2023}. Indeed, let $q$ be a solution of \eqref{eq:adjoint}. Then, for all $t\in\mathcal{N}$,
\begin{equation}\label{eq:energy:monotonicity}
\|\trp q(0)\|^{2}_{L^{2}_{h}(\mathcal{M})}\leq\|\trp q(t)\|^{2}_{L^{2}_{h}(\mathcal{M})},
\end{equation}
provided $\Delta t\,\|a\|_{L_{h}^{\infty}}<1/2$. Inequality follows applying the discrete Gronwall inequality (Lemma~2 from \cite{LMPZ:2023}) with $\gamma=2\|a\|_{L_{h}^{\infty}}$ and provided $2\|a\|_{L_{h}^{\infty}}\,\Delta t<1$. Moreover, defining $\eta(t)=1$ for $t\in\mathcal{N}^{\ast}\cap(T/4,3T/4)$ and $\eta(t)=0$ otherwise, applying the Carleman estimate \eqref{eq:Carleman:q} and using \eqref{eq:energy:monotonicity} yield 
\begin{equation}\label{eq:initial:from:energy}
\|\trp q(0)\|^{2}_{L^{2}_{h}(\mathcal{M})}\leq\frac{C}{T}\int_{\mathcal{N}^{\ast}}\eta(t)\|\trp q(t)\|^{2}_{L^{2}_{h}(\mathcal{M})}\leq\frac{C}{T}\int_{\mathcal{M}\times\mathcal{N}^{\ast}}\eta(t)|q|^{2}.
\end{equation}
Implying
\begin{equation}\label{eq:LHS:controls:initial}
\tau^{3}\int_{\mathcal{M}\times\mathcal{N}^{\ast}}\theta^{3}e^{2\tau\theta\varphi}\phi^{3}|q|^{2}\geq C_{T}\,e^{-\frac{C\tau}{T^{2}}}\|\trp q(0)\|^{2}_{L^{2}_{h}(\mathcal{M})}.
\end{equation}
For the boundary terms one can prove
\begin{equation}\label{eq:time:boundary:bound}
h^{-2}\int_{\mathcal{M}\times\partial\mathcal{N}}|\trp(rq)|^{2}\leq C\,h^{-2}\,e^{-\frac{4\tau k_{0}}{\delta T^{2}}+C\|a\|_{L_{h}^{\infty}}T}\|\trp q(T)\|^{2}_{L^{2}_{h}(\mathcal{M})}.
\end{equation}
Since $\theta(0)=\theta(T)=\frac{1}{\delta T(T+\delta T)}\leq\frac{2}{\delta T^{2}}$ (for $\delta\leq 1/2$), and $\varphi<0$ with $k_{0}:=\min(-\varphi)>0$:
\begin{equation*}
e^{2\tau\theta(0)\varphi}\leq e^{-\frac{4\tau k_{0}}{\delta T^{2}}},\qquad e^{2\tau\theta(T)\varphi}\leq e^{-\frac{4\tau k_{0}}{\delta T^{2}}}.
\end{equation*}
\section{Concluding remarks and perspectives}\label{sec:concluding}
In this work, we established the asymptotic behavior of the Carleman weight functions in arbitrary dimensions for spatial and time discrete operators, which are fundamental tools in developing Carleman estimates for semi-discrete and fully discrete operators. Recent works, such as \cite{GC-HS-2021} and \cite{LMPZ:2023}, have used Carleman estimates to analyze the controllability of fully discrete 1-D heat equations with Dirichlet and dynamic boundary conditions respectively. Hence, this work extends the results to arbitrary dimensions presented in \cite{GC-HS-2021}. For this reason it seems promising using the result presented in this paper as Theorems \ref{theo:weight:estimates} and \ref{theo:fully:weight:estimates} to extend the results obtained in \cite{LMPZ:2023}. Moreover, as is mentioned in remark \ref{rmk:1}, the asymptotic behavior also holds for the weight functions utilized in Carleman estimate for semi-discrete hyperbolic operators then a similar approach as the inverse problems could be adapted into arbitrary dimensions for these semi-discrete operators.
\par In a similar direction, another unexplored research topic where Carleman estimates are fundamental is the discrete or semi-discrete inverse problem for parabolic operators. In recent years, results for semi-discrete hyperbolic operators \cite{BEO-2015,BE-2013} have provided stability estimates for inverse problems. Nevertheless, there is only one stability estimate results for semi-discrete deterministic parabolic operators \cite{LLRP:2025}. Further, \cite{Wu_2024} considers an inverse problem for a semi-discrete stochastic parabolic equation in the one-dimensional setting. Thus, the results presented in this work are important tools for studying those semi-discrete inverse problems, as Carleman estimates are the main tool to achieving these results, see remark \ref{rmk:1}. Indeed, in \cite{LPP:2026b} are obtained to three Carleman estimates for semi-discrete stochastic parabolic operator in arbitrary dimension by using Theorems \ref{theo:weight:estimates} and \ref{theo:fully:weight:estimates}. These Carleman estimates are applied to study inverse random source and Cauchy problems, see \cite[Theorem 1.1]{LPP:2026b} and \cite[Theorem 1.3]{LPP:2026b}.
\par Recently, several works have been considered the controllability problems for semi-discrete stochastic partial differential equations. In \cite{zhao:2024}, \cite{WZ:2025} and \cite{WZ:2024} is established the $\phi$-null controllability of second and fourth-order semi-discrete stochastic parabolic operators, (see \cite{boyer-canum} for details of $\phi$-null controllability for semi-discrete systems). Moreover, in \cite{LL:2026} the authors obtain $\phi$-null controllability resutls for a cascade system of backward stochastic semi-discrete fourth- and second-order parabolic. Concerning the controllability issue, the task is to establish a relaxed observability inequality through a  Carleman estimate for the respective semi-discrete backward adjoint system, then to obtain $\phi$-null controllability result for its respective forward semi-discrete stochastic system. Using this methodology, it is obtained in \cite{zhao:2024}, \cite{WZ:2025} and  \cite{WZ:2024} controllability results for a spatial semi-discrete approximation of a 1-D stochastic parabolic operators. It is worth noting that spatial terms are key to obtaining the Carleman estimates present in those works. For this reason, the estimates related to the Carleman weight function proved in the present paper are fundamental in extending the aforementioned results to arbitrary dimensions. Moreover, due to the difficulty of managing the estimation on the Carleman weight function, there are only two works concerning a higher order operator in 1-D, see \cite{CLTP-2022} and \cite{WZ:2024} for controllability results of a deterministic and stochastic fourth-order operator, respectively. This difficulties, by using Theorems \ref{theo:weight:estimates}, are overcome in \cite{LPP:2026} where the authors obtain $\phi$-controllability result for semi-discrete stochastic parabolic operators in arbitrary dimension which generalize the results obtained in \cite{zhao:2024}.
\par In turn, in \cite{ZY-2024} is obtained a stability estimate of the discrete Calderon's problem with partial data measured on an arbitrary portion of the boundary where a suitable Carleman estimate is the main tool. This is an improvement of the stability result presented in \cite{LDOP-2021} and also needs to estimate repeated application of the discrete spatial operator on the Carleman weight function to obtain a weaker version, with respect to its continuous framework, of the unique continuation property. In the same direction, in \cite{FB:R:R-2021} is obtained a discrete version of the three spheres inequality, and in \cite{FB:R:R:2024} is studied the unique continuation properties of the discrete fractional Laplacian operator both of them via discrete Carleman estimate where their approach mimics the methodology from \cite{ervedoza-2011}. Hence several computations related with trigonometric identities are needed in the development of the discrete Carleman estimate in the works \cite{FB:R:R-2021,ervedoza-2011,LDOP-2021}. Thus, the estimates from Proposition \ref{prop:weight} and Theorem \ref{theo:weight:estimates} give an alternative way to develop of Carleman estimates for discrete elliptic operators. 

\section*{Acknowledgment}
This work was partially supported by the Vicerrector\'ia de Investigaci\'on y Postgrado at Universidad del B\'io-B\'io, project IN2450902; and FONDECYT Grant 11250805. Part of this work was completed during the author's research stay at Escuela de Matem\'aticas, Universidad Industrial de Santander, Colombia. The author extends heartfelt thanks to Professor Juan L\'opez-R\'ios for his kind hospitality.

\section{Proof of the Carleman estimate}\label{sec:proof:carleman}
\subsection{Conjugated operator}
For simplicity of notation we write $r:=e^{s\varphi}$ and $\rho:=r^{-1}$, where our weight function is defined as $\varphi=e^{\lambda\psi}-e^{\lambda K}$, for $s\geq 1$, with $\psi\in C^{k}$ for $k$ sufficiently large and $\lambda\geq 1$. The proof of some technical results can be found in the section 5. 
\par By regarding this weight function, the first step is to consider the change of variable $q=\rho z$. Our first task is to split the conjugate operator $\trm r\mathcal{P}(\rho z):=-\trm rD_{t}(\rho z)-\trm r\sum_{i=1}^{d}\trm D_{i}\left(\gamma_{i}D_{i}(\rho z)\right)$ into simple terms that we will estimate separately. By \eqref{eq:discreteleibniz}, we have
 \begin{equation}\label{eq:conjugate}
\begin{split}
\trm r\sum_{i=1}^{d}\trm D_{i}\left(\gamma_{i}D_{i}(\rho z)\right)=&A_{1}z+A_{2}z+B_{1}z+C_{h}z,
\end{split}
\end{equation}
where $\displaystyle A_{1}z:=\sum_{i=1}^{d}\trm(rA_{i}^{2}\rho\,D_{i}(\gamma_{i}D_{i}z))$, $\displaystyle B_{1}z:=2\sum_{i=1}^{d}\gamma_{i}\trm(rD_{i}A_{i}\rho D_{i}A_{i}z)$,\\ $\displaystyle A_{2}z:=\sum_{i=1}^{d}\gamma_{i}\trm(rD_{i}^{2}\rho A_{i}^{2}z),$
and
\begin{equation*}
    \begin{split}
        C_{h}z:=&\sum_{i=1}^{d}h_{i}\mathcal{O}(1)\trm(rD_{i}^{2}\rho A_{i}^{2}z)+\sum_{i=1}^{d}h_{i}\mathcal{O}(1)\trm(rD_{i}A_{i}\rho D_{i}A_{i}z)+\sum_{i=1}^{d}D_{i}\gamma_{i}\trm(A_{i}D_{i}\rho A_{i}^{2}z)\\
        &+\sum_{i=1}^{d}\frac{h^{2}_{i}}{4}D_{i}\gamma_{i}\trm(D_{i}^{2}\rho D_{i}A_{i}z)+\sum_{i=1}^{d}\frac{h^{2}_{i}}{4}\trm(D_{i}A_{i}\rho D_{i}^{2}z).
    \end{split}
\end{equation*}
Now, using that $\trp z=\trm z+\Delta t D_{t}z$ and Lemma \ref{lem:time:discrete:weight} with $p=0$, provided $\Delta t \tau (T^3 \delta^2)^{-1}\leq 1$ it follows that
\begin{equation}\label{eq:time:conjugate}
\begin{split}
    \trm(r)\,D_{t}(\rho z)
    =&D_{t}z-\tau\varphi\trm(\theta'z)+C_{\Delta t}z,
    \end{split}
\end{equation}
with $C_{\Delta t}z:=\Delta t\left(\frac{\tau}{\delta^{3}T^{4}}+\frac{\tau^{2}}{\delta^{4}T^{6}} \mathcal{O}_{\lambda}(1)\right)\trp z+\tau\Delta t\trm(\theta')\varphi D_{t}z$. Now, we set $A_{3}z:=-\tau\varphi\trm(\theta'z)$, $B_{2}z:=-2\trm(s\Delta_{\Gamma}\phi z)$ and $B_{3}z:=D_{t}z$, where $\Delta_{\Gamma}\phi=\sum_{i=1}^{d}\gamma_{i}\partial_{x_{i}}^{2}\phi$. Thus, combining \eqref{eq:conjugate} and \eqref{eq:time:conjugate} we have the following identity in $\mathcal{M}\times\mathcal{N}$
\begin{equation}\label{eq:innner:product}
    Az+Bz=Rz,
\end{equation}
where $Az=A_{1}z+A_{2}z+A_{3}z$, $Bz:=B_{1}z+B_{2}z+B_{3}z$ and $Rz:=-\trm r\mathcal{P}(\rho z)-C_{h}z-C_{\Delta t}z+B_{2}z$. Hence, we take $L_{h}^{2}(\mathcal{M}\times\mathcal{N})$ in \eqref{eq:innner:product} yields 
\begin{equation}\label{eq:inner:product:2}
    \left\| Az\right\|^{2}+\left\| Bz\right\|^{2}+2\int_{\mathcal{M}\times\mathcal{N}}Az\cdot Bz=\left\| Rz\right\|^{2}.
\end{equation}
The main task to obtain a Carleman estimate is to estimate the following term 
\begin{equation}\label{conmu}
    \int_{\mathcal{M}\times\mathcal{N}}Az\cdot B z=\sum_{i,j=1}^{3}\int_{\mathcal{M}\times\mathcal{N}}A_{i}z\cdot B_{j}z=:\sum_{i,j=1}^{3}I_{ij}. 
\end{equation}
 For every term of (\ref{conmu}) we present its respective estimate in section \ref{sec:cross}. Collecting all these estimation, we get
 \begin{equation}\label{eq:combined:recall}
\begin{aligned}
2\int_{\mathcal{M}\times\mathcal{N}}Az\cdot Bz\geq\;&s^{3}\lambda^{4}\int_{\mathcal{M}\times\mathcal{N}^{\ast}}\phi^{3}|\nabla_{\Gamma}\psi|^{4}\,|z|^{2}\\
&+s\lambda^{2}\sum_{i=1}^{d}\int_{\mathcal{M}_{i}^{\ast}\times\mathcal{N}^{\ast}}\gamma_{i}\phi\Big[2\gamma_{i}(\partial_{i}\psi)^{2}+|\nabla_{\Gamma}\psi|^{2}\Big]|D_{i}z|^{2}\\
&-2s\lambda\sum_{i=1}^{d}\int_{\mathcal{M}_{i}^{\ast}\times\mathcal{N}^{\ast}}\gamma_{i}\phi\,\partial_{i}\psi\,\partial_{i}\gamma_{i}\,|D_{i}z|^{2}\\
&+C_{\lambda}\Delta t\sum_{i=1}^{d}\int_{\mathcal{M}_{i}^{\ast}\times\mathcal{N}}\gamma_{i}\,|D_{it}^{2}z|^{2}+\sum_{i\ne j}V_{11}^{ij}-\mathcal{X}-\mathcal{Y},
\end{aligned}
\end{equation}
where $\mathcal{X}$ and $\mathcal{Y}$ are sub-leading terms that can be absorbed increasing the parameters $\lambda$ and $\tau$ if is necessary. At this point the strategy to follows is from \cite[Lemma 2.12]{GC-HS-2021}. To remove the local term $|D_{i}z|^{2}$ from the left-hand side it follows from \cite[Lemma 2.16]{GC-HS-2021}. Moreover, returning to the original variable mimics \cite[Lemma 9]{LMPZ:2023}, and to incorporate the second-order spatial operator we apply the steps used in \cite[Theorem 1.2]{LLRP:2025}.
\section{Estimate for the cross-product}\label{sec:cross}
In this section we provided all the estimate to obtain a bound for the cross-product $\int_{\mathcal{M}\times\mathcal{N}}Az\cdot Bz$. We begin recalling the  definition of the respective $I_{ij}$ for $i,j=1,2,3$, then usually a time-shifting allows to integrate on $\mathcal{M}\times\mathcal{N}^{\ast}$. In higher dimension this is the hardest part in the development of a semi(fully)-discrete Carleman estimate, since in every estimate of $I_{ij}$ is needed to obtain an asymptotic behavior of the Carleman weight function, then several additional estimation are needed to obtain a bound for $I_{ij}$. For this reason, it becomes easier by using Theorem \ref{theo:weight:estimates} and \ref{theo:fully:weight:estimates} since they contain all the possible configuration to be estimated in the proof of a Carleman estimate. Thus, these Theorems not only reduce the computation also enable one to obtain Carleman estimate in higher-dimensions and higher-order. At the end of each section we give a remark about the role of the respective estimate in the Carleman estimate .
\subsection{Estimate of $I_{11}$ }
We set $\beta_{11}^{ij}:=\gamma_{j}r^{2}A_{i}^{2}\rho D_{j}A_{j}\rho$, then recalling that
\begin{equation*}
    \begin{split}
        A_{1}z&:=\sum_{i=1}^{d}\trm(rA_{i}^{2}\rho\,D_{i}(\gamma_{i}D_{i}z))\, \text{ and }\,B_{1}z:=2\sum_{i=1}^{d}\gamma_{i}\trm(rD_{i}A_{i}\rho D_{i}A_{i}z),
    \end{split}
\end{equation*}
we have after a shift with respect to the discrete-time variable
\begin{equation*}
    \begin{split}
     I_{11}=\sum_{i,j= 1}^{d}2\int_{\mathcal{M}\times\mathcal{N}} \trm(\beta_{11}^{ij}D_{i}(\gamma_{i}D_{i}z)\,D_{j}A_{j}z)=&\sum_{i,j=1}^{d}2\int_{\mathcal{M}\times\mathcal{N}^{\ast}} \beta_{11}^{ij}D_{i}(\gamma_{i}D_{i}z)\,D_{j}A_{j}z\\
    =:&\sum_{i,j=1}^{d}I_{11}^{ij}.
    \end{split}
\end{equation*}
\par Let us first consider the case $i=j$. Using the product rule \eqref{D:product} we have
\begin{equation*}
\begin{split}
    I_{11}^{ii}=&2\int_{\mathcal{M}\times\mathcal{N}^{\ast}} \beta_{11}^{ii}D_{i}\gamma_{i}|D_{i}A_{i}z|^{2}+\beta_{11}^{ii}A_{i}\gamma_{i}D_{i}^{2}zD_{i}A_{i}z\\
    =&2\int_{\mathcal{M}\times\mathcal{N}^{\ast}} \beta_{11}^{ii}D_{i}\gamma_{i}|D_{i}A_{i}z|^{2}+\int_{\mathcal{M}\times\mathcal{N}^{\ast}} \beta_{11}^{ii}A_{i}\gamma_{i}D_{i}(|D_{i}z|^{2})\\
    =:&I_{11}^{ii(a)}+I_{11}^{ii(b)}.
    \end{split}
\end{equation*}
Applying a discrete integration by parts with respect to the difference operator $D_{i}$ given by \cite[Lemma 2.2, equation (15)]{LDOP-2021} on the integral $I_{11}^{ii(b)}$, it follows that
\begin{equation*}
    I_{11}^{ii(b)}=-\int_{\mathcal{M}^{\ast}_{i}\times\mathcal{N}^{\ast}} D_{i}(\beta_{11}^{ii}A_{i}\gamma_{i})|D_{i}z|^{2}+\int_{\partial_{i}\mathcal{M}\times\mathcal{N}^{\ast}}\beta_{11}^{ii}A_{i}\gamma_{i}t_{r}^{i}(|D_{i}z|^{2})n_{i}.
\end{equation*}
Now, using Theorem \ref{theo:weight:estimates} we estimate $\beta_{11}^{ii}=-    s\lambda\gamma_{i}\phi\partial_{i}\psi+s\mathcal{O}_{\lambda}((sh)^{2})$ and $D_{i}(\beta_{11}^{ii}A_{i}\gamma_{i})=-s\lambda^{2}\gamma_{i}^{2}\phi(\partial_{i}\psi)^{2}+s\lambda\mathcal{O}_{\lambda}(1)+s\mathcal{O}_{\lambda}((sh)^{2})$. Then for $I_{11}^{ii(b)}$ we obtain
\begin{equation}\label{eq:J11b}
\begin{aligned}
I_{11}^{ii(b)}=&\int_{\mathcal{M}^{\ast}_{i}\times\mathcal{N}^{\ast}}\left(s\lambda^{2}\gamma_{i}^{2}\phi(\partial_{i}\psi)^{2}+s\lambda\mathcal{O}_{\lambda}(1)+s\mathcal{O}_{\lambda}((sh)^{2})\right)|D_{i}z|^{2}\\
&+\int_{\partial_{i}\mathcal{M}\times\mathcal{N}^{\ast}}(-    s\lambda\gamma_{i}\phi\partial_{i}\psi+s\mathcal{O}_{\lambda}((sh)^{2}))t_{r}^{i}(|D_{i}z|^{2})n_{i}.
\end{aligned}
\end{equation}
In turn, using the identity $A_{i}(|D_{i}z|^{2})=|D_{i}A_{i}z|^{2}+\frac{h^{2}}{4}|D_{i} ^{2}z|^{2}$ for $i\in\{1,\ldots,d\}$, we can rewrite the integral $I_{11}^{ii(a)}$ as
\begin{equation*}
    I_{11}^{ii(a)}=2\int_{\mathcal{M}\times\mathcal{N}^{\ast}}\beta_{11}^{ii}D_{i}\gamma_{i}A_{i}(|D_{i}z|^{2})-\frac{h^{2}}{2}\int_{\mathcal{M}\times\mathcal{N}^{\ast}}\beta_{11}^{ii}D_{i}\gamma_{i}|D_{i}^{2}z|^{2}.
\end{equation*}
    Then, applying \cite[Lemma 2.2, equation (16)]{LDOP-2021} on the first integral above yields
\begin{equation}
    I_{11}^{ii(a)}=2\int_{\mathcal{M}^{\ast}_{i}\times\mathcal{N}^{\ast}}A_{i}(\beta_{11}^{ii}D_{i}\gamma_{i})|D_{i}z|^{2}+h_{i}\int_{\partial_{i}\mathcal{M}\times\mathcal{N}^{\ast}}\beta_{11}^{ii}D_{i}\gamma_{i}t_{r}^{i}(|D_{i}z|^{2})-\frac{h_{i}^{2}}{2}\int_{\mathcal{M}\times\mathcal{N}^{\ast}}\beta_{11}^{ii}D_{i}\gamma_{i}|D^{2}_{i}z|^{2}.
\end{equation}
 Thus, by using Theorem \ref{theo:weight:estimates}, for $I_{11}^{ii(a)}$ we have 
\begin{equation}\label{eq:I11:ii:a:final}
\begin{aligned}
I_{11}^{ii(a)}=&\;-2\lambda\int_{\mathcal{M}^{\ast}_{i}\times\mathcal{N}^{\ast}}s
\phi\,\partial_{i}\psi\gamma_{i}\,\partial_{i}\gamma_{i}\,|D_{i}z|^{2}
+
\int_{\mathcal{M}^{\ast}_{i}\times\mathcal{N}^{\ast}}s\,\mathcal{O}_{\lambda}((sh)^{2})|D_{i}z|^{2}\\
&+
\int_{\partial_{i}\mathcal{M}\times\mathcal{N}^{\ast}}
\mathcal{O}_{\lambda}(sh)t_{r}^{i}(|D_{i}z|^{2})
+h\,
\int_{\mathcal{M}\times\mathcal{N}^{\ast}}\mathcal{O}_{\lambda}(sh)|D_{i}^{2}z|^{2}.
\end{aligned}
\end{equation}
 Hence, collecting \eqref{eq:J11b} and \eqref{eq:I11:ii:a:final} we get
\begin{equation}
    \begin{aligned}
I_{11}^{ii}=&\int_{\mathcal{M}^{\ast}_{i}\times\mathcal{N}^{\ast}}\left(s\lambda^{2}\gamma_{i}^{2}\phi(\partial_{i}\psi)^{2}-2\lambda s
\phi\,\partial_{i}\psi\gamma_{i}\,\partial_{i}\gamma_{i}+s\lambda\mathcal{O}_{\lambda}(1)+s\mathcal{O}_{\lambda}((sh)^{2})\right)|D_{i}z|^{2}\\
&+\int_{\partial_{i}\mathcal{M}\times\mathcal{N}^{\ast}}(-    s\lambda\gamma_{i}^{2}\phi\partial_{i}\psi+s\mathcal{O}_{\lambda}((sh)^{2}))t_{r}^{i}(|D_{i}z|^{2})n_{i}+h\int_{\mathcal{M}\times\mathcal{N}^{\ast}}\mathcal{O}_{\lambda}(sh)|D_{i}^{2}z|^{2}.
\end{aligned}
\end{equation}
\par Let us focus in the case $i\ne j$. Applying the discrete integration by parts \cite[Lemma 2.2, equation (15)]{LDOP-2021}, we obtain 
\begin{equation*}  I_{11}^{ij}=-2\int_{\mathcal{M}_{i}^{\ast}\times\mathcal{N}^{\ast}}D_{i}( \beta_{11}^{ij}\,D_{j}A_{j}z)\gamma_{i}D_{i}z,
\end{equation*}
since $z=0$ on $\partial\mathcal{M}$ implies $D_{j}A_{j}z=0$ on $\partial_{i}\mathcal{M}$. Now, by equation \eqref{D:product} we write
\begin{equation}
\begin{split}
    I_{11}^{ij}=&-2\int_{\mathcal{M}_{i}^{\ast}\times\mathcal{N}^{\ast}}\gamma_{i}D_{i}( \beta_{11}^{ij})\,D_{j}A_{ij}^{2}z\,D_{i}z-2\int_{\mathcal{M}_{i}^{\ast}\times\mathcal{N}^{\ast}}\gamma_{i}A_{i}(\beta_{11}^{ij})\,D_{ij}^{2}A_{j}z\,D_{i}z=:-I_{11}^{ij(a)}-I_{11}^{ij(b)}.
    \end{split}
\end{equation}
We can integrate by parts with respect to the average operator $A_{i}$ on $I_{11}^{ij(a)}$(\cite[Lemma 2.2, equation (16)]{LDOP-2021}) to obtain
\begin{equation}\label{eq:non:diagonal:I_{11a}}
\begin{split}
    I_{11}^{ij(a)}=&2\int_{\mathcal{M}\times\mathcal{N}^{\ast}}A_{i}(\gamma_{i}D_{i} \beta_{11}^{ij}\,D_{i}z)\,D_{j}A_{j}z\\
    =&2\int_{\mathcal{M}\times\mathcal{N}^{\ast}}A_{i}(\gamma_{i}D_{i} \beta_{11}^{ij})D_{i}A_{i}z\,D_{j}A_{j}z+\frac{h^{2}}{2}\int_{\mathcal{M}\times\mathcal{N}^{\ast}}D_{i}(\gamma_{i}D_{i}\beta_{11}^{ij})D_{i}^{2}zD_{j}A_{j}z.
\end{split}
\end{equation}
where we have used that $D_{j}A_{j}z=0$ on $\partial_{i}\mathcal{M}$ and the identity \eqref{A:product}.
\par Similarly, for $I_{11}^{ij(b)}$, a discrete integration by parts with respect to the average operator $A_{j}$ and product rule given by \eqref{D:product} yield
\begin{equation*}
\begin{split}
    I_{11}^{ij(b)}=&2\int_{\overline{\mathcal{M}}_{ij}\times\mathcal{N}^{\ast}}A_{j}(\gamma_{i}A_{i}\beta_{11}^{ij}\,D_{i}z)\,D_{ij}^{2}z\\
    =&2\int_{\overline{\mathcal{M}}_{ij}\times\mathcal{N}^{\ast}}A_{j}(\gamma_{i}A_{i}\beta_{11}^{ij})\,A_{j}D_{i}z\,D_{ij}^{2}z+\frac{h^{2}}{2}\int_{\overline{\mathcal{M}}_{ij}\times\mathcal{N}^{\ast}}D_{j}(\gamma_{i}A_{i}\beta_{11}^{ij})\,|D_{ij}^{2}z|^{2}.
    \end{split}
\end{equation*}
Now, by virtue of \eqref{D:product} we have $D_{j}\left( |D_{i}z|^{2}\right)=2A_{j}D_{i}z\,D_{ij}^{2}z$, then we can apply a discrete integration by parts with respect to the difference operator $D_{j}$ on the first term from $I_{11}^{ij(b)}$ to get
\begin{equation}\label{eq:non:diagonal:I_{11b}}
\begin{split}
I_{11}^{ij(b)}=&\int_{\overline{\mathcal{M}}_{ij}\times\mathcal{N}^{\ast}}A_{j}(\gamma_{i}A_{i}\beta_{11}^{ij})\,D_{j}\left( |D_{i}z|^{2}\right)+\frac{h^{2}}{2}\int_{\overline{\mathcal{M}}_{ij}\times\mathcal{N}^{\ast}}D_{j}(\gamma_{i}A_{i}\beta_{11}^{ij})\,|D_{ij}^{2}z|^{2}\\
=&-\int_{\mathcal{M}^{\ast}_{i}\times\mathcal{N}^{\ast}}D_{j}A_{j}(\gamma_{i}A_{i}\beta_{11}^{ij})\, |D_{i}z|^{2}+\frac{h^{2}}{2}\int_{\overline{\mathcal{M}}_{ij}\times\mathcal{N}^{\ast}}D_{j}(\gamma_{i}A_{i}\beta_{11}^{ij})\,|D_{ij}^{2}z|^{2},
    \end{split}
\end{equation}
since $D_{i}u=0$ on $\partial_{j}\mathcal{M}^{\ast}_{i}$. Thus, combining \eqref{eq:non:diagonal:I_{11a}} and \eqref{eq:non:diagonal:I_{11b}} we obtain that $I_{11}^{ij}$ can be written as
\begin{equation}
    \begin{split}
       I_{11}^{ij}=&-2\int_{\mathcal{M}\times\mathcal{N}^{\ast}}A_{i}(\gamma_{i}D_{i} \beta_{11}^{ij})D_{i}A_{i}z\,D_{j}A_{j}z-\frac{h^{2}}{2}\int_{\mathcal{M}\times\mathcal{N}^{\ast}}D_{i}(\gamma_{i}D_{i}\beta_{11}^{ij})D_{i}^{2}zD_{j}A_{j}z\\
      &+\int_{\mathcal{M}^{\ast}_{i}\times\mathcal{N}^{\ast}}D_{j}A_{j}(\gamma_{i}A_{i}\beta_{11}^{ij})\, |D_{i}z|^{2}-\frac{h^{2}}{2}\int_{\overline{\mathcal{M}}_{ij}\times\mathcal{N}^{\ast}}D_{j}(\gamma_{i}A_{i}\beta_{11}^{ij})\,|D_{ij}^{2}z|^{2}.
    \end{split}
\end{equation}
Recalling that $\beta_{11}^{ij}:=r^{2}A_{i}^{2}\rho\,D_{j}A_{j}\rho$, Theorem \ref{theo:weight:estimates} and Corollary \ref{Cor:derivative:wrt:x:2r} yield the following estimates
\begin{equation}
    \begin{split}
A_{i}(\gamma_{i}D_{i}\beta_{11}^{ij})&=-s\lambda^{2}\gamma_{i}\gamma_{j}\phi\,\partial_{i}\psi\,\partial_{j}\psi+s\lambda\gamma_{j}\phi\,\mathcal{O}(1)+s\mathcal{O}_{\lambda}((sh)^{2}),\\
        D_{i}(\gamma_{i}D_{i}\beta_{11}^{ij})&=s\mathcal{O}_{\lambda}(1),\\
        D_{j}A_{j}(\gamma_{i}A_{i}\beta_{11}^{ij})&=-s\lambda^{2}\gamma_{i}\gamma_{j}\phi(\partial_{j}\psi)^{2}+s\lambda\gamma_{j}\phi\,\mathcal{O}(1)+s\mathcal{O}_{\lambda}((sh)^{2}),\\
        D_{j}(\gamma_{i}A_{i}\beta_{11}^{ij})&=-s\lambda^{2}\gamma_{i}\gamma_{j}\phi(\partial_{j}\psi)^{2}+s\lambda\gamma_{j}\phi\,\mathcal{O}(1)+s\mathcal{O}_{\lambda}((sh)^{2}).
    \end{split}
\end{equation}
Therefore, by using the estimates above we get
\begin{equation}
    \begin{split}
        I_{11}^{ij}=-\int_{\mathcal{M}_{i}^{\ast}\times\mathcal{N}^{\ast}}s\lambda^{2}\gamma_{i}\gamma_{j}\phi(\partial_{j}\psi)^{2}|D_{i}z|^{2}+V_{11}^{ij}+W_{11}^{ij},
    \end{split}
\end{equation}
where
\begin{equation}
\begin{split}
    W_{11}^{ij}:=&\int_{\mathcal{M}\times\mathcal{N}^{\ast}}\left(s\lambda\varphi\mathcal{O}(1)+s\mathcal{O}_{\lambda}((sh)^{2})\right)\,A_{i}D_{i}z\,D_{j}A_{j}z+\int_{\mathcal{M}\times\mathcal{N}^{\ast}}h\mathcal{O}_{\lambda}(sh)\,D_{i}^{2}z\,D_{j}A_{j}z\\
    &+\int_{\overline{\mathcal{M}}_{ij}\times\mathcal{N}^{\ast}}\left(sh^{2}\lambda\phi\mathcal{O}(1)+sh^{2}\mathcal{O}_{\lambda}((sh)^{2})\right)\,|D_{ij}^{2}z|^{2}\\
    &+\int_{\mathcal{M}_{i}^{\ast}\times\mathcal{N}^{\ast}}(s\lambda\phi\mathcal{O}(1)+s\mathcal{O}_{\lambda}((sh)^{2}))\, |D_{i}z|^{2},
    \end{split}
\end{equation}
\begin{equation}
    V_{11}^{ij}:=\int_{\mathcal{M}\times\mathcal{N}^{\ast}}2s\lambda^{2}\gamma_{i}\gamma_{j}\phi\partial_{j}\psi\partial_{i}\psi\,A_{i}D_{i}z\,D_{j}A_{j}z+\frac{1}{2}\int_{\overline{\mathcal{M}}_{ij}\times\mathcal{N}^{\ast}} sh^{2}\gamma_{i}\gamma_{j}\lambda^{2}\phi(\partial_{j}\psi)^{2}\,|D_{ij}^{2}z|^{2},
\end{equation}

\par On the other hand, we note that 
\begin{equation}
    \begin{split}
        \int_{\mathcal{M}\times\mathcal{N}^{\ast}}h\mathcal{O}_{\lambda}(sh)\,D_{i}^{2}z\,D_{j}A_{j}z\leq &\int_{\mathcal{M}\times\mathcal{N}^{\ast}}h^{2}\mathcal{O}_{\lambda}(sh)\,|D_{i}^{2}z|^{2}+\int_{\mathcal{M}\times\mathcal{N}^{\ast}}\mathcal{O}_{\lambda}(sh)\,|D_{j}A_{j}z|^{2}.
    \end{split}
\end{equation}
Thus, $W_{11}^{ij}$ can be bounded as follows
\begin{equation}
\begin{split}
    |W_{11}^{ij}|:=&\int_{\mathcal{M}\times\mathcal{N}^{\ast}}\left(s\lambda\varphi\mathcal{O}(1)+\mathcal{O}_{\lambda}(sh)+s\mathcal{O}_{\lambda}((sh)^{2})\right)\,|D_{i}A_{i}z|^{2}+\int_{\mathcal{M}\times\mathcal{N}^{\ast}}h^{2}\mathcal{O}_{\lambda}(sh)\,|D_{i}^{2}z|^{2}\\
    &+\int_{\overline{\mathcal{M}}_{ij}\times\mathcal{N}^{\ast}}\left(sh^{2}\lambda\phi\mathcal{O}(1)+sh^{2}\mathcal{O}_{\lambda}((sh)^{2})\right)\,|D_{ij}^{2}z|^{2}\\
    &+\int_{\mathcal{M}^{\ast}_{i}\times\mathcal{N}^{\ast}}(s\lambda\phi\mathcal{O}(1)+s\mathcal{O}_{\lambda}((sh)^{2}))\, |D_{i}z|^{2}\\
    \leq &\int_{\mathcal{M}_{i}^{\ast}\times\mathcal{N}^{\ast}}\left(s\lambda\phi\mathcal{O}(1)+\mathcal{O}_{\lambda}(sh)+s\mathcal{O}_{\lambda}((sh)^{2})\right)\,|D_{i}z|^{2}-\int_{\partial_{i}\mathcal{M}\times\mathcal{N}^{\ast}}\mathcal{O}_{\lambda}(sh)t_{r}^{i}(|D_{i}z|^{2})n_{i}\\
    &+\int_{\mathcal{M}\times\mathcal{N}^{\ast}}h^{2}\mathcal{O}_{\lambda}(sh)\,|D_{i}^{2}z|^{2}+\int_{\overline{\mathcal{M}}_{ij}\times\mathcal{N}^{\ast}}\left(sh^{2}\lambda\varphi\mathcal{O}(1)+sh^{2}\mathcal{O}_{\lambda}((sh)^{2})\right)\,|D_{ij}^{2}z|^{2}\\
    :=&X_{11}^{ij}.
    \end{split}
\end{equation}
From $I_{11}^{ii}$, the leading coefficient of $|D_{i}z|^{2}$ is $s\lambda^{2}\gamma_{i}^{2}\phi(\partial_{i}\psi)^{2}$. From $I_{11}^{ij}$ for each $j\ne i$, the leading coefficient of $|D_{i}z|^{2}$ is $
    -s\lambda^{2}\gamma_{i}\gamma_{j}\phi(\partial_{j}\psi)^{2}$.
Summing over $j\ne i$ we get 
$\displaystyle
    -s\lambda^{2}\gamma_{i}\phi\sum_{j\ne i}\gamma_{j}(\partial_{j}\psi)^{2}$. Then, $\displaystyle s\lambda^{2}\gamma_{i}^{2}\phi(\partial_{i}\psi)^{2}-s\lambda^{2}\gamma_{i}\phi\sum_{j\ne i}\gamma_{j}(\partial_{j}\psi)^{2}=s\lambda^{2}\gamma_{i}\phi\bigg[2\gamma_{i}(\partial_{i}\psi)^{2}-\sum_{j=1}^{d}\gamma_{j}(\partial_{j}\psi)^{2}\bigg].$
Thus, using the notation $|\nabla_{\Gamma}\psi|^{2}:=\sum_{j=1}^{d}\gamma_{j}(\partial_{j}\psi)^{2}$, the final estimate for $I_{11}$ is
\begin{equation}\label{eq:I11:final}
\begin{aligned}
I_{11}=&\sum_{i=1}^{d}\int_{\mathcal{M}_{i}^{\ast}\times\mathcal{N}^{\ast}}s\lambda^{2}\gamma_{i}\phi\Big[2\gamma_{i}(\partial_{i}\psi)^{2}-|\nabla_{\Gamma}\psi|^{2}\Big]|D_{i}z|^{2}\\
&-2s\lambda\sum_{i=1}^{d}\int_{\mathcal{M}_{i}^{\ast}\times\mathcal{N}^{\ast}}\gamma_{i}\phi\,\partial_{i}\psi\,\partial_{i}\gamma_{i}\,|D_{i}z|^{2}+\sum_{i\ne j}V_{11}^{ij}+W_{11}+Y_{11},
\end{aligned}
\end{equation}
where  $V_{11}^{ij}$ is given by
\begin{equation}\label{eq:V11}
V_{11}^{ij}:=2\int_{\mathcal{M}\times\mathcal{N}^{\ast}}s\lambda^{2}\gamma_{i}\gamma_{j}\phi\,\partial_{i}\psi\,\partial_{j}\psi\,D_{i}A_{i}z\,D_{j}A_{j}z+\frac{h^{2}}{2}\int_{\overline{\mathcal{M}}_{ij}\times\mathcal{N}^{\ast}}s\lambda^{2}\gamma_{i}\gamma_{j}\phi(\partial_{j}\psi)^{2}\,|D_{ij}^{2}z|^{2},
\end{equation}
and 
\begin{equation}\label{eq:W11}
\begin{aligned}
W_{11}:=&\sum_{i=1}^{d}\int_{\mathcal{M}_{i}^{\ast}\times\mathcal{N}^{\ast}}\Big(s\lambda\,\mathcal{O}_{\lambda}(1)+s\,\mathcal{O}_{\lambda}((sh)^{2})\Big)|D_{i}z|^{2}+\sum_{i=1}^{d}h\int_{\mathcal{M}\times\mathcal{N}^{\ast}}\mathcal{O}_{\lambda}(sh)\,|D_{i}^{2}z|^{2}\\
&+\sum_{i\ne j}\int_{\mathcal{M}\times\mathcal{N}^{\ast}}\Big(s\lambda\gamma_{j}\phi\,\mathcal{O}(1)+s\,\mathcal{O}_{\lambda}((sh)^{2})\Big)D_{i}A_{i}z\,D_{j}A_{j}z\\
&+\sum_{i\ne j}\int_{\mathcal{M}\times\mathcal{N}^{\ast}}h\,\mathcal{O}_{\lambda}(sh)\,D_{i}^{2}z\,D_{j}A_{j}z\\
&+\sum_{i\ne j}\int_{\overline{\mathcal{M}}_{ij}\times\mathcal{N}^{\ast}}\Big(sh^{2}\lambda\gamma_{j}\phi\,\mathcal{O}(1)+sh^{2}\mathcal{O}_{\lambda}((sh)^{2})\Big)|D_{ij}^{2}z|^{2},
\end{aligned}
\end{equation}
and the boundary terms are
\begin{equation}\label{eq:Y11}
Y_{11}:=\sum_{i=1}^{d}\int_{\partial_{i}\mathcal{M}\times\mathcal{N}^{\ast}}\Big(-s\lambda\gamma_{i}^{2}\phi\,\partial_{i}\psi+\mathcal{O}_{\lambda}(sh)\Big)\,\mathrm{tr}^{i}(|D_{i}z|^{2})\,n_{i}.
\end{equation}

\begin{remark}
\begin{itemize}
    \item
    The coefficient of $|D_{i}z|^{2}$ after summing over $j$ is
    \begin{equation*}
        s\lambda^{2}\gamma_{i}\phi\bigg[2\gamma_{i}(\partial_{i}\psi)^{2}
        -\sum_{j=1}^{d}\gamma_{j}(\partial_{j}\psi)^{2}\bigg].
    \end{equation*}
    When $\gamma_{i}=\gamma$ for all $i$, this equals
    $s\lambda^{2}\gamma^{2}\phi[2(\partial_{i}\psi)^{2}-|\nabla\psi|^{2}]$.
    In dimension $d=1$ this is positive ($+s\lambda^{2}\gamma^{2}\phi|\psi'|^{2}$),
    but for $d\geq 2$ it is not sign-definite. In the full Carleman estimate,
    this term is combined with $I_{21}+I_{22}$ (which provides
    the dominant $s^{3}\lambda^{4}|\nabla\psi|^{4}|z|^{2}$ term)
    and absorbed for $s$ large enough.
    \item The boundary term
    $Y_{11}$ has leading contribution
    $-s\lambda\gamma_{i}^{2}\phi(\partial_{i}\psi\cdot n_{i})\,
    \mathrm{tr}^{i}(|D_{i}z|^{2})$,
    which is positive under the geometric condition
    $\partial_{i}\psi\cdot n_{i}<0$ on $\partial_{i}\mathcal{M}$,
    for $\lambda$ large enough.
\end{itemize}
\end{remark}

\subsection{Estimate of $I_{12}$ }
We set $\beta_{12}^{i}:=-2s\,rA_{i}^{2}\rho\,\Delta_{\Gamma}\phi$. Recall that 
\begin{equation*}
    \begin{split}
        A_{1}z:=&\sum_{i=1}^{d}\trm(rA_{i}^{2}\rho\,D_{i}(\gamma_{i}D_{i}z))\,\text{ and } \,
        B_{2}z:=-2\trm(s\Delta_{\Gamma}\phi z).
    \end{split}
\end{equation*}
Thanks to \cite[Proposition 2, equation (2.4)]{LMPZ:2023}, we can shift the discrete 
time variable to obtain
\begin{equation*}
I_{12}=\sum_{i=1}^{d}\int_{\mathcal{M}\times\mathcal{N}^{\ast}}\beta_{12}^{i}\,z\,D_{i}(\gamma_{i}D_{i}z)=:\sum_{i=1}^{d}I^{i}_{12}.
\end{equation*}
An integration by parts with respect to the difference operator $D_{i}$ yields \break $\displaystyle 
    I_{12}^{i}=-\int_{\mathcal{M}^{\ast}_{i}\times\mathcal{N}^{\ast}}\gamma_{i}D_{i}( \beta_{12}^{i}\,z)\,D_{i}z$,
where we have used that $z=0$ on $\partial_{i}\mathcal{M}$. Now, by using  we write 
    \begin{equation*}
    \begin{split}
    I^{i}_{12}&=-\int_{\mathcal{M}^{\ast}_{i}\times\mathcal{N}^{\ast}}\gamma_{i}D_{i}\beta_{12}^{i}\,A_{i}z\,D_{i}z-\int_{\mathcal{M}^{\ast}_{i}\times\mathcal{N}^{\ast}}\gamma_{i}A_{i}\beta_{12}^{i}\,|D_{i}z|^{2}.
    \end{split}
\end{equation*}
Then, noting that $D_{i}(|z|^{2})=2A_{i}z\,D_{i}z$, $I_{12}^{i}$ can be written as follows
\begin{equation*}
    \begin{split}
    I_{12}^{i}&=-\frac{1}{2}\int_{\mathcal{M}^{\ast}_{i}\times\mathcal{N}^{\ast}}\gamma_{i}D_{i}\beta_{12}^{i}\,D_{i}(|z|^{2})-\int_{\mathcal{M}^{\ast}_{i}\times\mathcal{N}^{\ast}}\gamma_{i}A_{i}(\beta_{12}^{i})|D_{i}z|^{2}.
    \end{split}
\end{equation*}
An integration by parts with respect to the difference operator $D_{i}$(\cite[Lemma 2.2 equation (15)]{LDOP-2021}) on the first integral above gives
\begin{equation}
    \begin{split}
    I_{12}^{i}&=\frac{1}{2}\int_{\mathcal{M}\times\mathcal{N}^{\ast}}D_{i}(\gamma_{i}D_{i}\beta_{12}^{i})\,|z|^{2}-\int_{\mathcal{M}^{\ast}_{i}\times\mathcal{N}^{\ast}}\gamma_{i}A_{i}(\beta_{12}^{i})|D_{i}z|^{2}.
    \end{split}
\end{equation}
By Theorem \ref{theo:weight:estimates} we have $rA_{i}^{2}\rho=1+\mathcal{O}_{\lambda}((sh)^{2})$, then  $\beta_{12}^{ij}=-2s\lambda^{2}\phi|\nabla_{\Gamma}\psi|^{2}-2s\lambda\phi\Delta_{\Gamma} \psi+s\mathcal{O}_{\lambda}((sh)^{2})$ where we have used that $\Delta_{\Gamma}\phi=\lambda\phi(\lambda|\nabla_{\Gamma}\psi|^{2}+\Delta_{\Gamma}\psi)$. 
Moreover,
\begin{equation}\label{eq:beta12:derived}
    \begin{split}
        A_{i}(\beta_{12}^{i})&=-2s\lambda^{2}\phi\,|\nabla_{\Gamma}\psi|^{2}
        -2s\lambda\phi\,\Delta_{\Gamma}\psi+s\,\mathcal{O}_{\lambda}(h^{2}+(sh)^{2}),\\
        D_{i}(\beta_{12}^{i})&=s\,\mathcal{O}_{\lambda}(1),\\
        D_{i}(\gamma_{i}D_{i}\beta_{12}^{i})&=s\,\mathcal{O}_{\lambda}(1).
    \end{split}
\end{equation}
thanks to \cite[Lemma 2.1]{LDOP-2021} and Theorem \ref{theo:weight:estimates}. Thus, $I_{12}$ can be estimate as
\begin{equation}\label{eq:I12:final}
    I_{12}=2\sum_{i=1}^{d}\int_{\mathcal{M}_{i}^{\ast}\times\mathcal{N}^{\ast}}s\lambda^{2}
    \gamma_{i}\phi\,|\nabla_{\Gamma}\psi|^{2}\,|D_{i}z|^{2}+X_{12},
\end{equation}
where the remainder is given by
\begin{equation}\label{eq:X12}
    X_{12}:=\int_{\mathcal{M}\times\mathcal{N}^{\ast}}s\,\mathcal{O}_{\lambda}(1)\,|z|^{2}
    +\sum_{i=1}^{d}\int_{\mathcal{M}_{i}^{\ast}\times\mathcal{N}^{\ast}}
    \Big(2s\lambda\gamma_{i}\phi\,\Delta_{\Gamma}\psi
    +s\,\mathcal{O}_{\lambda}(h^{2}+(sh)^{2})\Big)|D_{i}z|^{2}.
\end{equation}

\begin{remark}
The leading term of $I_{12}$ is
$2s\lambda^{2}\sum_{i}\gamma_{i}\phi\,|\nabla_{\Gamma}\psi|^{2}|D_{i}z|^{2}$,
which is positive and of order $s\lambda^{2}$.
When combined with the leading term of $I_{11}$
(which contains
$s\lambda^{2}\gamma_{i}\phi[2\gamma_{i}(\partial_{i}\psi)^{2}
-|\nabla_{\Gamma}\psi|^{2}]|D_{i}z|^{2}$),
the leading coefficient of $|D_{i}z|^{2}$ becomes
\begin{equation*}
    s\lambda^{2}\gamma_{i}\phi
    \Big[2\gamma_{i}(\partial_{i}\psi)^{2}
    -|\nabla_{\Gamma}\psi|^{2}+2|\nabla_{\Gamma}\psi|^{2}\Big]
    =s\lambda^{2}\gamma_{i}\phi
    \Big[2\gamma_{i}(\partial_{i}\psi)^{2}
    +|\nabla_{\Gamma}\psi|^{2}\Big],
\end{equation*}
which is positive. The sub-leading term $2s\lambda\gamma_{i}\phi\,\Delta_{\Gamma}\psi$
is of order $s\lambda$ and is absorbed for $\lambda$ large.
\end{remark}
\subsection{Estimate of $I_{13}$}
For $I_{13}$, let us set $\beta^{i}_{13}:=rA_{i}^{2}\rho$. Then, we have 
\begin{equation*}
I_{13}=\sum_{i=1}^{d}\int_{\mathcal{M}\times\mathcal{N}}\trm(\beta_{13}^{i}\,D_{i}(\gamma_{i}D_{i}z))D_{t}z=:\sum_{i=1}^{d}I^{i}_{13}.
\end{equation*}
Followed by a discrete integration by parts with respect to the difference operator $D_{i}$, $I_{13}$ can be written as
\begin{equation*}
\begin{split}
I^{i}_{13}=&-\int_{\mathcal{M}_{i}^{\ast}\times\mathcal{N}}\trm(\gamma_{i}D_{i}z)\,D_{i}(\trm\beta_{13}^{i}\,D_{t}z),
\end{split}
\end{equation*}
since $D_{t}z=0$ on $\partial\mathcal{M}$. We note that the product rule, \cite[Lemma 2.1]{LDOP-2021}, for the difference operator $D_{i}$ yields
\begin{equation*}
\begin{aligned}
    I_{13}^{i}=&-\int_{\mathcal{M}_{i}^{\ast}\times\mathcal{N}}\trm(\gamma_{i}D_{i}z)D_{i}(\trm\beta_{13}^{i})\,A_{i}D_{t}z\,-\int_{\mathcal{M}_{i}^{\ast}\times\mathcal{N}}\trm(\gamma_{i}D_{i}z)A_{i}\trm(\beta^{i}_{13})\,D_{it}^{2}z\\
    =:&I_{13}^{i(a)}+I_{13}^{i(b)}.
    \end{aligned}
\end{equation*}
Now, by using the identity $\trm zD_{t}z=\frac{1}{2}D_{t}(|z|^{2})-\frac{1}{2}\Delta t|D_{t}z|^{2}$, for $I_{13}^{(b)}$ we get
\begin{equation*}
\begin{split}
    I_{13}^{i(b)}=&-\frac{1}{2}\int_{\mathcal{M}_{i}^{\ast}\times\mathcal{N}}\trm(\gamma_{i}A_{i}\beta_{13}^{i})\,D_{t}(|D_{i}z|^{2})+\frac{\Delta t}{2}\int_{\mathcal{M}_{i}^{\ast}\times\mathcal{N}}\trm(\gamma_{i}A_{i}\beta_{13}^{i})\,|D_{it}^{2}z|^{2}.
    \end{split}
\end{equation*}
Using the integration by parts with respect to the time difference operator $D_{t}$ given by \cite[Proposition 2, equation (2.5)]{LMPZ:2023}, on the first integral of $I_{13}^{i(b)}$, we obtain
\begin{equation*}
\begin{aligned}
    I_{13}^{i(b)}=&\frac{1}{2}\int_{\mathcal{M}_{i}^{\ast}\times\mathcal{N}}\gamma_{i}D_{t}(A_{i}\beta_{13}^{i})\,\trp(|D_{i}z|^{2})-\frac{1}{2}\int_{\mathcal{M}_{i}^{\ast}\times\partial\mathcal{N}}\trp(\gamma_{i}A_{i}\beta_{13}^{i}|D_{i}z|^{2})n\\
    &+\frac{\Delta t}{2}\int_{\mathcal{M}_{i}^{\ast}\times\mathcal{N}}\trm(\gamma_{i}A_{i}\beta_{13}^{i})\,|D_{it}^{2}z|^{2}.
    \end{aligned}
\end{equation*}
By virtue of Theorem \ref{theo:fully:weight:estimates} we note that $A_{i}\taut^{\pm}\alpha_{13}= C_{\lambda}>0$ provided $\tau h(\delta T^{2})^{-1}\leq \varepsilon_{1}(\lambda)$ small enough, we thus have the following lower bound for $I_{13}^{i(b)}$
\begin{equation*}
\begin{split}
    I_{13}^{i(b)}\geq&\frac{1}{2}\int_{\mathcal{M}^{\ast}_{i}\times\mathcal{N}}\gamma_{i}D_{t}(A_{i}\beta_{13}^{i})\,\trp(|D_{i}z|^{2})-C_{\lambda}\int_{\mathcal{M}^{\ast}_{i}\times\partial\mathcal{N}^{+}}\gamma_{i}|D_{i}\trp z|^{2}+C_{\lambda}\Delta t \int_{\mathcal{M}_{i}^{\ast}\times\mathcal{N}}|D_{it}^{2}z|^{2},
    \end{split}
\end{equation*}
where we have dropped the left time boundary term since it is positive 
We note that provided $\frac{\Delta t \tau}{T^{3}\delta^{2}}\leq \frac{1}{2}$, from Theorem \eqref{theo:fully:weight:estimates}, we have 
$\gamma_{i}D_{t}(A_{i}\beta^{i}_{13})=T\trm(\theta(sh)^{2})\mathcal{O}_{\lambda}(1)+\left( \frac{\tau\Delta t}{\delta^{3}T^{4}}\right)\left( \frac{\tau h}{\delta T^{2}}\right)\mathcal{O}_{\lambda}(1)$. Then, we deduce
\begin{equation*}
\begin{split}
    I_{13}^{i(b)}\geq&-\int_{\mathcal{M}^{\ast}_{i}\times\mathcal{N}}\left( T\theta(sh)^{2}\mathcal{O}_{\lambda}(1)+\left( \frac{\tau\Delta t}{\delta^{3}T^{4}}\right)\left( \frac{\tau h}{\delta T^{2}}\right)\mathcal{O}_{\lambda}(1)\right)\,|D_{i}z|^{2}-C_{\lambda}\int_{\mathcal{M}_{i}^{\ast}\times\partial\mathcal{N}^{+}}|D_{i}\trp z|^{2}\\
    &-C_{\lambda}\Delta t\int_{\mathcal{M}_{i}^{\ast}\times\mathcal{N}}\left( \frac{\tau\Delta t}{\delta^{3}T^{4}}\right)\left( \frac{\tau h}{\delta T^{2}}\right)\,|D_{it}^{2}z|^{2}+C_{\lambda}\Delta t \int_{\mathcal{M}_{i}^{\ast}\times\mathcal{N}}|D_{it}^{2}z|^{2},
    \end{split}
\end{equation*}
 where we used $\trp(|D_{i}z|^{2})\leq 2\trm(|D_{i}z|^{2})+2(\Delta t)^{2}|D_{it}z|^{2}$. Hence, taking $\epsilon_{1}(\lambda)$ small enough, for $I_{13}^{i(b)}$ we have
\begin{equation}\label{eq:I13:a2}
\begin{split}
    I_{13}^{i(b)}\geq&-\int_{\mathcal{M}_{i}^{\ast}\times\mathcal{N}}\left( T\theta(sh)^{2}\mathcal{O}_{\lambda}(1)+\left( \frac{\tau\Delta t}{\delta^{3}T^{4}}\right)\left( \frac{\tau h}{\delta T^{2}}\right)\mathcal{O}_{\lambda}(1)\right)\,|D_{i}z|^{2}\\
    &-C_{\lambda}\int_{\mathcal{M}_{i}^{\ast}\times\partial\mathcal{N}^{+}}|D_{i}\trp z|^{2}+C_{\lambda}\Delta t \int_{\mathcal{M}^{\ast}\times\mathcal{N}}|D_{it}^{2}z|^{2}.
    \end{split}
\end{equation}
\par On the other hand, by virtue of Theorem \eqref{theo:weight:estimates}  we note that $D_{i}\beta_{13}^{i}=s\mathcal{O}_{\lambda}((sh)^{2})$. Then, this estimate and the Young's inequality for $I_{13}^{i(a)}$ yield 
\begin{equation*}
    \begin{split}
      |I_{13}^{i(a)}|\leq&C_{\lambda} \int_{\mathcal{M}^{\ast}_{i}\times\mathcal{N}}\trm(s^{-1}(sh)^{2}|D_{t}A_{i}z|^{2}+C_{\lambda}\int_{\mathcal{M}^{\ast}_{i}\times\mathcal{N}^{\ast}}s(sh)^{2}|D_{i}z|^{2},
    \end{split}
\end{equation*}
where in the last integral above we have used \cite[Proposition 2, equation (2.4)]{LMPZ:2023} to shift the discrete time variable. Now, using that $|D_{t}A_{i}z|^{2}\leq A_{i}(|D_{t}z|^{2})$ and an integration by parts with respect to the average operator $A_{i}$ (\cite[Lemma 2.2, equation (16)]{LDOP-2021}) on the first integral of the above expression, we obtain
\begin{equation}\label{eq:I13:a1}
    \begin{split}
      |I_{13}^{i(a)}|\leq&C_{\lambda} \left(\int_{\mathcal{M}\times\mathcal{N}}\trm(s^{-1}(sh)^{2})|D_{t}z|^{2}+\int_{\mathcal{M}_{i}^{\ast}\times\mathcal{N}^{\ast}}s(sh)^{2}|D_{i}z|^{2}\right),
    \end{split}
\end{equation}
since $D_{t}z=0$ on $\partial\mathcal{M}$. Thus, collecting \eqref{eq:I13:a2} and \eqref{eq:I13:a1} we obtain that $I_{13}^{i(a)}$ can be bounded as $\displaystyle I_{13}^{i(a)}\geq -W_{13}$ where
\begin{equation}
    \begin{split}
          W_{13}=&\int_{\mathcal{M}^{\ast}_{i}\times\mathcal{N}^{\ast}}\left( T\theta(sh)^{2}\mathcal{O}_{\lambda}(1)+\left( \frac{\tau\Delta t}{\delta^{3}T^{4}}\right)\left( \frac{\tau h}{\delta T^{2}}\right)\mathcal{O}_{\lambda}(1)\right)\,|D_{i}z|^{2}\\
        &-C_{\lambda}\Delta t \int_{\mathcal{M}_{i}^{\ast}\times\mathcal{N}}|D_{it}^{2}z|^{2}+C_{\lambda}\int_{\mathcal{M}_{i}^{\ast}\times\mathcal{N}^{\ast}}s(sh)^{2}|D_{i}z|^{2}. 
    \end{split}
\end{equation}
Combining the estimates for $I_{13}^{i(a)}$ and $I_{13}^{i(b)}$,
and summing over $i=1,\ldots,d$, we obtain
\begin{equation}\label{eq:I13:final}
\begin{aligned}
I_{13}\geq&-\sum_{i=1}^{d}\int_{\mathcal{M}_{i}^{\ast}\times\mathcal{N}}
\bigg(T\theta(sh)^{2}\,\mathcal{O}_{\lambda}(1)
+\frac{\tau\Delta t}{\delta^{3}T^{4}}\cdot\frac{\tau h}{\delta T^{2}}\,
\mathcal{O}_{\lambda}(1)
+s(sh)^{2}\,\mathcal{O}_{\lambda}(1)\bigg)|D_{i}z|^{2}\\
&-\mathcal{O}_{\lambda}((sh)^{2})
\int_{\mathcal{M}\times\mathcal{N}}|D_{t}z|^{2}
+C_{\lambda}\Delta t\sum_{i=1}^{d}
\int_{\mathcal{M}_{i}^{\ast}\times\mathcal{N}}|D_{it}^{2}z|^{2}\\
&-C_{\lambda}\sum_{i=1}^{d}
\int_{\mathcal{M}_{i}^{\ast}\times\partial\mathcal{N}^{+}}
|D_{i}\tau^{+}z|^{2}.
\end{aligned}
\end{equation}
The positive $\Delta t\int|D_{it}^{2}z|^{2}$ term from $I_{13}^{i(b)}$
is kept on the right-hand side as a good term that can be
moved to the left-hand side of the Carleman estimate.

\subsection{Estimate of $I_{21}$}
Let us recall that  
\begin{equation*}
\begin{split}
    A_{2}z:=\sum_{i=1}^{d}\gamma_{i}\trm(rD_{i}^{2}\rho A_{i}^{2}z),\, \text{ and }\,
    B_{1}z:=2\sum_{i=1}^{d}\gamma_{i}\trm(rD_{i}A_{i}\rho D_{i}A_{i}z).
    \end{split}
\end{equation*}
Then, setting $\beta_{21}^{ij}:=2\gamma_{i}\gamma_{j}r^{2}D_{i}^{2}\rho D_{j}A_{j}\rho$  we have after shiftting the time varible
\begin{equation*}
    I_{21}=\sum_{i,j=1}^ {d}\int_{\mathcal{M}\times\mathcal{N}^{\ast}}\beta_{21}^{ij}\,A_{i}^{2}z\,D_{j}A_{j}z=:\sum_{i,j=1}^{d}I_{21}^{ij}.
\end{equation*}
\par Let us estimate $I_{21}^{ij}$ for $i\ne j$. Integrating by parts with respect to the average operator $A_{i}$, and using that $D_{j}A_{j}z=0$ on $\partial_{i}\mathcal{M}$, we have 
\begin{equation*}
    \begin{split}
I_{21}^{ij}=\int_{\mathcal{M}_{i}^{\ast}\times\mathcal{N}^{\ast}}A_{i}(\beta_{21}^{ij}\,D_{j}A_{j}z)\,A_{i}z.
    \end{split}
\end{equation*}
Notice that $A_{i}(\beta_{21}^{ij}\,D_{j}A_{j}z)=A_{i}\beta_{21}^{ij}\,A_{ij}^{2}D_{j}z+\frac{h^{2}}{4}D_{i}\beta_{21}^{ij}\,D_{ij}^{2}A_{j}z$, thanks to \cite[Lemma 2.1, equation (14)]{LDOP-2021}. Then, $I_{21}^{ij}$ can be written as
\begin{equation}
    \begin{split}
        I_{21}^{ij}=&\int_{\mathcal{M}_{i}^{\ast}\times\mathcal{N}^{\ast}}A_{i}\beta_{21}^{ij}\,A_{ij}^{2}D_{j}z\,A_{i}z+\frac{h^{2}}{4}\int_{\mathcal{M}_{i}^{\ast}\times\mathcal{N}^{\ast}}D_{i}\beta_{21}^{ij}\,D_{ij}^{2}A_{j}z\,A_{i}z=:I_{21}^{(a)}+I_{21}^{(b)}.
    \end{split}
\end{equation}
\par The next step is to estimate $I_{21}^{(a)}$ and $I_{21}^{(b)}$ as follows. We apply an integration by parts on $I_{21}^{(a)}$ with respect to the average operator $A_{j}$, and use that $A_{i}z=0$ on $\partial_{j}\mathcal{M}$, to obtain $\displaystyle
    I_{21}^{(a)}=\int_{\overline{\mathcal{M}}_{ij}\times\mathcal{N}^{\ast}}A_{j}(A_{i}\beta_{21}^{ij}\,A_{i}z)\,A_{i}D_{j}z$. Then, by using \cite[Lemma 2.1, equation (13)]{LDOP-2021} we have
\begin{equation}
    I_{21}^{(a)}=\frac{1}{2}\int_{\overline{\mathcal{M}}_{ij}\times\mathcal{N}^{\ast}}A^{2}_{ij}\beta_{21}^{ij}\,D_{j}(|A_{i}z|^{2})+\frac{h^{2}}{4}\int_{\overline{\mathcal{M}}_{ij}\times\mathcal{N}^{\ast}}D_{j}A_{i}\beta_{21}^{ij}\,|A_{i}D_{j}z|^{2},
\end{equation}
Thus, applying an integration by parts with respect to the difference operator $D_{j}$ on the first integral in the above expression, we obtain
\begin{equation*}
\begin{split}
    I_{21}^{(a)}=&-\frac{1}{2}\int_{\mathcal{M}^{\ast}_{i}\times\mathcal{N}^{\ast}}D_{j}A^{2}_{ij}\beta_{21}^{ij}\,|A_{i}z|^{2}
    +\frac{h^{2}}{4}\int_{\overline{\mathcal{M}}_{ij}\times\mathcal{N}^{\ast}}D_{j}A_{i}\beta_{21}^{ij}\,|A_{i}D_{j}z|^{2},
    \end{split}
\end{equation*}
where we have used that $A_{i}z=0$ on $\partial_{j}\mathcal{M}$. Moreover, by using $|A_{i}z|^{2}=A_{i}(|z|^{2})-\frac{h^{2}}{4}|D_{i}z|^{2}$ we have
\begin{equation}
\begin{split}
    I_{21}^{(a)}=&-\frac{1}{2}\int_{\mathcal{M}^{\ast}_{i}\times\mathcal{N}^{\ast}}D_{j}A^{2}_{ij}\beta_{21}^{ij}\,A_{i}(|z|^{2})+\frac{h^{2}}{8}\int_{\mathcal{M}^{\ast}_{i}\times\mathcal{N}^{\ast}}D_{j}A^{2}_{ij}\beta_{21}^{ij}\,|D_{i}z|^{2}+\frac{h^{2}}{4}\int_{\overline{\mathcal{M}}_{ij}\times\mathcal{N}^{\ast}}D_{j}A_{i}\beta_{21}^{ij}\,|A_{i}D_{j}z|^{2}.
    \end{split}
\end{equation}
Applying an integration by parts, with respect to the average operator $A_{i}$ on the first term in the above expression, gives
\begin{equation}
\begin{split}
    I_{21}^{(a)}=&-\frac{1}{2}\int_{\mathcal{M}\times\mathcal{N}^{\ast}}A_{i}D_{j}A^{2}_{ij}\beta_{21}^{ij}\,|z|^{2}+\frac{h^{2}}{8}\int_{\mathcal{M}^{\ast}_{i}\times\mathcal{N}^{\ast}}D_{j}A^{2}_{ij}\beta_{21}^{ij}\,|D_{i}z|^{2}+\frac{h^{2}}{4}\int_{\overline{\mathcal{M}}_{ij}\times\mathcal{N}^{\ast}}D_{j}\beta_{21}^{ij}\,|A_{i}D_{j}z|^{2}.
    \end{split}
\end{equation}
Recalling that $\beta_{21}^{ij}:=2\gamma_{i}\gamma_{j}r^{2}D_{i}^{2}\rho D_{j}A_{j}\rho$, from Theorem \ref{theo:weight:estimates} we have
\begin{equation}\label{estimation:beta:21}
    \begin{split}
    \beta_{21}^{ij}=&-2s^{3}\lambda^{3}\phi^{3}\gamma_{i}\gamma_{j}(\partial_{i}\psi)^{2}\partial_{j}\psi+s^{2}\mathcal{O}_{\lambda}(1)+s^{3}\mathcal{O}_{\lambda}((sh)^{2}),\\
        D_{j}\beta_{21}^{ij}=&-6s^{3}\lambda^{4}\phi^{3}\gamma_{i}\gamma_{j}(\partial_{i}\psi)^{2}(\partial_{j}\psi)^{2}+6s^{2}\lambda^{3}\phi^{2}\mathcal{O}_{\lambda}(1)+s^{2}\mathcal{O}_{\lambda}(1)+s^{3}\mathcal{O}_{\lambda}((sh)^{2})=s^{3}\mathcal{O}_{\lambda}(1),\\
        A_{i}D_{j}A_{ij}^{2}\beta_{21}^{ij}=&-6s^{3}\lambda^{4}\phi^{3}(\partial_{i}\psi)^{2}\gamma_{i}\gamma_{j}(\partial_{j}\psi)^{2}+6s^{2}\lambda^{3}\phi^{2}\mathcal{O}_{\lambda}(1)+s^{2}\mathcal{O}_{\lambda}(1)+s^{3}\mathcal{O}_{\lambda}((sh)^{2}).
    \end{split}
\end{equation}
Thus, by using the estimates \eqref{estimation:beta:21}, we get
\begin{equation}\label{eq:J_{21a}}
        I_{21}^{(a)}=\int_{\mathcal{M}\times\mathcal{N}^{\ast}}3s^{3}\lambda^{4}\phi^{3}\gamma_{i}\gamma_{j}(\partial_{i}\psi)^{2}(\partial_{j}\psi)^{2}|z|^{2}+W_{21}^{ij},
\end{equation}
where
\begin{equation}
    \begin{split}
        W_{21}^{ij(a)}:=&\int_{\mathcal{M}\times\mathcal{N}^{\ast}}\left( s^{2}\lambda^{3}\phi^{2}\mathcal{O}_{\lambda}(1)+s^{2}\mathcal{O}_{\lambda}(1)+s^{3}\mathcal{O}_{\lambda}((sh)^{2})\right)|z|^{2}\\
        &+\int_{\mathcal{M}_{i}^{\ast}\times\mathcal{N}^{\ast}}s\mathcal{O}_{\lambda}((sh)^{2})|D_{i}z|^{2}+\int_{\overline{\mathcal{M}}_{ij}\times\mathcal{N}^{\ast}}s\mathcal{O}_{\lambda}((sh)^{2})\,|A_{i}D_{j}z|^{2}.
        \end{split}
\end{equation}
\par Now, we estimate $I_{21}^{(b)}$. By using the estimates \eqref{estimation:beta:21} and applying  Young's inequality we note that
\begin{equation}
\begin{split}
    |I_{21}^{(b)}|\leq&\int_{\mathcal{M}_{i}^{\ast}\times\mathcal{N}^{\ast}}s\mathcal{O}_{\lambda}((sh)^{2})|D_{ij}^{2}A_{j}z|^{2}+\int_{\mathcal{M}_{i}^{\ast}\times\mathcal{N}^{\ast}}s\mathcal{O}_{\lambda}((sh)^{2})|A_{i}z|^{2}.
    \end{split}
\end{equation}
Using inequality $|D_{ij}^{2}A_{j}z|^{2}\leq A_{j}(|D_{ij}^{2}z|^{2})$ it follows that
\begin{equation}
|I_{21}^{(b)}|\leq\int_{\mathcal{M}_{i}^{\ast}\times\mathcal{N}^{\ast}}s\mathcal{O}_{\lambda}((sh)^{2})A_{j}(|D_{ij}^{2}z|^{2})+\int_{\mathcal{M}_{i}^{\ast}\times\mathcal{N}^{\ast}}s\mathcal{O}_{\lambda}((sh)^{2})A_{i}(|z|^{2}).
\end{equation}
Integrating by parts with respect to the average operators $A_{j}$ and $A_{i}$ in the above expression yield
\begin{equation}
|I_{21}^{(b)}|\leq\int_{\overline{\mathcal{M}}_{ij}\times\mathcal{N}^{\ast}}s\mathcal{O}_{\lambda}((sh)^{2})\,|D_{ij}^{2}z|^{2}+\int_{\partial_{j}\mathcal{M}_{i}^{\ast}\times\mathcal{N}^{\ast}}s\mathcal{O}_{\lambda}((sh)^{2})t_{r}^{j}(|D_{ij}^{2}z|^{2})+\int_{\mathcal{M}\times\mathcal{N}^{\ast}}s\mathcal{O}_{\lambda}((sh)^{2})|z|^{2}.
\end{equation}
Hence, when $i\ne j$ we have
\begin{equation}\label{eq:I21:ij:final}
I_{21}^{ij}=3s^{3}\lambda^{4}\int_{\mathcal{M}\times\mathcal{N}^{\ast}}\gamma_{i}\gamma_{j}\phi^{3}(\partial_{i}\psi)^{2}(\partial_{j}\psi)^{2}\,|z|^{2}+W_{21}^{ij},
\end{equation}
where 
\begin{equation}\label{eq:W21:ij:full}
\begin{aligned}
W_{21}^{ij}:=&\int_{\mathcal{M}\times\mathcal{N}^{\ast}}\Big(s^{3}\mathcal{O}_{\lambda}(1)+s^{2}\mathcal{O}_{\lambda}(1)\Big)|z|^{2}+\int_{\mathcal{M}_{i}^{\ast}\times\mathcal{N}^{\ast}}s\,\mathcal{O}_{\lambda}((sh)^{2})|D_{i}z|^{2}\\
&+\int_{\overline{\mathcal{M}}_{ij}\times\mathcal{N}^{\ast}}s\,\mathcal{O}_{\lambda}((sh)^{2})|A_{i}D_{j}z|^{2}+\int_{\overline{\mathcal{M}}_{ij}\times\mathcal{N}^{\ast}}s\,\mathcal{O}_{\lambda}((sh)^{2})|D_{ij}^{2}z|^{2}\\
&+\frac{h}{2}\int_{\partial_{j}\mathcal{M}_{i}^{\ast}\times\mathcal{N}^{\ast}}s\,\mathcal{O}_{\lambda}((sh)^{2})t_{r}^{j}(|D_{ij}^{2}z|^{2})+\frac{h}{2}\int_{\partial_{i}\mathcal{M}\times\mathcal{N}^{\ast}}s\,\mathcal{O}_{\lambda}((sh)^{2})t_{r}^{i}(|z|^{2}).
\end{aligned}
\end{equation}
\par On the other hand, for $j=i$ we have that
\begin{equation*}
I_{21}^{ii}:=\int_{\mathcal{M}\times\mathcal{N}^{\ast}}\beta_{21}^{ii}A_{i}^{2}zD_{i}A_{i}z.
\end{equation*}
Noting that $D_{i}(|A_{i}z|^{2})=2A_{i}^{2}zD_{i}A_{i}z$, a discrete integration by parts gives
\begin{equation*}
I_{21}^{ii}=-\frac{1}{2}\int_{\mathcal{M}^{\ast}_{i}\times\mathcal{N}^{\ast}}D_{i}\beta_{21}^{ii}|A_{i}z|^{2}+\frac{1}{2}\int_{\partial_{i}\mathcal{M}\times\mathcal{N}^{\ast}}\beta_{21}^{ii}t_{r}^{i}(|A_{i}z|^{2})n_{i}.
\end{equation*}
Substituting the identity $|A_{i}z|^{2}=A_{i}(|z|^{2})-\frac{h_{i}^{2}}{4}|D_{i}z|^{2}$ and applying an integration by parts with respect to the average operator $A_{i}$ we obtain
\begin{equation}\label{eq:I21:ii:expanded}
\begin{aligned}
I_{21}^{ii}=&-\frac{1}{2}\int_{\mathcal{M}\times\mathcal{N}^{\ast}}A_{i}(D_{i}\beta_{21}^{ii})\,|z|^{2}+\frac{h^{2}}{8}\int_{\mathcal{M}_{i}^{\ast}\times\mathcal{N}^{\ast}}D_{i}\beta_{21}^{ii}\,|D_{i}z|^{2}\\
&-\frac{h}{4}\int_{\partial_{i}\mathcal{M}\times\mathcal{N}^{\ast}}D_{i}\beta_{21}^{ii}\,t_{r}^{i}(|z|^{2})\,n_{i}+\frac{1}{2}\int_{\partial_{i}\mathcal{M}\times\mathcal{N}^{\ast}}\beta_{21}^{ii}\,t_{r}^{i}(|A_{i}z|^{2})\,n_{i}.
\end{aligned}
\end{equation}
Thanks to Theorem \ref{theo:weight:estimates}
\begin{equation}\label{eq:Di:beta21:ii:estimate}
D_{i}\beta_{21}^{ii}=-6s^{3}\lambda^{4}\gamma_{i}^{2}\phi^{3}(\partial_{i}\psi)^{4}+s^{3}\mathcal{O}_{\lambda}(1)+s^{2}\mathcal{O}_{\lambda}(1)+s^{3}\mathcal{O}_{\lambda}((sh)^{2}).
\end{equation}
and 
\begin{equation}\label{eq:Ai:Di:beta21:ii:estimate}
A_{i}(D_{i}\beta_{21}^{ii})=-6s^{3}\lambda^{4}\gamma_{i}^{2}\phi^{3}(\partial_{i}\psi)^{4}+s^{3}\mathcal{O}_{\lambda}(1)+s^{2}\mathcal{O}_{\lambda}(1)+s^{3}\mathcal{O}_{\lambda}((sh)^{2}).
\end{equation}
Thus, $I_{21}^{ii}$ is estimated as follows
\begin{equation}\label{eq:I21:ii:final}
\begin{aligned}
I_{21}^{ii}=&\;3s^{3}\lambda^{4}\int_{\mathcal{M}\times\mathcal{N}^{\ast}}\gamma_{i}^{2}\phi^{3}(\partial_{i}\psi)^{4}\,|z|^{2}\\
&+\int_{\mathcal{M}\times\mathcal{N}^{\ast}}\Big(s^{3}\mathcal{O}_{\lambda}(1)+s^{2}\mathcal{O}_{\lambda}(1)+s^{3}\mathcal{O}_{\lambda}((sh)^{2})\Big)|z|^{2}\\
&+s\,\mathcal{O}_{\lambda}((sh)^{2})\int_{\mathcal{M}_{i}^{\ast}\times\mathcal{N}^{\ast}}|D_{i}z|^{2}\\
&-\frac{h}{4}\int_{\partial_{i}\mathcal{M}\times\mathcal{N}^{\ast}}\mathcal{O}_{\lambda}(s^{3})\,t_{r}^{i}(|z|^{2})\,n_{i}+\int_{\partial_{i}\mathcal{M}\times\mathcal{N}^{\ast}}s^{3}\mathcal{O}_{\lambda}(1)\,t_{r}^{i}(|A_{i}z|^{2})\,n_{i}.
\end{aligned}
\end{equation}
Therefore, combining \eqref{eq:I21:ij:final} and \eqref{eq:I21:ii:final},
and using the algebraic identity
\begin{equation*}
\sum_{i\ne j}\gamma_{i}\gamma_{j}(\partial_{i}\psi)^{2}(\partial_{j}\psi)^{2}
+\sum_{i=1}^{d}\gamma_{i}^{2}(\partial_{i}\psi)^{4}
=\bigg(\sum_{i=1}^{d}\gamma_{i}(\partial_{i}\psi)^{2}\bigg)^{2},
\end{equation*}
we get
\begin{equation}
I_{21}\geq 3\lambda^{4}\int_{\mathcal{M}\times\mathcal{N}^{\ast}}s^{3}\phi^{3}\left(\sum_{i=1}^{d}\gamma_{i}(\partial_{i}\psi)^{2}\right)^{2}|z|^{2}-X_{21}-Y_{21},
\end{equation}
where
\begin{equation}\label{eq:X21:final}
\begin{aligned}
X_{21}:=&\int_{\mathcal{M}\times\mathcal{N}^{\ast}}
\Big(s^{3}\mathcal{O}_{\lambda}(1)+s^{2}\mathcal{O}_{\lambda}(1)\Big)|z|^{2}
+\sum_{i=1}^{d}
\int_{\mathcal{M}_{i}^{\ast}\times\mathcal{N}^{\ast}}s\,\mathcal{O}_{\lambda}((sh)^{2})|D_{i}z|^{2}\\
&+\sum_{i\ne j}
\int_{\overline{\mathcal{M}}_{ij}\times\mathcal{N}^{\ast}}s\,\mathcal{O}_{\lambda}((sh)^{2})|A_{i}D_{j}z|^{2}
+\sum_{i\ne j})
\int_{\overline{\mathcal{M}}_{ij}\times\mathcal{N}^{\ast}}s\,\mathcal{O}_{\lambda}((sh)^{2}|D_{ij}^{2}z|^{2},
\end{aligned}
\end{equation}
and the boundary remainder is
\begin{equation}\label{eq:Y21:final}
\begin{aligned}
Y_{21}:=&\sum_{i=1}^{d}\bigg[\frac{h}{4}
\int_{\partial_{i}\mathcal{M}\times\mathcal{N}^{\ast}}
s^{3}\mathcal{O}_{\lambda}(1)\,t_{r}^{i}(|z|^{2})\,n_{i}
+\int_{\partial_{i}\mathcal{M}\times\mathcal{N}^{\ast}}
s^{3}\mathcal{O}_{\lambda}(1)\,t_{r}^{i}(|A_{i}z|^{2})\,n_{i}\bigg]\\
&+\sum_{i\ne j}\bigg[\frac{h}{2}
\int_{\partial_{j}\mathcal{M}_{i}^{\ast}\times\mathcal{N}^{\ast}}
s\,\mathcal{O}_{\lambda}((sh)^{2})\,t_{r}^{j}(|D_{ij}^{2}z|^{2})
+\frac{h}{2}\int_{\partial_{i}\mathcal{M}\times\mathcal{N}^{\ast}}
s\,\mathcal{O}_{\lambda}((sh)^{2})\,t_{r}^{i}(|z|^{2})\bigg].
\end{aligned}
\end{equation}

\subsection{Estimate of $I_{22}$ \label{lem:I22}}
 We recall that 
\begin{equation}
    A_{2}z:=\sum_{i=1}^{d}\gamma_{i}\trm(rD_{i}^{2}\rho\,A_{i}^{2}z)
\,\text{  and  }\,
    B_{2}z:=-2\trm(s\Delta\phi\,z).
\end{equation}
Let us set $\beta_{22}^{i}:=-2s\gamma_{i}\Delta_{\Gamma} \phi\,rD_{i}^{2}\rho$. Then, shifting the time variable we have
\begin{equation}
I_{22}=\sum_{i=1}^{d}\int_{\mathcal{M}\times\mathcal{N}^{\ast}} \beta_{22}^{i}\,A_{i}^{2}z\,z=:\sum_{i=1}^{d}I_{22}^{i}.
\end{equation}
By using the identity $A_{i}^{2}z=z+\frac{h^{2}}{4}D_{i}^{2}z$ we rewrite $I_{22}^{ij}$ as
\begin{equation}
   I_{22}^{i}=\int_{\mathcal{M}\times\mathcal{N}^{\ast}} \beta_{22}^{i}\,z\,\left(z+\frac{h^{2}}{4}D_{i}^{2}z\right)
    =\int_{\mathcal{M}\times\mathcal{N}^{\ast}} \beta_{22}^{i}|z|^{2}+\frac{h^{2}}{4}\int_{\mathcal{M}\times\mathcal{N}^{\ast}} \beta_{22}^{i}\,z\,D_{i}^{2}z.
\end{equation}
Integration by parts with respect to the difference operator $D_{i}$ yields 
\begin{equation}
    \int_{\mathcal{M}\times\mathcal{N}^{\ast}}\beta_{22}^{i}zD_{i}^{2}z=-\int_{\mathcal{M}_{i}^{\ast}\times\mathcal{N}^{\ast}}D_{i}(\beta_{22}^{i}\,z)\,D_{i}z,
\end{equation}
where we have used that $z=0$ on $\partial_{i}\mathcal{M}$ for $i=1,\ldots,d$. By using product rule in the operator $D_{i}$ \cite[Lemma 2.1, equation (13)]{LDOP-2021} it follows that
\begin{equation}
    I_{22}^{i}=\int_{\mathcal{M}\times\mathcal{N}^{\ast}} \beta_{22}^{i}|z|^{2}-\frac{h^{2}}{4}\int_{\mathcal{M}_{i}^{\ast}\times\mathcal{N}^{\ast}}D_{i}\beta_{22}^{i}\,A_{i}z\,D_{i}z-\frac{h^{2}}{4}\int_{\mathcal{M}_{i}^{\ast}\times\mathcal{N}^{\ast}}A_{i}\beta_{22}^{i}\,|D_{i}z|^{2}.
\end{equation}
Then, from \cite[Lemma 2.1, equation (13)]{LDOP-2021} we rewrite the above expression as
\begin{equation}
I_{22}^{i}=\int_{\mathcal{M}\times\mathcal{N}^{\ast}} \beta_{22}^{i}|z|^{2}-\frac{h^{2}}{8}\int_{\mathcal{M}_{i}^{\ast}\times\mathcal{N}^{\ast}}D_{i}\beta_{22}^{i}\,D_{i}(|z|^{2})-\frac{h^{2}}{4}\int_{\mathcal{M}_{i}^{\ast}\times\mathcal{N}^{\ast}}A_{i}\beta_{22}^{i}\,|D_{i}z|^{2}.
\end{equation}
After integration by parts with respect to the difference operator $D_{i}$, we obtain
\begin{equation*}
    I_{22}^{i}=\int_{\mathcal{M}\times\mathcal{N}^{\ast}} \beta_{22}^{i}|z|^{2}+\frac{h^{2}}{8}\int_{\mathcal{M}\times\mathcal{N}^{\ast}}D_{i}^{2}\beta_{22}^{i}\,|z|^{2}-\frac{h^{2}}{4}\int_{\mathcal{M}_{i}^{\ast}\times\mathcal{N}^{\ast}}A_{i}\beta_{22}^{i}\,|D_{i}z|^{2}.
\end{equation*}
On the other hand, since $\beta_{22}^{i}=-2s\gamma_{i}\Delta_{\Gamma}\phi\cdot rD_{i}^{2}\rho$, with
\begin{equation*}
\Delta_{\Gamma}\phi=\sum_{j=1}^{d}\gamma_{j}\partial_{j}^{2}\phi=\lambda^{2}\phi\sum_{j=1}^{d}\gamma_{j}(\partial_{j}\psi)^{2}+\lambda\phi\sum_{j=1}^{d}\gamma_{j}\partial_{j}^{2}\psi,
\end{equation*}
and, by Theorem~\ref{theo:weight:estimates},
\begin{equation*}
rD_{i}^{2}\rho=s^{2}\lambda^{2}\phi^{2}(\partial_{i}\psi)^{2}+s\,\mathcal{O}_{\lambda}(1)+s^{2}\mathcal{O}_{\lambda}((sh)^{2}),
\end{equation*}
we obtain
\begin{equation}\label{eq:beta22:i:estimate}
\begin{aligned}
\beta_{22}^{i}=&\;-2s^{3}\lambda^{4}\gamma_{i}\phi^{3}(\partial_{i}\psi)^{2}\sum_{j=1}^{d}\gamma_{j}(\partial_{j}\psi)^{2}+s^{3}\lambda^{3}\phi^{3}\,\mathcal{O}(1)+s^{2}\mathcal{O}_{\lambda}(1)+s^{3}\mathcal{O}_{\lambda}((sh)^{2}).
\end{aligned}
\end{equation}
Indeed, the leading term arises from multiplying the dominant parts:
\begin{equation*}
-2s\gamma_{i}\,\lambda^{2}\phi\sum_{j=1}^{d}\gamma_{j}(\partial_{j}\psi)^{2}\, s^{2}\lambda^{2}\phi^{2}(\partial_{i}\psi)^{2}=-2s^{3}\lambda^{4}\gamma_{i}\phi^{3}(\partial_{i}\psi)^{2}\sum_{j=1}^{d}\gamma_{j}(\partial_{j}\psi)^{2},
\end{equation*}
while term $s^{3}\lambda^{3}\phi^{3}\,\mathcal{O}(1)$ comes from the cross products
\begin{equation*}
-2s\gamma_{i}\,\lambda\phi\sum_{j=1}^{d}\gamma_{j}\partial_{j}^{2}\psi\, s^{2}\lambda^{2}\phi^{2}(\partial_{i}\psi)^{2}=-2s^{3}\lambda^{3}\gamma_{i}\phi^{3}(\partial_{i}\psi)^{2}\sum_{j=1}^{d}\gamma_{j}\partial_{j}^{2}\psi=s^{3}\lambda^{3}\phi^{3}\,\mathcal{O}(1).
\end{equation*}
The remaining terms $s^{2}\mathcal{O}_{\lambda}(1)$ and $s^{3}\mathcal{O}_{\lambda}((sh)^{2})$ arise from the lower-order contributions in $rD_{i}^{2}\rho$ and the discrete error in the theorem. Consequently, 
\begin{equation*}
\sum_{i=1}^{d}\beta_{22}^{i}=-2s^{3}\lambda^{4}\phi^{3}\bigg(\sum_{i=1}^{d}\gamma_{i}(\partial_{i}\psi)^{2}\bigg)^{2}+s^{3}\lambda^{3}\phi^{3}\,\mathcal{O}(1)+s^{2}\mathcal{O}_{\lambda}(1)+s^{3}\mathcal{O}_{\lambda}((sh)^{2}).
\end{equation*}
Moreover, we also have
\begin{equation*}
D_{i}^{2}\beta_{22}^{i}=s^{3}\mathcal{O}_{\lambda}(1),\qquad A_{i}\beta_{22}^{i}=s^{3}\mathcal{O}_{\lambda}(1)+s\,\mathcal{O}_{\lambda}((sh)^{2}),
\end{equation*}
Thus, by using the above bounds and \eqref{eq:beta22:i:estimate}, $I_{22}$ can be estimated as 
\begin{equation}\label{eq:I22:final}
I_{22}=-2\lambda^{4}\int_{\mathcal{M}\times\mathcal{N}^{\ast}}s^{3}\phi^{3}\bigg(\sum_{i=1}^{d}\gamma_{i}(\partial_{i}\psi)^{2}\bigg)^{2}|z|^{2}+X_{22},
\end{equation}
where 
\begin{equation}\label{eq:X22}
\begin{aligned}
X_{22}:=&\int_{\mathcal{M}\times\mathcal{N}^{\ast}}\Big(s^{3}\lambda^{3}\phi^{3}\,\mathcal{O}(1)+s^{2}\mathcal{O}_{\lambda}(1)+s^{3}\mathcal{O}_{\lambda}((sh)^{2})\Big)|z|^{2}\\
&+\int_{\mathcal{M}\times\mathcal{N}^{\ast}}s\,\mathcal{O}_{\lambda}((sh)^{2})|z|^{2}+\sum_{i=1}^{d}\int_{\mathcal{M}_{i}^{\ast}\times\mathcal{N}^{\ast}}s\,\mathcal{O}_{\lambda}((sh)^{2})|D_{i}z|^{2}.
\end{aligned}
\end{equation}
\begin{remark}
The leading term of $I_{22}$ is $\displaystyle -2\lambda^{4}\int s^{3}\phi^{3}\bigg(\sum_{i}\gamma_{i}(\partial_{i}\psi)^{2}\bigg)^{2}|z|^{2}$, which is negative. When combined with the positive leading term of $I_{21}$, namely 
$$3s^{3}\lambda^{4}\int\phi^{3}\bigg(\sum_{i}\gamma_{i}(\partial_{i}\psi)^{2}\bigg)^{2}|z|^{2},$$ the contribution is
\begin{equation*}
I_{21}+I_{22}=s^{3}\lambda^{4}\int_{\mathcal{M}\times\mathcal{N}^{\ast}}\phi^{3}\bigg(\sum_{i=1}^{d}\gamma_{i}(\partial_{i}\psi)^{2}\bigg)^{2}|z|^{2}+\widetilde{W}_{21}+X_{22},
\end{equation*}
which provides the main positive $s^{3}\lambda^{4}|\nabla\psi|^{4}|z|^{2}$ term in the Carleman estimate (when $\gamma_{i}=\gamma$ for all $i$, the squared sum becomes $\gamma^{2}|\nabla\psi|^{4}$). The sub-leading terms in $X_{22}$ are absorbed for $\lambda$ large (the $s^{3}\lambda^{3}$ term) and for $sh$ small (the $(sh)^{2}$ terms).
\end{remark}
\subsection{Estimate of $I_{23}$}
Let us recall that
\begin{equation*}
    A_{2}z:=\sum_{i=1}^{d}\gamma_{i}\trm(rD_{i}^{2}\rho A_{i}^{2}z)\, \text{ and }B_{3}z:=D_{t}z.
\end{equation*}
Then, setting $\beta_{23}:=rD_{i}^{2}\rho$, we have to estimate
\begin{equation}
I_{23}=\sum_{i=1}^{d}\int_{\mathcal{M}\times\mathcal{N}}\gamma_{i}\trm(\beta_{23}A_{i}^{2}z)\,D_{t}z=:\sum_{i=1}^{d}I^{i}_{23}.
\end{equation}
Using the identity $A_{i}^{2}z=z+\frac{h^{2}}{4}D_{i}^{2}z$, $I^{i}_{23}$ can be rewritten as
\begin{equation}
\begin{split}
    I^{i}_{23}=&\int_{\mathcal{M}\times\mathcal{N}}\gamma_{i}\trm(\beta_{23})\trm(z)\,D_{t}z+\frac{h^{2}}{4}\int_{\mathcal{M}\times\mathcal{N}}\gamma_{i}\trm(\beta_{23}\,D_{i}^{2}z)\,D_{t}z\\
    =:&I_{23}^{i(a)}+I_{23}^{i(b)}.
    \end{split}
    \end{equation}
\par We note that the identity $\trm z\,D_{t}z=\frac{1}{2}D_{t}(|z|^{2})-\frac{\Delta t}{2}|D_{t}z|^{2}$, for $I_{23}^{i(a)}$, yields 
\begin{equation*}
\begin{split}
    I_{23}^{i(a)}=&\frac{1}{2}\int_{\mathcal{M}\times\mathcal{N}}\gamma_{i}\trm(\beta_{23})\,D_{t}(|z|^{2})-\frac{\Delta t}{2}\int_{\mathcal{M}\times\mathcal{N}}\gamma_{i}\trm(\beta_{23})\,|D_{t}z|^{2}.
    \end{split}
    \end{equation*}
A discrete integration by parts with respect to the time difference operator $D_{t}$, \cite[Equation (2.9) ]{LMPZ:2023} enables us to write
\begin{equation*}
\begin{split}
    I_{23}^{i(a)}=&-\frac{1}{2}\int_{\mathcal{M}\times\mathcal{N}}\gamma_{i}D_{t}(\beta_{23})\trp(|z|^{2})+\frac{1}{2}\int_{\mathcal{M}\times\partial\mathcal{N}}\gamma_{i}\trp(\beta_{23}|z|^{2})\,n\\
    &-\frac{\Delta t}{2}\int_{\mathcal{M}\times\mathcal{N}}\gamma_{i}\trm(\beta_{23})\,|D_{t}z|^{2}.
\end{split}
\end{equation*}
Thus, by using Theorem \ref{theo:fully:weight:estimates} and using the inequality $|\trp(|z|^{2})|\leq 2\trm(|z|^{2})+2(\Delta t)^{2}|D_{t}z|^{2}$, for $I_{23}^{i(a)}$ we obtain
\begin{equation}\label{ine:I:23:a}
    \begin{aligned}
    I_{23}^{i(a)}=&-\int_{\mathcal{M}\times\mathcal{N}}\trm\mu\,\trm(|z|^{2})-(\Delta t)^{2}\int_{\mathcal{M}\times\mathcal{N}}\trm(Ts^{2}\theta)|D_{t}z|^{2}\\
    &+\int_{\mathcal{M}\times\partial\mathcal{N}}\mathcal{O}_{\lambda}(1)\trp(s^{2}|z|^{2})\,n-\Delta t\int_{\mathcal{M}\times\mathcal{N}}\trm(s^{2})\mathcal{O}_{\lambda}(1)\,|D_{t}z|^{2},
    \end{aligned}
\end{equation}
where $\mu:=Ts^{2}\theta\mathcal{O}_{\lambda}(1)+\left(\frac{\tau^{2}\Delta t}{\delta^{4}T^{6}}\right)\mathcal{O}_{\lambda}(1)+\left(\frac{\tau \Delta t}{\delta^{3}T^{4}}\right)\left(\frac{\tau h}{\delta T^{2}}\right)^{2}\mathcal{O}_{\lambda}(1)$.
\par Let us now estimate $I_{23}^{i(b)}$. Using a discrete integral by parts for the operator $D_{i}$, and $D_{t}z=0$ on $\partial\mathcal{M}$, we obtain
\begin{equation*}
    I_{23}^{(b)}=-\frac{h^{2}}{4}\int_{\mathcal{M}_{i}^{\ast}\times\mathcal{N}}D_{i}(\gamma_{i}\trm\beta_{23}D_{t}z)\trm(D_{i}z).
\end{equation*}
By virtue of the product rule for the difference operator $D_{i}$ we write
\begin{equation*}
\begin{split}
    I_{23}^{(b)}=&-\frac{h^{2}}{4}\int_{\mathcal{M}_{i}^{\ast}\times\mathcal{N}}\trm(D_{i}(\gamma_{i}\beta_{23}))A_{i}D_{t}z\,\trm(D_{i}z)-\frac{h^{2}}{4}\int_{\mathcal{M}_{i}^{\ast}\times\mathcal{N}}\trm(A_{i}(\gamma_{i}\beta_{23}))D^{2}_{it}z\,\trm(D_{i}z)\\
    :=&I_{23}^{(b_{1})}+I_{23}^{(b_{2})}.
    \end{split}
\end{equation*}
We note that $D_{it}^{2}z\,\trm(D_{i}z)=\frac{1}{2}D_{t}(|D_{i}z|^{2})-\frac{\Delta t}{2}|D^{2}_{ti}z|^{2}$, thanks to the identity $\trm z\,D_{t}z=\frac{1}{2}D_{t}(|z|^{2})-\frac{\Delta t}{2}|D_{t}z|^{2}$. Then, $I_{23}^{(b_{2})}$ can be rewritten as
\begin{equation}
\begin{aligned}
    I_{23}^{(b_{2})}=&-\frac{h^{2}}{8}\int_{\mathcal{M}_{i}^{\ast}\times\mathcal{N}}\trm(A_{i}(\gamma_{i}\beta_{23}))D_{t}(|D_{i}z|^{2})+\frac{\Delta t\,h^{2}}{8}\int_{\mathcal{M}_{i}^{\ast}\times\mathcal{N}}\trm(A_{i}(\gamma_{i}\beta_{23})) 
    |D^{2}_{ti}z|^{2}.
    \end{aligned}
\end{equation}
Applying a discrete integration by parts for the discrete difference time operator $D_{t}$ on the first integral above leads to
\begin{equation}
\begin{split}
    I_{23}^{(b_{2})}=&\frac{h^{2}}{8}\int_{\mathcal{M}^{\ast}\times\mathcal{N}}A_{i}D_{t}(\gamma_{i}\beta_{23})\trp(|D_{i}z|^{2})-\frac{h^{2}}{8}\int_{\mathcal{M}^{\ast}\times\partial\mathcal{N}}\trp(A_{i}(\gamma_{i}\beta_{23})|D_{i}z|^{2})\,n\\
    &+\frac{\Delta th^{2}}{8}\int_{\mathcal{M}_{i}^{\ast}\times\mathcal{N}}\trm(A_{i}(\gamma_{i}\beta_{23}))\,|D^{2}_{ti}z|^{2}.
    \end{split}
\end{equation}
We note that using the estimates Theorem \ref{theo:fully:weight:estimates} and the estimates for the spatial variable it follows that
\begin{equation}\label{eq:I_{23}:}
    \begin{split}
        A_{i}D_{t}(\gamma_{i}\beta_{23})&=T\trm(s^{2}\theta)\mathcal{O}_{\lambda}(1)+\frac{\tau^{2}\Delta t}{\delta^{4} T^{6}}\mathcal{O}_{\lambda}(1)+\left( \frac{\tau \Delta t}{\delta^{3}T^{4}}\right)\left( \frac{\tau h}{\delta T^{2}}\right)^{2}\mathcal{O}_{\lambda}(1),\\
        A_{i}(\gamma_{i}\beta_{23})&=s^{2}\mathcal{O}_{\lambda}(1).
    \end{split}
\end{equation}
Combining the previous estimates and \eqref{eq:I_{23}:}, and using $\trp(|D_{i}z|^{2})\leq 2\trm(|D_{i}z|^{2})+2(\Delta t)^{2}|D_{ti}^{2}z|^{2}$, we have $I_{23}^{(b_{2})}$ can be estimated as 
\begin{equation}\label{ine:I:23:b2}
\begin{aligned}
|I_{23}^{(b_{2})}|\leq&\int_{\mathcal{M}^{\ast}\times\mathcal{N}}T\trm(\theta (sh)^{2})\trm(|D_{i}z|^{2})+(\Delta t)^{2}\int_{\mathcal{M}^{\ast}\times\mathcal{N}}T\trm(\theta (sh)^{2})|D^{2}_{it}z|^{2})\\
&+h^{2}\int_{\mathcal{M}^{\ast}\times\partial\mathcal{N}}\frac{\tau^{2}\Delta t}{\delta^{4} T^{6}}\mathcal{O}_{\lambda}(1)\trp(|D_{i}z|^{2})+\Delta th^{2}\int_{\mathcal{M}\times\mathcal{N}}\mathcal{O}_{\lambda}(1)\trm(s^{2})\,|D^{2}_{ti}z|^{2}\\
&+h^{2}\int_{\mathcal{M}^{\ast}\times \partial \mathcal{N}}\left( \frac{\tau \Delta t}{\delta^{3}T^{4}}\right)\left( \frac{\tau h}{\delta T^{2}}\right)^{2}\mathcal{O}_{\lambda}(1)\trp(|D_{i}z|^{2}).
    \end{aligned}
\end{equation}
\par On the other hand, we focus now on $I_{23}^{(b_{1})}$. We note that Young's inequality yields
\begin{equation}
    |I_{23}^{(b_{1})}|\leq C \int_{\mathcal{M}^{\ast}\times\mathcal{N}}h^{2}|\trm(s^{-1}D_{i}(\gamma_{i}\beta_{23}))|A_{i}D_{t}z|^{2}+\int_{\mathcal{M}^{\ast}\times\mathcal{N}}h^{2}|\trm(D_{i}(\gamma_{i}\beta_{23}))|\trm(s|D_{i}z|^{2}).
\end{equation}
Using that $|A_{i}D_{t}z|^{2}\leq A_{i}(|D_{t}z|^{2})$ on the first integral from the above expression and then a discrete integration by part for the average operator $A_{i}$ we have
\begin{equation}
\begin{split}
    |I_{23}^{(b_{1})}|\leq &C \int_{\mathcal{M}\times\mathcal{N}}h^{2}\trm(
    s^{-1}|A_{i}D_{i}(\gamma_{i}\beta_{23})|)\,|D_{t}z|^{2}+\int_{\mathcal{M}^{\ast}\times\mathcal{N}}h^{2}|\trm(D_{i}\alpha_{23})|\trm(s|D_{i}z|^{2}),
    \end{split}
\end{equation}
where we have used $D_{t}z=0$ on $\partial_{i}\mathcal{M}$. Finally, using Theorem \ref{theo:fully:weight:estimates}, $I_{23}^{(b_{1})}$ can be estimated as follows
\begin{equation}\label{ine:I:23:b1}
    |I_{23}^{(b_{1})}|\leq C \int_{\mathcal{M}\times\mathcal{N}}s^{-1}\mathcal{O}_{\lambda}((sh)^{2})\,|D_{t}z|^{2}+\int_{\mathcal{M}^{\ast}\times\mathcal{N}}\mathcal{O}_{\lambda}((sh)^{2})\trm(s|D_{i}z|^{2}).
\end{equation}
Thus, combining estimates \eqref{ine:I:23:a}, \eqref{ine:I:23:b1} and \eqref{ine:I:23:b2} we get
\begin{equation}\label{eq:I23:final}
\begin{aligned}
I_{23}^{i}=&-\int_{\mathcal{M}\times\mathcal{N}}\gamma_{i}\mu\,\trm(|z|^{2})+X_{23}^{i}+Y_{23}^{i},
\end{aligned}
\end{equation}
where $\mu:=Ts^{2}\theta\,\mathcal{O}_{\lambda}(1)+\frac{\tau^{2}\Delta t}{\delta^{4}T^{6}}\,\mathcal{O}_{\lambda}(1)+\frac{\tau^{4}h^{2}\Delta t}{\delta^{6}T^{10}}\,\mathcal{O}_{\lambda}(1)$, the volume remainder $\mathcal{R}_{23}^{i}$ satisfies
\begin{equation}\label{eq:R23}
\begin{aligned}
X_{23}^{i}:=&-\bigg[(\Delta t)^{2}Ts^{2}\theta\,\mathcal{O}_{\lambda}(1)+\Delta t\,s^{2}\,\mathcal{O}_{\lambda}(1)+s^{-1}\mathcal{O}_{\lambda}((sh)^{2})\bigg]\int_{\mathcal{M}\times\mathcal{N}}|D_{t}z|^{2}\\
&+\bigg[T\theta(sh)^{2}\,\mathcal{O}_{\lambda}(1)+\mathcal{O}_{\lambda}((sh)^{2})\,s\bigg]\int_{\mathcal{M}_{i}^{\ast}\times\mathcal{N}}\trm(|D_{i}z|^{2})\\
&+(\Delta t)^{2}T\theta(sh)^{2}\,\mathcal{O}_{\lambda}(1)\int_{\mathcal{M}_{i}^{\ast}\times\mathcal{N}}|D_{ti}^{2}z|^{2}\\
&+\Delta t\,h^{2}\,s^{2}\,\mathcal{O}_{\lambda}(1)\int_{\mathcal{M}_{i}^{\ast}\times\mathcal{N}}\trm(|D_{ti}^{2}z|^{2}),
\end{aligned}
\end{equation}
and the boundary remainder $Y_{23}^{i}$ collects the contributions on $\partial\mathcal{N}$:
\begin{equation}\label{eq:B23}
\begin{aligned}
Y_{23}^{i}:=&\;\frac{1}{2}\int_{\mathcal{M}\times\partial\mathcal{N}}\gamma_{i}\,\mathcal{O}_{\lambda}(1)\,\trp(s^{2}|z|^{2})\,n-\frac{h^{2}}{8}\int_{\mathcal{M}_{i}^{\ast}\times\partial\mathcal{N}}\trp(A_{i}(\gamma_{i}\beta_{23})\,|D_{i}z|^{2})\,n.
\end{aligned}
\end{equation}
\par Summing over $i=1,\ldots,d$, the full estimate for $I_{23}$ reads

\begin{equation}\label{eq:I23:total}
\begin{aligned}
I_{23}=&-\sum_{i=1}^{d}\int_{\mathcal{M}\times\mathcal{N}}\gamma_{i}\mu\,\trm(|z|^{2})+\sum_{i=1}^{d}X_{23}^{i}+\sum_{i=1}^{d}Y_{23}^{i}.
\end{aligned}
\end{equation}
\begin{remark}
The terms in $X_{23}^{i}$ have the following roles in the Carleman estimate:
\begin{itemize}
    \item The $|D_{t}z|^{2}$ terms: the leading contribution $\Delta t\,s^{2}\,\mathcal{O}_{\lambda}(1)$ must be absorbed by the Carleman term $\frac{1}{s}\int|D_{t}z|^{2}$. The $(\Delta t)^{2}Ts^{2}\theta$ and $s^{-1}(sh)^{2}$ terms are lower order.
    \item The $\trm(|D_{i}z|^{2})$ terms: the leading contribution $T\theta(sh)^{2}$ is absorbed for $sh$ small, and $s(sh)^{2}$ is lower order.
    \item The $|D_{ti}^{2}z|^{2}$ terms: both carry factors of $(\Delta t)^{2}$ or $\Delta t\,h^{2}$, making them lower order.
\end{itemize}
\end{remark}

\subsection{Estimate of $I_{31}$}
\begin{lemma}\label{I31}
Provided $\Delta t \tau (T^3 \delta^2)^{-1}\leq 1$ and $\tau h(\delta T^{2})^{-1}\leq 1$, we have
\begin{equation*}
\begin{split}
    I_{31}\geq &X_{31}+Y_{31}
    ,
    \end{split}
\end{equation*}
where $\displaystyle X_{31}:=\int_{\mathcal{M}\times\mathcal{N}^{\ast}}\lambda\varphi\partial_{t}\theta s\mathcal{O}_{\lambda}(1)\,|z|^{2}+\int_{\mathcal{M}^{\ast}\times\mathcal{N}^{\ast}}s\mathcal{O}_{\lambda}(1)\,|D_{h}z|^{2}$ and
\begin{equation*}
    Y_{31}:=-h\int_{\partial \mathcal{M}\times\mathcal{N}^{\ast}}\lambda\varphi\partial_{t}\theta s\mathcal{O}_{\lambda}(1)t_{r}(|D_{h}z|^{2}).
\end{equation*}
\end{lemma}
Recall that 
$A_{3}z=-\tau\varphi\trm(\theta'z)$ and $B_{1}z=2\sum_{i=1}^{d}\gamma_{i}\trm(rD_{i}A_{i}\rho D_{i}A_{i}z)$. Let us set $\alpha_{31}:=-\tau\varphi\theta'\gamma_{i}rD_{i}A_{i}\rho$. Then, shifting the time variable for $I_{31}$ yields $$\displaystyle I_{31}=\sum_{i=1}^{d}\int_{\mathcal{M}\times\mathcal{N}^{\ast}}2\alpha_{31}z\,D_{i}A_{i}z.$$ A discrete integration by parts with respect to the average operator $A_{i}$ leads to
\begin{equation*}
I^{i}_{31}=\int_{\mathcal{M}_{i}^{\ast}\times\mathcal{N}^{\ast}}2A_{i}(\alpha_{31}z)D_{i}z.
\end{equation*}
From \cite[Lemma 2.1, equation (13)]{LDOP-2021} and the identity $2D_{i}zA_{i}z=D_{i}(|z|^{2})$, for $I_{31}^{(a)}$, we have
\begin{equation*}
\begin{split}
    I_{31}^{i}=&\int_{\mathcal{M}_{i}^{\ast}\times\mathcal{N}^{\ast}}A_{i}(\alpha_{31})\,D_{i}(|z|^{2})+\frac{h^{2}}{2}\int_{\mathcal{M}_{i}^{\ast}\times\mathcal{N}^{\ast}}D_{i}(\alpha_{31})\,|D_{i}z|^{2}.
    \end{split}
\end{equation*}
A discrete integration by parts with respect to the difference operator $D_{i}$ yields
\begin{equation*}
\begin{split}
    I_{31}^{(i)}=&-\int_{\mathcal{M}\times\mathcal{N}^{\ast}}D_{i}A_{i}(\alpha_{31})\,|z|^{2}+\frac{h^{2}}{2}\int_{\mathcal{M}_{i}^{\ast}\times\mathcal{N}^{\ast}}D_{i}(\alpha_{31})\,|D_{i}z|^{2},
    \end{split}
\end{equation*}
since $z=0$ on $\partial_{i}\mathcal{M}$ for $i=1,\ldots, d$. We note that $e^{s\varphi}\partial_{t}(e^{-s\varphi})=-\tau\varphi\partial_{t}\theta$ and by Theorem \ref{theo:weight:estimates} we have $rA_{i}D_{i}\rho=s\mathcal{O}_{\lambda}(1)$. Thus, for $I_{31}^{i}$ we have the following estimate
\begin{equation}
\begin{split}
I_{31}^{(i)}=&\int_{\mathcal{M}\times\mathcal{N}^{\ast}}Ts^{2}\theta\mathcal{O}_{\lambda}(1)\,|z|^{2}+\int_{\mathcal{M}^{\ast}\times\mathcal{N}^{\ast}}T\theta\mathcal{O}_{\lambda}((sh)^{2})\,|D_{i}z|^{2}.
    \end{split}
\end{equation}
Therefore, $I_{31}$ can be estimated as 
\begin{equation*}
I_{31}=\int_{\mathcal{M}\times\mathcal{N}^{\ast}}dTs^{2}\theta\mathcal{O}_{\lambda}(1)\,|z|^{2}+\sum_{i=1}^{d}\int_{\mathcal{M}_{i}^{\ast}\times\mathcal{N}^{\ast}}T\theta\mathcal{O}_{\lambda}((sh)^{2})\,|D_{i}z|^{2}.
\end{equation*}
\subsection{Estimate of $I_{32}$}
Let us set $\alpha_{32}:=2s\Delta_{\Gamma}\phi\tau\varphi\theta'$. We have shifting the discrete time variable $I_{32}$ is given by $\displaystyle I_{32}=\int_{\mathcal{M}\times\mathcal{N}^{\ast}}\alpha_{32}|z|^{2}$. Then, it follows that 
$$I_{32}\geq -\int_{\mathcal{M}\times\mathcal{N}^{\ast}}Ts^{2}\theta\mathcal{O}_{\lambda}(1)|z|^{2},$$
since $|\theta'|\leq CT\theta^{2}$, $\left\| \Delta_{\Gamma}\phi\right\|_{L^{\infty}_{h}(\mathcal{M}\times\mathcal{N}^{\ast})}\leq \mathcal{O}_{\lambda}(1)$ and $\left\| \varphi\right\|_{C(\Omega)}=\mathcal{O}_{\lambda}(1)$.
\subsection{Estimate of $I_{33}$}
We have to estimate
$\displaystyle I_{33}:=-\int_{\mathcal{M}\times\mathcal{N}}\tau\varphi\trm(\theta'\,z)D_{t}z$. Note that by using the identity $\trm(z)D_{t}z=\frac{1}{2}D_{t}(|z|^{2})-\frac{\Delta t}{2}|D_{t}z|^{2}$, $I_{33}$ can be rewritten as 
\begin{equation*}
I_{33}=-\frac{1}{2}\int_{\mathcal{M}\times\mathcal{N}}\tau\varphi\trm(\theta')\,D_{t}(|z|^{2})+\frac{\Delta t}{2}\int_{\mathcal{M}\times\mathcal{N}}\tau\varphi\trm(\theta')|D_{t}z|^{2}.
\end{equation*}
Now, using \cite[Equation (2.9)]{LMPZ:2023} on the first integral above we have 
\begin{equation*}
\begin{split}
    I_{33}=&\frac{1}{2}\int_{\mathcal{M}\times\mathcal{N}}\tau\varphi\,D_{t}(\theta')\,\trp(|z|^{2})-\frac{1}{2}\int_{\mathcal{M}\times\partial \mathcal{N}}\tau\varphi\trp(|z|^{2})\trp(\theta')n+\frac{\Delta t}{2}\int_{\mathcal{M}\times\mathcal{N}}\tau\varphi\trm(\theta')|D_{t}z|^{2}.
    \end{split}
\end{equation*}
We note that $\trp(\theta')\,n>0$ on $\mathcal{M}\times\partial\mathcal{N}$ since $\theta'=(2t-T)\theta^{2}$. Moreover, thanks to $\varphi<0$ in $\mathcal{M}$, it follows that
\begin{equation}
-\frac{1}{2}\int_{\mathcal{M}\times\partial\mathcal{N}}\tau\varphi\trp(|z|^{2})\trp(\theta')\,n>0.
\end{equation}
Thus, we can drop the boundary terms and $I_{33}$ can be estimated as
\begin{equation*}
    I_{33}\geq\frac{1}{2}\int_{\mathcal{M}\times\mathcal{N}}\tau\varphi\,D_{t}(\theta')\,\trp(|z|^{2})+\frac{\Delta t}{2}\int_{\mathcal{M}\times\mathcal{N}^{\ast}}\tau\varphi\trm(\theta')|D_{t}z|^{2}.
\end{equation*}    
Now, let us focus on the integral with the term $|\trp z|^{2}$. Using that $|\trp z|^{2}\leq C|\trm z|^{2}+C(\Delta t)^{2}|D_{t}z|^{2}$ and Theorem \ref{theo:fully:weight:estimates} we obtain
\begin{equation*}
    \begin{split}
        I_{33}\geq &-\int_{\mathcal{M}\times\mathcal{N}}\trm(\mu_{33})|\trm z|^{2}-\int_{\mathcal{M}\times\mathcal{N}}(\Delta t)^{2}\trm(\mu_{23})|D_{t}z|^{2}-\int_{\mathcal{M}\times\mathcal{N}}\trm(\gamma_{23})|D_{t}z|^{2},
    \end{split}
\end{equation*}
with $\mu_{33}:=(\tau T^{2}\theta^{3}+\frac{\tau \Delta t}{\delta^{4}T^{5}})\mathcal{O}_{\lambda}(1)$ and $\gamma_{33}:=\Delta t \tau T\theta^{2}\mathcal{O}_{\lambda}(1)$. We can shift the integral with the term $|\trm z|^{2}$.

\bibliographystyle{abbrv}
\bibliography{references}
\end{document}